\numberwithin{equation}{section}
\theoremstyle{plain}
\newtheorem{theorem}{Theorem}[section]
\newtheorem{lemma}[theorem]{Lemma}
\newtheorem{proposition}[theorem]{Proposition}
\newtheorem{corollary}[theorem]{Corollary}
\theoremstyle{definition}
\newtheorem{definition}[theorem]{Definition}
\newtheorem{remark}[theorem]{Remark}
\newtheorem{example}[theorem]{Example}
\DeclareMathOperator{\id}{Id}
\renewcommand{\phi}{\varphi}
\renewcommand{\epsilon}{\varepsilon}
\renewcommand{\hom}{\text{Hom}}
\DeclareMathOperator{\con}{Con}
\newcommand{\mb}[1]{\mathbf{#1}}
\renewcommand{\vec}[1]{\mathbf{#1}}
\newcommand{\bb}[1]{\mathbb{#1}}
\newcommand{\mrm}[1]{\mathrm{#1}}
\newcommand{\mc}[1]{\mathcal{#1}}
\newcommand{\mt}[1]{\mathtt{#1}}
\newcommand{\A}{\mb{A}}
\newcommand{\B}{\mb{B}}
\newcommand{\C}{\mb{C}}
\newcommand{\D}{\mb{D}}
\newcommand{\F}{\mb{F}}
\newcommand{\I}{\mb{I}}
\DeclareMathOperator{\K}{\mc{K}^{\circ}}
\newcommand{\N}{\mb{N}}
\newcommand{\n}{\mb{n}}
\newcommand{\U}{\mb{U}}
\renewcommand{\P}{\mb{P}}
\newcommand{\Q}{\mb{Q}}
\newcommand{\X}{\mrm{X}}
\newcommand{\x}{\vec{x}}
\renewcommand{\u}{\vec{u}}
\newcommand{\w}{\vec{w}}
\newcommand{\0}{\mb{0}}
\newcommand{\1}{\mb{1}}
\newcommand{\bdot}{\boldsymbol{\cdot}}
\DeclareMathOperator{\ch}{Ch}
\renewcommand{\emptyset}{\varnothing}
\newcommand{\meet}{\wedge}
\newcommand{\join}{\vee}
\providecommand*{\cupdot}{%
  \mathbin{%
    \mathpalette\@cupdot{}%
  }%
}
\newcommand*{\@cupdot}[2]{%
  \ooalign{%
    $\m@th#1\sqcup$\cr
    \sbox0{$#1\sqcup$}%
    \dimen@=\ht0 %
    \sbox0{$\m@th#1\cdot$}%
    \advance\dimen@ by -\ht0 %
    \dimen@=.5\dimen@
    \hidewidth\raise\dimen@\box0\hidewidth
  }%
}
\providecommand*{\bigcupdot}{%
  \mathop{%
    \vphantom{\bigsqcup}%
    \mathpalette\@bigcupdot{}%
  }%
}
\newcommand*{\@bigcupdot}[2]{%
  \ooalign{%
    $\m@th#1\bigsqcup$\cr
    \sbox0{$#1\bigsqcup$}%
    \dimen@=\ht0 %
    \advance\dimen@ by -\dp0 %
    \sbox0{\scalebox{2}{$\m@th#1\cdot$}}%
    \advance\dimen@ by -\ht0 %
    \dimen@=.5\dimen@
    \hidewidth\raise\dimen@\box0\hidewidth
  }%
}
\begin{document}

\title[Spectral properties of cBCK-algebras]{Spectral properties of cBCK-algebras}

\author[C. M. Evans]{C. Matthew Evans}
\address{Mathematics Department\\
Oberlin College\\Oberlin, OH 44074\\USA}
\urladdr{https://sites.google.com/view/mattevans}
\email{mevans4@oberlin.edu}

\subjclass{06F35, 08A30, 54H10}

\keywords{BCK-algebra, Generalized spectral space, Distributive lattice}

\begin{abstract}
In this paper we study prime spectra of commutative BCK-algebras. We give a new construction for commutative BCK-algebras using rooted trees, and determine both the ideal lattice and prime ideal lattice of such algebras. We prove that the spectrum of any commutative BCK-algebra is a locally compact generalized spectral space which is compact if and only if the algebra is finitely generated as an ideal. Further, we show that if a commutative BCK-algebra is involutory, then its spectrum is a Priestley space. Finally, we consider the functorial properties of the spectrum and define a functor from the category of commutative BCK-algebras to the category of distributive lattices with zero. We give a partial answer to the question: what distributive lattices lie in the image of this functor?
\end{abstract}

\maketitle

\section{Introduction}\label{sec:intro}

The class of BCK-algebras was introduced in 1966 by Imai and Is\'{e}ki \cite{II66} as the algebraic semantics for a non-classical logic having only implication. This implicational calculus is evidently due to Tarski and Bernays, but Is\'{e}ki also credits Meredith in \cite{iseki66}. The origin of the terms B, C, and K is the combinatory logic of Sch\"{o}nfinkel \cite{schon24} and Curry \cite{curry30} from the 1920's and 1930's.

The class $\mt{BCK}$ of BCK-algebras is not a variety (\cite{wronski83}), but many subclasses do form varieties. In this paper we focus on the variety of commutative BCK-algebras, denoted $\mt{cBCK}$, which has ties to many other algebraic structures including MV-algebras, lattice-ordered Abelian groups, BCI-algebras, AF $C^\ast$-algebras, \L ukasiewicz algebras, commutative integral residuated lattices, and others. We note, for exmaple, that the variety of bounded commutative BCK-algebras is term-equivalent to the variety of MV-algebras \cite{mundici86}.

The core of this paper deals with a topological representation for commutative BCK-algebras. The idea of representing an algebraic structure with a topological space dates back to Stone's pioneering work \cite{stone36} which provides a dual equivalence between the category of Boolean algebras, $\mt{BA}$, and the category of Stones spaces, $\mt{Stone}$. This was later extended in \cite{pries70} to a dual equivalence between $\mt{BDL}$, the category of bounded distributive lattices, and the category $\mt{Pries}$ of Priestley spaces; as is well-known these two equivalences both arise as natural dualities (\cite{clarkdavey98}).

One may wonder whether this type of natural duality is possible for $\mt{cBCK}$. The representation we develop here will not lead to a dual equivalence: many non-isomorphic algebras will have the same spectrum (up to homeomorphism). Further, in \cite{niederkorn00}, Niederkorn showed that the variety $\mt{bcBCK}$ of bounded commutative BCK-algebras is not dualisable. The signature for the variety $\mt{bcBCK}$ is not the same as that of $\mt{cBCK}$, so this does not rule out the possibility that $\mt{cBCK}$ could be dualisable. While we would conjecture that $\mt{cBCK}$ is not dualisable, to the author's knowledge this is an open problem. However, the representing spaces are still interesting objects in their own right, and point toward some interesting connections between commutative BCK-algebras and commutative rings.

The organization of this paper is as follows: in section 2 we give an overview of the necessary background of commutative BCK-algebras and the basics of their ideal theory. We also provide some examples that will be used throughout the paper.

In section 3 we describe two constructions for building commutative BCK-algebras, and characterize both their ideal lattices and prime ideal lattices. The first construction is a disjoint union-type construction that seems to have first appeared in \cite{it76}. Despite this being a known construction, the characterization of the prime ideal lattices is new. The second construction involves the use of rooted trees to define commutative BCK-algebras, and this construction is new. We also show in this section that any finite subdirectly irreducible distributive p-algebra occurs as the ideal lattice of a commutative BCK-algebra.

In section 4 we define the spectrum of a commutative BCK-algebra and consider its topological properties. In particular, we show that the spectrum of a commutative BCK-algebra is a locally compact generalized spectral space which is compact if and only if the algebra is finitely generated as an ideal. We also show that when the algebra is involutory, the spectrum is a Priestley space. 

In section 5 we discuss the functoriality of the spectrum and define a functor $\K\X$ from $\mt{cBCK}$ to $\mt{DL_0}$, the category of distributive lattices with 0. By focusing our attention on Noetherian spectra, we give a partial answer to the question: what lattices lie in the image of $\K\X$? In giving this partial answer, we are finding distributive lattices that occur both as the lattices of compact open subsets of the spectra of some cBCK-algebras, and as the ideal lattices of some cBCK-algebras.

We note that this paper is an adaptation of the author's dissertation \cite{evans20}, and that the results of this paper are contained in \cite{evans20} in some form, with the exception of section \ref{disjoint union in gspec}.

\section{Preliminaries}

\begin{definition} A \textit{commutative BCK-algebra} is an algebra $\langle A; \bdot, 0\rangle$ of type $(2,0)$ such that 
\begin{enumerate}
\item[]\hspace{-1cm} (cBCK1)\; $(x\bdot y)\bdot z=(x\bdot z)\bdot y$
\item[]\hspace{-1cm} (cBCK2)\; $x\bdot(x\bdot y)=y\bdot(y\bdot x)$
\item[]\hspace{-1cm} (cBCK3)\; $x\bdot x=0$
\item[]\hspace{-1cm} (cBCK4)\; $x\bdot 0=x$
\end{enumerate}
for all $x,y,z\in A$.
\end{definition}
Throughout, we will write $\A=\langle A; \bdot, 0\rangle$, and we will refer to commutative BCK-algebras as cBCK-algebras. Denote the variety of cBCK-algebras by $\mt{cBCK}$. If $\A=\langle A; \bdot_A, 0_A\rangle$ and $\B=\langle B; \bdot_B, 0_B\rangle$ are cBCK-algebras, we say a function $h\colon\A\to \B$ is a \textit{BCK-homomorph\-ism} if $h(x\bdot_A y)=h(x)\bdot_B h(y)$ for all $x,y\in A$. We note that any BCK-homomorphism is also 0-preserving due to (cBCK3). The notation $\mt{cBCK}$ will also denote the category with cBCK-algebras as objects and BCK-homomorphisms as morphisms.

For the elementary properties of cBCK-algebras, we point the reader to Is\'{e}ki and Tanaka's introductory papers \cite{it78} and \cite{it76}, Tana\-ka's paper \cite{tanaka75}, Romanowska and Traczyk's paper \cite{rt80}, Traczyk's paper \cite{traczyk79}, Yutani's paper \cite{yutani77}, and the text \cite{mj94} by Meng and Jun.

We collect here a few important properties.

\begin{proposition}[\cite{it78}]\label{basic properties} Let $\A$ be a cBCK-algebra.
\begin{enumerate}
\item $\A$ is partially ordered via: $x\leq y$ if and only if $x\bdot y=0$.
\item $0\bdot x=0$ for all $x\in A$, so $0$ is the least element in $\A$.
\item The operation $\bdot$ is right isotone; that is, if $x\leq y$, then $z\bdot x\leq z\bdot y$.
\item The operation $\bdot$ is left antitone; that is, if $x\leq y$, then $y\bdot z\leq x\bdot z$.
\item The term operation $x\meet y:=y\bdot (y\bdot x)$ is the greatest lower bound of $x$ and $y$.
\item $x\bdot y\leq x$ with equality if and only if $x\meet y=0$.
\item $\A$ is a semilattice with respect to $\meet$.
\end{enumerate}
\end{proposition}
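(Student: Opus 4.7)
The plan is to work through the seven items in an order driven by logical dependencies rather than as listed, using only the four axioms (cBCK1)--(cBCK4). First I would dispose of reflexivity and antisymmetry in (1), saving transitivity for later. Reflexivity is immediate from (cBCK3). For antisymmetry, if $x \bdot y = y \bdot x = 0$ then (cBCK2) and (cBCK4) give
\[ x = x \bdot 0 = x \bdot (x \bdot y) = y \bdot (y \bdot x) = y \bdot 0 = y. \]

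The next and most delicate step is (2), that $0 \bdot x = 0$. The trick is: from (cBCK2), (cBCK3), and (cBCK4),
\[ 0 \bdot (0 \bdot x) = x \bdot (x \bdot 0) = x \bdot x = 0, \]
and then from (cBCK3) and (cBCK1),
\[ 0 = (0 \bdot x) \bdot (0 \bdot x) = (0 \bdot (0 \bdot x)) \bdot x = 0 \bdot x. \]
Once (2) is available, the first half of (6) is one line: $(x \bdot y) \bdot x = (x \bdot x) \bdot y = 0 \bdot y = 0$, so $x \bdot y \le x$.

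The monotonicity claims (3) and (4) then reduce to showing an expression of the form $(\cdot) \bdot (\cdot)$ equals $0$; I would manipulate it using (cBCK1) and (cBCK2) so as to produce a factor of $x \bdot y = 0$ (supplied by the hypothesis $x \le y$) and finish with (2). Transitivity in (1) is an immediate consequence of these. For (5), the formula $x \wedge y = y \bdot (y \bdot x)$ is a lower bound because $(y \bdot (y \bdot x)) \bdot x = (y \bdot x) \bdot (y \bdot x) = 0$ by (cBCK1) and (cBCK3), together with $y \bdot (y \bdot x) \le y$ from (6); it is the greatest lower bound by exploiting the symmetry $y \bdot (y \bdot x) = x \bdot (x \bdot y)$ coming from (cBCK2), combined with the monotonicity results. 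The equality clause in (6) follows from the same symmetry: $x \bdot y = x$ iff $x \bdot (x \bdot y) = 0$ iff $x \wedge y = 0$. Finally, (7) is automatic from (5), since a binary term operation that computes greatest lower bounds makes $\A$ a meet-semilattice.

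The main obstacle is the bootstrap of (2): the standard BCK manipulations all implicitly use $0 \bdot x = 0$, so this identity must be extracted directly from the four axioms, and the argument is not obvious until one spots that combining $(0 \bdot x) \bdot (0 \bdot x) = 0$ with (cBCK1) produces it in two short steps. After that, the rest of the proposition falls out by routine applications of (cBCK1) and (cBCK2).
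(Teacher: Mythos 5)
The paper offers no proof of this proposition---it is quoted from Is\'{e}ki--Tanaka \cite{it78}---so your argument has to stand on its own. The parts you actually write out are correct, and your bootstrap of item (2) is exactly the standard one: $0\bdot(0\bdot x)=x\bdot(x\bdot 0)=x\bdot x=0$ by (cBCK2), (cBCK4), (cBCK3), and then $0=(0\bdot x)\bdot(0\bdot x)=\bigl(0\bdot(0\bdot x)\bigr)\bdot x=0\bdot x$ by (cBCK3) and (cBCK1). Reflexivity, antisymmetry, the first half of (6), the lower-bound half of (5), and (7) all check out as you present them.

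The genuine gap is in (3) and (4), which you leave as a plan rather than a proof---and it is precisely there that the plan would fail, because those two items are false as printed: in $\mb{R}^+$ take $x=0$, $y=1$, $z=1$, so $x\leq y$ but $z\bdot x=1\not\leq 0=z\bdot y$, contradicting (3); and with $z=0$ item (4) would force $y\leq x$ whenever $x\leq y$. The inequalities are reversed, and the provable statements are $z\bdot y\leq z\bdot x$ and $x\bdot z\leq y\bdot z$. No amount of ``manipulating until a factor of $x\bdot y=0$ appears'' can prove the printed versions, so a complete solution must first correct them. Once corrected, your strategy does work, e.g.\ $(z\bdot y)\bdot(z\bdot x)=\bigl(z\bdot(z\bdot x)\bigr)\bdot y=\bigl(x\bdot(x\bdot z)\bigr)\bdot y=(x\bdot y)\bdot(x\bdot z)=0\bdot(x\bdot z)=0$ by (cBCK1), (cBCK2), (cBCK1), the hypothesis, and (2); and from $x\leq y$ one gets $x=x\bdot(x\bdot y)=y\bdot(y\bdot x)$, hence $x\bdot z=(y\bdot z)\bdot(y\bdot x)\leq y\bdot z$ by (cBCK1) and the first half of (6). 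Transitivity then follows as you say. Finally, the greatest-lower-bound half of (5) needs a bit more than ``symmetry plus monotonicity'': for $w\leq x$ and $w\leq y$ write $w=y\bdot(y\bdot w)$ and apply right antitonicity twice, first to get $y\bdot x\leq y\bdot w$ and then $w=y\bdot(y\bdot w)\leq y\bdot(y\bdot x)=x\meet y$.
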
 The identity (cBCK2) tells us $x\meet y=y\meet x$; these algebras are called ``commutative'' because of this.

We say a cBCK-algebra $\A$ is \textit{bounded} if there is an element $1\in A$ such that $x\bdot 1=0$ for all $x\in A$, so $x\leq 1$ for all $x\in A$. The class of bounded cBCK-algebras may be considered as a variety as well; that is, an algebra $\A=\langle A; \bdot, 0, 1\rangle$ of type $(2,0,0)$ is a bounded commutative BCK-algebra if it satisfies (cBCK1)-(cBCK4) as well as $x\bdot 1=0$ for all $x\in A$. This variety will be denoted $\mt{bcBCK}$.

Given a bounded cBCK-algebra $\A$ the term operation \[x\join y := 1\bdot\bigl( (1\bdot x)\meet (1\bdot y)\bigr)\,,\] gives the least upper bound of $x$ and $y$. Is\'{e}ki and Tanaka showed in \cite{it78} that the term-reduct $\A^{\text{d}}=\langle A; \meet, \join\rangle$ is a lattice, while Traczyk showed in \cite{traczyk79} that $\A^\text{d}$ is a distributive lattice. 


\subsection{Examples}

There are many natural examples of cBCK-algebras. We focus our attention on those examples which will be most useful for our purposes.

The set of non-negative reals $\bb{R}_{\geq 0}$ becomes a cBCK-algebra via the operation $x\bdot y=\max\{x-y, 0\}$. We denote this algebra by $\mb{R}^+$. This truncated difference is the prototypical operation for a generic cBCK-algebra.

From this algebra we obtain important subalgebras. The non-negative integers $\bb{N}_0:=\bb{N}\cup \{0\}$ is a cBCK-subalgebra of $\mb{R}^+$ which we will denote $\N_0$.

If we let $I=[0,1]$ denote the unit interval in $\bb{R}$, then we obtain a cBCK-subalgebra $\I$ of $\mb{R}^+$. Putting $Q=I\cap \bb{Q}$, we have another cBCK-subalgebra of $\mb{R}^+$ which we will denote $\Q$.

For $k\in\bb{N}$, let $C_k=\bigl\{0, \frac{1}{k}, \frac{2}{k},\ldots, \frac{k-1}{k}, 1\bigr\}$. This is a cBCK-subalgebra of $\I$ we will denote $\C_k$. In particular, $\C_1$ is just the two-element cBCK-algebra with universe $\{0,1\}$.

\begin{remark} The variety $\mt{bcBCK}$ is generated by $\I$; that is, $\mt{bcBCK}=\text{HSP}(\I)$. This is essentially the content of Chang's Completeness Theorem for many-valued logic, see \cite{chang58} and \cite{chang59}. Chang's proof is in the language of MV-algebras, but Mundici showed in \cite{mundici86} that MV-algebras and bcBCK-algebras are term-equivalent. Further, we also have that $\mt{bcBCK}=\text{HSP}(\C_1, \C_2, \C_3, \ldots)$ (see Proposition 8.1.2 of \cite{CDM00}), but $\mt{bcBCK}$ is not finitely generated (see \cite{cornish80}).
\end{remark}


\subsection{Ideals}

\begin{definition} A subset $I\subseteq A$ of a cBCK-algebra $\A$ is an \textit{ideal} if $0\in I$ and the following implication is satisfied: if $x\bdot y\in I$ and $y\in I$, then $x\in I$.
\end{definition}

Every ideal is a down-set: take $y\in I$ and $x\in \A$ with $x\leq y$. Then $x\bdot y=0\in I$, and since $y\in I$ we must have $x\in I$. 

\begin{definition} Let $\A$ be a cBCK-algebra and $P$ an ideal of $\A$.
\begin{enumerate}
\item We say $P$ is \textit{proper} if $P\neq \A$.
\item We say $P$ is a \textit{prime ideal} if it is proper and $x\meet y\in P$ implies $x\in P$ or $y\in P$.
\item We say $P$ is an \textit{irreducible ideal} if, whenever $I\cap J=P$ for ideals $I$ and $J$, we have $I=P$ or $J=P$.
\item We say $P$ is a \textit{meet-prime ideal} if, whenever $I\cap J\subseteq P$ for ideals $I$ and $J$, we have $I\subseteq P$ or $J\subseteq P$.
\end{enumerate}
\end{definition}

Pa\l asinski proved in \cite{palasinski81} that prime ideals, irreducible ideals, and meet-prime ideals all coincide in a cBCK-algebra.

Given a cBCK-algebra $\A$, we will denote the collection of all ideals of $\A$ by $\id(\A)$ and the collection of all prime ideals of $\A$ by $\X(\A)$. As lattices, $\id(\A)\cong\con(\A)$. A proof can be found in \cite{rt80}, \cite{yutani77}, or \cite{AT77}, but we give the idea here: for $I\in\id(\A)$, define $\theta_I\subseteq A\times A$ by $(x,y)\in \theta_I$ if and only if $x\bdot y\in I$ and $y\bdot x\in I$. The map sending $I\mapsto \theta_I$ is a lattice isomorphism. The inverse is $\theta\mapsto [0]_\theta$, where $[0]_\theta$ is the equivalence class of $0$. 

The lattice $\id(\A$) is also known to be distributive, see Lemma 3.2 of \cite{rt80}. From this and the previous paragraph, it follows that $\mt{cBCK}$ is a congruence-distributive variety. 

Given a subset $S$ of a cBCK-algebra $\A$, the \textit{ideal generated by $S$}, denoted $(S]$, is the smallest ideal containing $S$. Is\'{e}ki and Tanaka provide a very nice characterization of $(S]$.

\begin{theorem}[\cite{it76}, Theorem 3]\label{iseki ideal theorem} Let $S$ be a subset of a cBCK-algebra $\A$. Then $x\in(S]$ if and only if there exist $s_1,\ldots, s_n\in S$ such that 
\[\bigl(\cdots \bigl((x\bdot s_1)\bdot s_2\bigr)\bdot \cdots \bdot s_{n-1}\bigr)\bdot s_n=0\,.\]
\end{theorem}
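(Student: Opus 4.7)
My approach is to define
\[T := \bigl\{x \in A : \text{there exist } s_1, \ldots, s_n \in S \text{ with } (\cdots(x \bdot s_1) \cdots) \bdot s_n = 0 \bigr\}\]
(allowing $n = 0$, in which case the equation reads $x = 0$) and then show $T = (S]$ via the two natural inclusions.

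\emph{Step 1: $T$ is an ideal containing $S$.} For $s \in S$, (cBCK3) gives $s \bdot s = 0$, so $s \in T$, and $0 \in T$ is immediate (either by the $n=0$ convention or by Proposition~\ref{basic properties}(2)). The substantial step is the ideal-closure axiom: suppose $x \bdot y \in T$ and $y \in T$, witnessed respectively by $s_1,\ldots,s_n$ and $t_1,\ldots,t_m$ in $S$. I would apply (cBCK1) repeatedly to the first witness to shuttle the inner $y$ past each $s_i$, obtaining
\[\bigl(\cdots(x \bdot s_1) \cdots \bdot s_n\bigr) \bdot y = 0,\]
i.e.\ $u := (\cdots(x \bdot s_1) \cdots) \bdot s_n \leq y$ in the BCK order. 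Using monotonicity of $\bdot$ in its first argument (a direct consequence of the axioms together with Proposition~\ref{basic properties}), I would left-multiply the inequality $u \leq y$ successively by $t_1, \ldots, t_m$ to get
\[(\cdots(u \bdot t_1) \cdots) \bdot t_m \;\leq\; (\cdots(y \bdot t_1) \cdots) \bdot t_m = 0.\]
Since $0$ is the least element, this forces the left-hand side to be $0$, so the concatenation $s_1,\ldots,s_n,t_1,\ldots,t_m$ witnesses $x \in T$.

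\emph{Step 2: every ideal $I \supseteq S$ contains $T$.} Take $x \in T$ with witnesses $s_1,\ldots,s_n \in S \subseteq I$. Then $\bigl(\cdots(x \bdot s_1) \cdots\bigr) \bdot s_n = 0 \in I$, and since $s_n \in I$ the ideal axiom gives $\bigl(\cdots(x \bdot s_1) \cdots\bigr) \bdot s_{n-1} \in I$. Iterating this reduction $n$ times, peeling off one $s_i$ at each stage, yields $x \in I$.

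The only mildly delicate point is the commutation in Step~1, which is a routine induction on $n$ using (cBCK1) to move the $y$ stepwise from the innermost position out to the outermost; everything else is formal bookkeeping with the ideal axiom and the order relation.
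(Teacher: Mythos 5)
The paper does not actually prove this statement---it is quoted from Is\'{e}ki and Tanaka \cite{it76}---but your argument is correct and is essentially the standard proof: the set $T$ of elements admitting such a witness is an ideal containing $S$ (the closure step working exactly as you describe, shuttling $y$ outward with (cBCK1) and then applying isotonicity of $\bdot$ in its first argument), and $T$ is contained in every ideal containing $S$ by peeling off the $s_i$ one at a time with the ideal axiom. One small caution: the monotonicity you invoke, $u\leq y\Rightarrow u\bdot t\leq y\bdot t$, is indeed a standard BCK fact, but Proposition \ref{basic properties}(3)--(4) as printed in the paper state the reverse inequalities (evidently a typo, since the printed versions already fail in $\mb{R}^+$), so you should derive or cite the correct form rather than lean on those items verbatim.
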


If $S=\{x_1, x_2,\ldots, x_k\}$, we may write $(x_1, x_2, \ldots, x_k]$ rather than $(S]$. In particular, for singleton subsets $\{x\}$ we will write $(x]$.

For any cBCK-algebra $\A$, the subsets $\{0_A\}$ and $A$ are always ideals. If these are the only ideals, we say $\A$ is \textit{simple}.

For $x,y\in\A$, we define the notation $x\bdot y^n$ for $n\in \bb{N}_0$ recursively by
\begin{align*}
x\bdot y^0&=x\\
x\bdot y^n&=\bigl(x\bdot y^{n-1}\bigr)\bdot y\,.
\end{align*} Since $x\bdot y\leq x$, any pair $x,y\in\A$ gives us a decreasing sequence
\[x\bdot y^0\geq x\bdot y^1\geq x\bdot y^2\geq \cdots \geq x\bdot y^n\geq\cdots\,.\]

If the underlying poset of a cBCK-algebra is totally ordered, we will call it a \textit{cBCK-chain}. For example, all of the algebras $\mb{R}^{+}$, $\N_0$, $\I$, $\Q$, and $\C_k$ for $k\in \bb{N}$ are cBCK-chains. 

\begin{proposition} A cBCK-chain is simple if and only if, for any $x,y\in \A$, $y\neq 0$, there is a natural number $n$ such that $x\bdot y^n=0$.
\end{proposition}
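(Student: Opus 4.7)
The plan is to prove both implications by leveraging the characterization of principal ideals from Theorem \ref{iseki ideal theorem}. In fact, neither direction genuinely uses the chain hypothesis, but the condition is most natural in a chain where $x\bdot y^n$ gives a decreasing sequence that either stabilizes or reaches $0$.

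For the forward direction, I would assume $\A$ is simple and fix $y\in A$ with $y\neq 0$. Then the principal ideal $(y]$ contains the nonzero element $y$, so $(y]\neq \{0\}$, and simplicity forces $(y]=A$. Given any $x\in A$, Theorem \ref{iseki ideal theorem} applied to the singleton $S=\{y\}$ produces $s_1,\ldots, s_n\in\{y\}$ with
\[\bigl(\cdots \bigl((x\bdot s_1)\bdot s_2\bigr)\bdot \cdots \bdot s_{n-1}\bigr)\bdot s_n=0\,.\]
Since each $s_i$ must equal $y$, this expression is exactly $x\bdot y^n$, so $x\bdot y^n=0$ as required.

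For the converse, I would assume the stated condition holds and let $I$ be an arbitrary nonzero ideal of $\A$. Pick any $y\in I\setminus\{0\}$. For an arbitrary $x\in A$, choose $n\in\bb{N}$ with $x\bdot y^n=0$, so in particular $x\bdot y^n\in I$. Now observe that $(x\bdot y^{n-1})\bdot y = x\bdot y^n\in I$ and $y\in I$; the defining implication for ideals then gives $x\bdot y^{n-1}\in I$. Iterating this same step $n$ times successively yields $x\bdot y^{n-2}\in I$, then $x\bdot y^{n-3}\in I$, and ultimately $x=x\bdot y^0\in I$. Since $x$ was arbitrary, $I=A$ and $\A$ is simple.

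The only mild subtlety is making sure the iterated unwinding in the converse direction is written cleanly, but each step is just a direct application of the ideal axiom with the terms $a=x\bdot y^{k-1}$ and $b=y$. I would also briefly note that although the proof does not require totally orderedness, the monotone decreasing nature of $x\bdot y^n$ in a chain (guaranteed by Proposition \ref{basic properties}(6)) makes the Archimedean-style condition especially transparent in the chain setting.
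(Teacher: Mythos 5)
Your proof is correct and follows essentially the same route as the paper's: the forward direction via $(y]=\A$ and Theorem \ref{iseki ideal theorem}, and the converse by iterating the ideal axiom on $x\bdot y^n=0\in I$. Your side observation that the chain hypothesis is never actually invoked is accurate — the paper's own argument does not use it either.
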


This result is stated without proof in \cite{rt82}. For the sake of completeness we provide a proof here.

\begin{proof} Assume first that $\A$ is simple, and take $x,y\in \A$ with $y\neq 0$. Consider the ideal $(y]$. By simplicity we must have $(y]=\A$ since $y\neq 0$. But this means $x\in(y]$; therefore, by Theorem \ref{iseki ideal theorem}, there exists $n\in\bb{N}$ such that $x\bdot y^n=0$. 

On the other hand, assume for any pair $x,y\in\A$ with $y\neq 0$ that there exists $n\in\bb{N}$ such that $x\bdot y^n=0$. Let $I$ be a non-zero ideal of $\A$. Take $z\in \A$ and $y\neq 0$ in $I$. By hypothesis there is some $k\in\bb{N}$ such that $z\bdot y^k=0\in I$. Since $y\in I$, we repeatedly apply the ideal property to obtain $z\in I$. Hence, $I=\A$ and $\A$ is simple. 
\end{proof}

From this it follows that $\mb{R}^+$, $\mb{N}_0$, $\I$, $\Q$, and $\C_k$ are all simple. For some cBCK-chains that are not simple, see Examples \ref{chain of length n} and \ref{countable chain}.

We note that the ideals of any cBCK-chain are linearly ordered and the prime ideals of any cBCK-chain are precisely the proper ideals; see Lemmas 2.3.1 and 2.3.2 of \cite{evans20} for proofs.


\subsection{Involutory algebras}

Let $\A=\langle A; \bdot, 0\rangle$ be a cBCK-algebra. 

\begin{definition} For $S\subseteq A$, the \textit{annihilator of $S$} is \[S^\ast:=\{a\in \A\mid a\wedge s=0\text{ for all } s\in S\}\,.\] 
\end{definition} Aslam and Thaheem prove in \cite{AT91} that $(-)^\ast$ is a Galois connection and that $S^\ast$ is an ideal of $\A$.

\begin{definition} We say that an ideal $I$ of $\A$ is \textit{involutory} if $I=I^{\ast\ast}$. We say the algebra $\A$ is \textit{involutory} if every ideal is involutory.
\end{definition}

The zero ideal $\{0\}$ and $\A$ itself are always involutory, and therefore any simple cBCK-algebra is an involutory algebra. We say that $\A$ \textit{satisfies the descending chain condition} if the sequence 
\[x\geq x\bdot y\geq x\bdot y^2\geq \cdots \geq x\bdot y^n\geq \cdots \,.\]
stabilizes for any pair $x,y\in \A$; that is, for each pair $x,y\in \A$ there is some $n\in\bb{N}_0$  such that $x\bdot y^n=x\bdot y^{n+1}$. One can show that $\A$ is involutory if and only if it satisfies the descending chain condition; see Theorem 3.10 of\cite{AT91} together with Theorem 3.3 of \cite{xin01}.

For an integer $n\geq 1$, consider the identity 
\begin{align*}
x\bdot y^n = x\bdot y^{n+1}\tag{$\text{E}_n$}\,.
\end{align*} Any cBCK-algebra $\A$ satisfying ($\text{E}_1$) also must satisfy the identity $x\bdot (y\bdot x)=x$ and is said to be \textit{implicative}. We note also that any bounded implicative BCK-algebra is a Boolean algebra (\cite{it78}). Varieties of BCK-algebras satisying ($\text{E}_n$) are discussed in detail in \cite{cornish80} and \cite{dyrda87}.


\begin{lemma} If a cBCK-algebra $\A$ is finite, locally finite, or satisfies ($\text{E}_n$) for some $n$, then $\A$ is involutory.
\end{lemma}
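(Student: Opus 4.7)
The plan is to invoke the characterization (attributed to Aslam--Thaheem and Xin in the text just above) that $\A$ is involutory if and only if it satisfies the descending chain condition, i.e.\ for every pair $x,y\in\A$ the sequence $x\geq x\bdot y\geq x\bdot y^2\geq\cdots$ eventually stabilizes. Once this reduction is made, the lemma becomes three short observations, one for each hypothesis.

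First, suppose $\A$ is finite. Then the decreasing sequence $(x\bdot y^n)_{n\geq 0}$ lies in the finite poset $A$ and therefore cannot strictly decrease forever, so it must stabilize; hence the DCC holds. Second, suppose $\A$ is locally finite, meaning every finitely generated subalgebra is finite. For any pair $x,y\in\A$, each term $x\bdot y^n$ is built from $x$ and $y$ using only $\bdot$, so the entire sequence lies inside the subalgebra generated by $\{x,y\}$, which is finite by hypothesis; we then conclude as in the finite case. Third, if $\A$ satisfies $(\text{E}_n)$, then by definition $x\bdot y^n=x\bdot y^{n+1}$ holds for all $x,y\in\A$, so the sequence stabilizes by step $n$ and the DCC holds trivially.

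The proof has no real obstacle: the hardest part is simply to notice that each hypothesis forces the sequence $(x\bdot y^n)$ to live in a finite set (or to be eventually constant by fiat in the $(\text{E}_n)$ case), so the work is essentially packaged into the cited equivalence between involutoriness and the DCC. I would present the three cases in a single short paragraph after invoking that equivalence.
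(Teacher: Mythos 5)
Your proposal is correct and matches the paper's own proof essentially verbatim: both reduce to the descending chain condition via the cited Aslam--Thaheem/Xin equivalence and then dispatch the three hypotheses by noting the sequence $(x\bdot y^n)$ stabilizes in each case, with the locally finite case handled by confining the sequence to the finite subalgebra $\langle x,y\rangle$. No gaps.
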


\begin{proof} If $\A$ is finite or satisfies ($\text{E}_n$) for some $n$, clearly any decreasing sequence will stabilize.

Suppose $\A$ is locally finite. Take $x,y\in\A$ and consider $\langle x,y\rangle$, the subalgebra generated by $x$ and $y$. This subalgebra contains the sequence $(x\bdot y^n)_{n\in\bb{N}_0}$, but it is a finitely-generated subalgebra and hence finite. Thus, the sequence $(x\bdot y^n)_{n\in\bb{N}_0}$ must stabilize.

In all three cases, the algebra satisfies the descending chain condition and is therefore involutory.
\end{proof}


\section{Two constructions}

In this section we describe two methods of building cBCK-algebras. For each construction we characterize the ideals and prime ideals.

\subsection{cBCK-unions}\label{unions}

Let $\Lambda$ be an index set and $\bigl\{\A_\lambda\bigr\}_{\lambda\in\Lambda}$ a family of cBCK-algebras. Suppose further that $A_\lambda\cap A_\mu=\{0\}$ for $\lambda\neq \mu$ and let $U$ be the union of the $A_\lambda$'s. We will use the notation $U=\bigcupdot_{\lambda\in\Lambda} A_\lambda$.

Equipping $U$ with the operation
\[x\bdot y=\begin{cases} x\boldsymbol{\cdot}_\lambda y & \text{ if $x,y\in A_\lambda$}\\x & \text{ otherwise}\end{cases}\;,\] where $\bdot_\lambda$ is the BCK-operation in $\A_\lambda$, yields a new cBCK-algebra which we will denote as $\U=\bigcupdot_{\lambda\in\Lambda} \A_\lambda$. We will refer to $\U$ as a \textit{cBCK-union}.

That this construction does indeed yield a cBCK-algebra is proven in \cite{it76} in the case $|\Lambda|=2$. Extending the proof to arbitrary $\Lambda$ is tedious but straightforward; a full proof can be found in the author's dissertation \cite{evans20} (Proposition 2.2.1). Note that if $x\in\A_\lambda$ and $y\in\A_\mu$ with $\lambda\neq \mu$, then $x\meet y=0$ in $\U$.

Ideals and prime ideals in a cBCK-union are very well behaved.

\begin{proposition}[\cite{yutani80}, Propositions 3 and 4]\label{ideals_in_union} A subset $I\subseteq U$ is an ideal of $\U=\bigcupdot_{\lambda\in\Lambda} \A_\lambda$ if and only if $I=\bigcupdot_{\lambda\in\Lambda} I_\lambda$, where $I_\lambda\in\id(\A_\lambda)$. For a given ideal $I$, this decomposition is unique.\end{proposition}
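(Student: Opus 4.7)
The plan is to prove each direction of the biconditional separately, using the assignment $I \mapsto \{I \cap A_\lambda\}_{\lambda \in \Lambda}$ as the candidate decomposition, and then verify uniqueness.

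For the ``only if'' direction, assume $I$ is an ideal of $\U$ and set $I_\lambda := I \cap A_\lambda$. I would first verify that each $I_\lambda$ is an ideal of $\A_\lambda$: the element $0$ lies in $I_\lambda$ since $0 \in I \cap A_\lambda$, and if $x, y \in A_\lambda$ with $y \in I_\lambda$ and $x \bdot_\lambda y \in I_\lambda$, then the equality $x \bdot y = x \bdot_\lambda y$ (which holds because both elements lie in $A_\lambda$) combined with the ideal property of $I$ in $\U$ forces $x \in I \cap A_\lambda = I_\lambda$. The decomposition $I = \bigcupdot_\lambda I_\lambda$ then falls out from $U = \bigcupdot_\lambda A_\lambda$, since each element of $I$ belongs to some $A_\lambda$ and hence to the corresponding $I_\lambda$.

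For the ``if'' direction, assume $I = \bigcupdot_\lambda I_\lambda$ with each $I_\lambda \in \id(\A_\lambda)$. To check the ideal axiom, suppose $y \in I$ and $x \bdot y \in I$; choose $\mu$ so that $y \in I_\mu$ and $\lambda$ so that $x \in A_\lambda$. The piecewise definition of $\bdot$ in $\U$ naturally splits the verification into two cases. If $\lambda = \mu$, then $x \bdot y = x \bdot_\lambda y$ lies in $A_\lambda$, hence in $I \cap A_\lambda = I_\lambda$; applying the ideal property of $I_\lambda$ in $\A_\lambda$ gives $x \in I_\lambda \subseteq I$. If $\lambda \neq \mu$, then by definition $x \bdot y = x$, so $x \in I$ immediately.

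Uniqueness is a short bookkeeping argument: if $I = \bigcupdot_\lambda J_\lambda$ is any decomposition with $J_\lambda \subseteq A_\lambda$, then intersecting with $A_\mu$ and using $A_\lambda \cap A_\mu = \{0\}$ for $\lambda \neq \mu$ forces $J_\mu = I \cap A_\mu$, which matches the $I_\mu$ of the forward direction. The only genuinely subtle point anywhere in the argument is the proper handling of $0$, which belongs to every $A_\lambda$ simultaneously; beyond this, everything reduces to case analysis on the piecewise definition of the cBCK-union operation, so I do not anticipate any real obstacle.
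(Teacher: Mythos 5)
Your proof is correct. The paper itself gives no proof of this proposition---it is quoted directly from Propositions 3 and 4 of \cite{yutani80}---but your argument, decomposing via $I_\lambda = I\cap A_\lambda$ and case-splitting on the piecewise definition of $\bdot$, is the standard one and is sound, including your (correctly flagged) handling of the element $0$, which lands in the first branch of the operation when $y=0$ but still yields $x\bdot 0 = x$.
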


\begin{theorem}\label{primes_in_union} Let $\U=\bigcupdot_{\lambda\in\Lambda}\A_\lambda$. An ideal $P$ of $\U$ is prime if and only if there exists $\mu\in\Lambda$ and $Q\in\mrm{X}(\A_\mu)$ so that \[P=\bigcupdot_{\lambda\in\Lambda}\A_{\lambda,\mu}^Q\,,\] where $\A_{\lambda,\mu}^Q=\left.\begin{cases}\A_\lambda&\text{if $\lambda\neq\mu$}\\Q&\text{if $\lambda=\mu$}\end{cases}\right\}\,.$
\end{theorem}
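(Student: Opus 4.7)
The plan is to use the decomposition from Proposition \ref{ideals_in_union}: any ideal $P$ of $\U$ takes the form $P = \bigcupdot_{\lambda\in\Lambda} P_\lambda$ with $P_\lambda \in \id(\A_\lambda)$, and this decomposition is unique. The statement to prove then reduces to showing that $P$ is prime if and only if exactly one $P_\mu$ is proper in $\A_\mu$ (with the remaining $P_\lambda = \A_\lambda$) and that this proper $P_\mu$ is itself a prime ideal of $\A_\mu$.

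For the forward direction, suppose $P$ is prime and write $P = \bigcupdot_\lambda P_\lambda$. I would first show that at most one $P_\lambda$ can be proper. The key observation, already recorded in the construction, is that in a cBCK-union elements drawn from distinct components meet to zero: if $a\in\A_\mu$ and $b\in\A_\nu$ with $\mu\neq\nu$, then $a\meet b = 0$. Hence if two indices $\mu\neq\nu$ both had proper components, I could choose $a\in\A_\mu\setminus P_\mu$ and $b\in\A_\nu\setminus P_\nu$ and obtain $a\meet b = 0\in P$ while neither $a$ nor $b$ lies in $P$, contradicting primality. Since $P$ is proper, exactly one $P_\mu$ is proper, and the rest are forced to be the full $\A_\lambda$. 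I would then verify that $P_\mu$ is prime in $\A_\mu$: if $a,b\in\A_\mu$ with $a\meet b\in P_\mu$, then (since the meet in $\U$ restricted to $\A_\mu$ agrees with the meet in $\A_\mu$) $a\meet b\in P$, so primality in $\U$ gives $a\in P$ or $b\in P$, and because $a,b\in\A_\mu$ these are equivalent to membership in $P_\mu$.

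For the reverse direction, take $Q\in\X(\A_\mu)$ and set $P = \bigcupdot_\lambda \A_{\lambda,\mu}^Q$. The fact that $P$ is an ideal is immediate from Proposition \ref{ideals_in_union}, and properness follows from $Q\neq \A_\mu$. For primality, I would do a short case analysis on the components containing $x$ and $y$ when $x\meet y\in P$: if $x$ lies in some $\A_\lambda$ with $\lambda\neq\mu$, then $x\in \A_\lambda\subseteq P$ automatically, and similarly for $y$; the only remaining case is $x,y\in\A_\mu$, where $x\meet y$ is computed inside $\A_\mu$ and must lie in $P\cap A_\mu = Q$, so primality of $Q$ in $\A_\mu$ yields $x\in Q\subseteq P$ or $y\in Q\subseteq P$.

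The main conceptual hinge is the identity $a\meet b = 0$ for elements of distinct components, which is precisely what prevents more than one component from being proper in a prime decomposition; everything else is direct bookkeeping with the decomposition guaranteed by Proposition \ref{ideals_in_union}, and I do not anticipate any serious technical obstacle.
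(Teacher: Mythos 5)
Your proposal is correct and follows essentially the same route as the paper: both directions rest on the unique decomposition of ideals from Proposition \ref{ideals_in_union} together with the observation that elements of distinct components meet to $\0$, which forces all but one component of a prime ideal to be the whole algebra. The only cosmetic difference is that the paper argues the ``given form implies prime'' direction by contradiction where you do a direct case analysis, but the content is identical.
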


\begin{proof} First suppose $P=\bigcupdot_{\lambda\in\Lambda}\A_{\lambda,\mu}^Q$ for some $\mu\in\Lambda$ and $Q\in\mrm{X}(\A_\mu)$. By Proposition \ref{ideals_in_union}, we see that $P$ is an ideal of $\U$. Suppose $x\meet y\in P$ but $x,y\notin P$. Then $x,y\in\A_\mu\setminus Q$, but since $Q$ is prime in $\A_\mu$ we must have $x\meet y\notin Q$. This is a contradiction since $x\meet y\notin Q$ implies $x\meet y\notin P$. So $P$ must be prime. 

On the other hand, let $P$ be a prime ideal of $\U$. Then $P=\bigcupdot_{\lambda\in\Lambda} I_\lambda$ for ideals $I_\lambda\in\id(\A_\lambda)$. If there are indices $\alpha\neq\beta$ such that $I_\alpha\neq \A_\alpha$ and $I_\beta\neq \A_\beta$, choose $x\in \A_\alpha\setminus I_\alpha$ and $y\in \A_\beta\setminus I_\beta$. Then $x\meet y=0\in P$, but $x,y\notin P$, a contradiction. So $I_\lambda=\A_\lambda$ for all but at most one index. But prime ideals are proper, so we have $I_\lambda=\A_\lambda$ for all but exactly one index, say $\mu$. We claim that $I_\mu$ is a prime ideal of $\A_\mu$. 

Take $a,b\in\A_\mu\setminus I_\mu$. Then $a,b\notin P$. Since $P$ is prime we have $a\meet b\notin P$. Thus, $a\meet b\notin I_\mu$, meaning $I_\mu$ is prime in $\A_\mu$. Therefore, $P$ is of the desired form.
\end{proof}

We take a moment to consider when a cBCK-union is involutory. In Theorem \ref{invol implies pries} we will see that an algebra $\A$ being involutory gives a great deal of information about its spectrum.

\begin{lemma}\label{ann_of_union} If $I=\bigcupdot_{\lambda\in\Lambda} I_\lambda\in\id(\U)$, then \[I^\ast=\bigl(\,\bigcupdot_{\lambda\in\Lambda} I_\lambda\,\bigr)^\ast=\bigcupdot_{\lambda\in\Lambda} I_\lambda^\ast\,.\]
\end{lemma}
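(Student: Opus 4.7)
The plan is to apply Proposition \ref{ideals_in_union} to $I^\ast$ itself. Since annihilators are ideals, $I^\ast$ admits a unique decomposition $I^\ast=\bigcupdot_{\lambda\in\Lambda}J_\lambda$ with $J_\lambda\in\id(\A_\lambda)$, and the statement reduces to proving $J_\lambda=I_\lambda^\ast$ for each $\lambda$, where $I_\lambda^\ast$ denotes the annihilator of $I_\lambda$ computed inside $\A_\lambda$.

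The only structural input needed is how the term operation $\meet$ interacts with the cBCK-union: since $x\meet y=y\bdot(y\bdot x)$, for $a,b\in A_\lambda$ the meet computed in $\U$ coincides with the meet computed in $\A_\lambda$, while for $a\in A_\lambda$ and $b\in A_\mu$ with $\lambda\neq\mu$ one has $a\meet b=0$ in $\U$ (a point already observed just after the definition of the cBCK-union). With this in hand, both inclusions $J_\lambda\subseteq I_\lambda^\ast$ and $I_\lambda^\ast\subseteq J_\lambda$ unpack directly from the definitions. In the forward direction, an element $a\in J_\lambda\subseteq A_\lambda$ must annihilate every $t\in I$, and specializing to $t\in I_\lambda\subseteq I$ immediately places $a$ in $I_\lambda^\ast$. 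In the reverse direction, starting from $a\in I_\lambda^\ast$, I would verify $a\meet t=0$ in $\U$ for arbitrary $t\in I$ by splitting on the component of $t$: when $t\in I_\lambda$, use the hypothesis; when $t\in I_\mu$ with $\mu\neq\lambda$, the meet is $0$ across components automatically.

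The argument is essentially bookkeeping, and the only potential obstacle is the routine verification that $\meet$ on $\U$ restricts to $\meet$ on each $\A_\lambda$ and vanishes across components. Both of these facts are immediate from the piecewise definition of $\bdot$ on the union, so I would expect the resulting proof to be very short.
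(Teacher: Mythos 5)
Your proposal is correct and is essentially the paper's own argument: both directions come down to the observation that $\meet$ restricts to the component operation within each $\A_\lambda$ and vanishes across distinct components, exactly as in the paper's two-inclusion proof. Your extra step of first decomposing $I^\ast$ via Proposition \ref{ideals_in_union} and matching components is a harmless repackaging of the same computation.
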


\begin{proof} Take $x\in I^\ast$. Since $x\in\U$, we have $x\in \A_\alpha$ for some $\alpha\in\Lambda$. Then for any $y\in I_\alpha$, we have $x\meet y=0$, so $x\in I_\alpha^\ast\subseteq \bigcupdot_{\lambda\in\Lambda} I_\lambda^\ast$. Thus, $I^\ast\subseteq \bigcupdot_{\lambda\in\Lambda} I_\lambda^\ast$.

For the other inclusion, take $x\in\bigcupdot_{\lambda\in\Lambda} I_\lambda^\ast$. Then $x\in I_\alpha^\ast$ for some $\alpha\in\Lambda$, and $x\meet y=0$ for all $y\in I_\alpha$. If we take $z\in I_\beta$ for any $\beta\neq\alpha$, then $x\meet z=0$ since $x\in \A_\alpha$ and $z\in \A_\beta$. Hence $x\in\bigl(\,\bigcupdot_{\lambda\in\Lambda} I_\lambda\,\bigr)^\ast=I^\ast$, and thus $\bigcupdot_{\lambda\in\Lambda} I_\lambda^\ast\subseteq I^\ast$.
\end{proof}

\begin{theorem}\label{involutory_union} The algebra $\U=\bigcupdot_{\lambda\in\Lambda} \A_\lambda$ is involutory if and only if each $\A_\lambda$ is involutory.
\end{theorem}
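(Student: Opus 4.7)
The plan is to reduce the statement to a componentwise equality by combining the unique decomposition of ideals in Proposition \ref{ideals_in_union} with Lemma \ref{ann_of_union} applied twice. Given any ideal $I = \bigcupdot_{\lambda \in \Lambda} I_\lambda$ of $\U$, Lemma \ref{ann_of_union} gives $I^\ast = \bigcupdot_{\lambda\in\Lambda} I_\lambda^\ast$, and a second application yields $I^{\ast\ast} = \bigcupdot_{\lambda\in\Lambda} I_\lambda^{\ast\ast}$. Since the decomposition in Proposition \ref{ideals_in_union} is unique, the equality $I = I^{\ast\ast}$ in $\U$ is equivalent to $I_\lambda = I_\lambda^{\ast\ast}$ in $\A_\lambda$ for every $\lambda$.

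From this equivalence, the ``if'' direction is immediate: if each $\A_\lambda$ is involutory, then every $I_\lambda$ satisfies $I_\lambda = I_\lambda^{\ast\ast}$, so every ideal of $\U$ does as well.

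For the ``only if'' direction, I would fix $\mu \in \Lambda$ and an arbitrary $J \in \id(\A_\mu)$, and embed it into $\U$ as the ideal
\[I \;=\; J \;\cupdot \bigcupdot_{\lambda \neq \mu} \{0\}\,,\]
which is an ideal of $\U$ by Proposition \ref{ideals_in_union} since $\{0\}$ is an ideal of each $\A_\lambda$. Applying the componentwise formula for $I^{\ast\ast}$, the hypothesis $I = I^{\ast\ast}$ collapses to $J = J^{\ast\ast}$ provided $\{0\}^{\ast\ast} = \{0\}$ in each $\A_\lambda$ with $\lambda\neq \mu$; and this is immediate since $\{0\}^\ast = \A_\lambda$ (as $0 \meet x = 0$ for all $x$) and $\A_\lambda^\ast = \{0\}$ (since $a \in \A_\lambda^\ast$ forces $a \meet a = a = 0$). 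Hence $J$ is involutory in $\A_\mu$, and $\A_\mu$ is involutory.

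I do not anticipate a genuine obstacle: the work has essentially been done by Proposition \ref{ideals_in_union} and Lemma \ref{ann_of_union}. The only subtlety is the bookkeeping in the ``only if'' direction — one must package a single $\A_\mu$-ideal into a $\U$-ideal whose other components have trivial double annihilator — but this is handled by the observation about $\{0\}^{\ast\ast}$ above.
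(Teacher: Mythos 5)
Your proof is correct and follows exactly the route the paper intends: its proof of this theorem is the single line ``This follows from Proposition \ref{ideals_in_union} and Lemma \ref{ann_of_union},'' and your argument is precisely the fleshed-out version of that, including the needed observation that $\{0\}^{\ast\ast}=\{0\}$ in each component for the ``only if'' direction.
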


\begin{proof} This follows from Proposition \ref{ideals_in_union} and Lemma \ref{ann_of_union}.
\end{proof}


\subsection{cBCK-algebras associated to trees}

Let $T$ be a rooted tree; we will use Greek letters to indicate elements of the vertex set $V(T)$, and in particular we will use $\lambda$ to indicate the root of $T$. Denote by $\bb{Z}^T$ the set of all functions $V(T)\to\bb{Z}$. Let $A^T$ be the subset of $\bb{Z}^T$ consisting of all functions $\u\colon V(T)\to\bb{Z}$ with finitely many non-zero entries and where the first non-zero entry along every root-based path is positive.

For an element $\u\in A^T$ and a vertex $\alpha\in V(T)$, we will write $u_\alpha$ to indicate the value of $\u$ at $\alpha$. For a root-based path $p$ we will write $\u_p$ for the ``sub-tuple'' of $\u$ corresponding to the values of $\u$ along the path $p$. If $p$ is an interval in $T$, say $p=[\lambda, \alpha]$, we may write $\u_{[\lambda, \alpha]}$ rather than $\u_p$. On occassion we will use other standard interval notations, particularly $[\lambda, \alpha)$, indicating the interval from $\lambda$ to $\alpha$, but excluding $\alpha$. We will write $\0$ for the zero function.  For the sake of clarity, we provide a small example. 

\begin{example} Figure \ref{fig:tree1} shows a rooted tree $T$ and an element $\u\in A^T$.
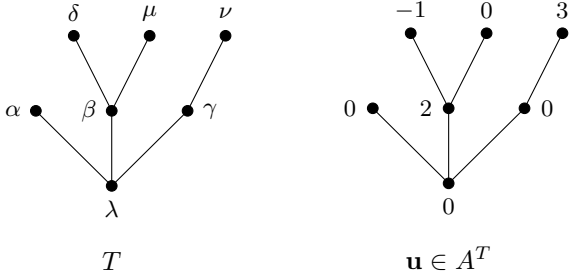
\begin{figure}[h]
\centering
\begin{tikzpicture}

\filldraw (0,0) circle (2pt);
\filldraw (0,1) circle (2pt);
\filldraw (-1,1) circle (2pt);
\filldraw (1,1) circle (2pt);
\filldraw (-.5,2) circle (2pt);
\filldraw (.5,2) circle (2pt);
\filldraw (1.5,2) circle (2pt);

\draw [-] (0,1) -- (0,0);
\draw [-] (-1,1) -- (0,0); 
\draw [-] (1,1) -- (0,0);
\draw [-] (0,1) -- (-.5,2);
\draw [-] (0,1) -- (.5,2);
\draw [-] (1,1) -- (1.5,2);

	\node at (0,-.3) {\small $\lambda$};
	\node at (-1.3, 1) {\small $\alpha$};
	\node at (-.3, 1) {\small $\beta$};
	\node at (1.3, 1) {\small $\gamma$};
	\node at (-.5, 2.3) {\small $\delta$};
	\node at (.5, 2.3) {\small $\mu$};
	\node at (1.5, 2.3) {\small $\nu$};	
	
\node at (0,-1) {$T$};	
\end{tikzpicture}
\hspace{1cm}
\begin{tikzpicture}

\filldraw (0,0) circle (2pt);
\filldraw (0,1) circle (2pt);
\filldraw (-1,1) circle (2pt);
\filldraw (1,1) circle (2pt);
\filldraw (-.5,2) circle (2pt);
\filldraw (.5,2) circle (2pt);
\filldraw (1.5,2) circle (2pt);

\draw [-] (0,1) -- (0,0);
\draw [-] (-1,1) -- (0,0); 
\draw [-] (1,1) -- (0,0);
\draw [-] (0,1) -- (-.5,2);
\draw [-] (0,1) -- (.5,2);
\draw [-] (1,1) -- (1.5,2);

	\node at (0,-.3) {\small $0$};
	\node at (-1.3, 1) {\small $0$};
	\node at (-.3, 1) {\small $2$};
	\node at (1.3, 1) {\small $0$};
	\node at (-.5, 2.3) {\small $-1$};
	\node at (.5, 2.3) {\small $0$};
	\node at (1.5, 2.3) {\small $3$};	
	
\node at (0,-1) {$\u\in A^T$};	
\end{tikzpicture}
\caption{A rooted tree $T$ and an element $\u\in A^T$}\label{fig:tree1}
\end{figure}
So $u_\beta=2$ and $u_\delta=-1$ while $\u_{[\lambda,\beta]}=(0,2)$ and $\u_{[\lambda,\delta]}=(0,2,-1)$.
\end{example}

For $\u\in A^T$ and any root-based path $p$, we see that $\u_p$ is a $\bb{Z}$-valued $|p|$-tuple. Let $\leq_\ell$ denote the lexicographic order on the set of $|p|$-tuples.

We will also have occasion to use the fact that $V(T)$ is partially ordered as well. We will write $\alpha\leq_T\beta$ to indicate that the vertices $\alpha$ and $\beta$ are comparable with $\alpha$ at or below $\beta$ in $T$. That is, $\alpha\leq_T\beta$ if and only if $\alpha$ is an ancestor of $\beta$. For example, $\lambda\leq_T\alpha$ for all $\alpha\in V(T)$.

Define an operation on $A^T$ as follows: for $\u, \vec{v}\in A^T$ and $\alpha\in V(T)$, 
\[(\u\bdot \vec{v})_\alpha=
\begin{cases}u_\alpha-v_\alpha & \text{ if $\u_{[\lambda, \alpha]}>_\ell \vec{v}_{[\lambda, \alpha]}$}\\
0 & \text{ if $\u_{[\lambda, \alpha]}\leq_\ell \vec{v}_{[\lambda, \alpha]}$}
\end{cases}\,.\] Under this operation, $A^T$ becomes a cBCK-algebra which we will denote $\A^T$. The proof of this is straightforward, though rather tedious. We refer the reader to the author's dissertation \cite{evans20} (Proposition 2.4.2) for the proof. The cBCK-order that arises can be described as follows: $\u\leq \vec{v}$ if and only if $\u_p\leq_\ell \vec{v}_p$ for every root-based path $p$. See again \cite{evans20}.

\begin{remark} In \cite{cornish81}, Cornish also used rooted trees to construct cBCK-algebras: if a rooted tree $T$ (thought of as a lower semilattice with 0) admits a valuation $v$ in a cBCK-algebra $\C$, then one can define a BCK-operation on $T$ yielding a commutative BCK-algebra. In some cases, Cornish's construction gives back known examples; for instance, his Example 1.3 is the same as the cBCK-union defined in subsection \ref{unions} of the present paper. This example first appears in \cite{it76}.

However, Cornish's construction is quite different from the construction presented here. Given a rooted tree $T$ with a valuation $v\colon T\to \C$ in a cBCK-algebra $\C$, Cornish's construction produces a cBCK-structure on the \textit{labels} of the vertices of $T$. In particular, if we begin with a finite rooted tree, the corresponding cBCK-algebra is finite. By contrast, using the construction defined above, each element of $\A^T$ is itself a $\bb{Z}$-labeling of $T$, and $\A^T$ is always infinite.

To give a concrete example of the disinction between these two constructions, consider the two-element chain $\ch_2=\{\lambda, \alpha\}$ with $\lambda<\alpha$. Cornish's construction applied to $\ch_2$ gives (an isomorphic copy of) the unique two-element cBCK-algebra, which is simple (and Boolean). Using our construction defined above, the algebra $\A^{\ch_2}$ is countably infinite and has a proper, non-trivial ideal. See Example \ref{chain of length n} below.
\end{remark}

For the next lemma, recall the term operation \[\u\meet\vec{v}=\vec{v}\bdot(\vec{v}\bdot\u)=\u\bdot(\u\bdot\vec{v})\,.\]

\begin{lemma}\label{meet in A^T} For any $\u,\vec{v}\in\A^T$ and any root-based path $p$, we have 
\[(\u\meet\vec{v})_p=\left.\begin{cases}\u_p & \text{ if $\u_p\leq_\ell\vec{v}_p$}\\\vec{v}_p & \text{ if $\u_p>_\ell\vec{v}_p$}\end{cases}\right\}\,.\] Consequently $(\u\meet\vec{v})_p=\u_p\meet_\ell\vec{v}_p$, where $\meet_\ell$ is the meet with respect to the lexicographic order on the set of $|p|$-tuples.
\end{lemma}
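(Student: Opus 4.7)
The plan is to avoid direct manipulation of $\vec{v}\bdot(\vec{v}\bdot\u)$ and instead exploit Proposition \ref{basic properties}(5), which tells us that $\u\meet\vec{v}$ is the greatest lower bound of $\u$ and $\vec{v}$ in the cBCK-order on $\A^T$. Since this order is characterized pathwise via $\leq_\ell$, one may build a greatest lower bound explicitly and then check that it matches the formula in the lemma; this sidesteps the very finicky case analysis a direct computation of $\vec{v}\bdot(\vec{v}\bdot\u)$ would require.

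First I would define a candidate $\vec{w}$ vertex-wise by setting
\[w_\alpha=\begin{cases}u_\alpha&\text{if }\u_{[\lambda,\alpha]}\leq_\ell\vec{v}_{[\lambda,\alpha]},\\v_\alpha&\text{otherwise,}\end{cases}\]
for each $\alpha\in V(T)$. The support of $\vec{w}$ is contained in $\mathrm{supp}(\u)\cup\mathrm{supp}(\vec{v})$, hence finite. For any root-based path $p=(\alpha_0,\ldots,\alpha_n)$, a short case analysis on the index $k$ at which $\u_p$ and $\vec{v}_p$ first disagree shows that $\vec{w}_p$ equals whichever of $\u_p,\vec{v}_p$ is $\leq_\ell$-smaller: for $j<k$ one has $u_{\alpha_j}=v_{\alpha_j}$, so both clauses of the definition give the same value, while for $j\geq k$ the lex comparison of $\u_{[\lambda,\alpha_j]}$ and $\vec{v}_{[\lambda,\alpha_j]}$ has the same sign as that of $\u_p$ and $\vec{v}_p$. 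This verifies the pathwise formula for $\vec{w}_p$, and since $\vec{w}_p$ inherits the first-nonzero-entry-positive property from $\u$ or $\vec{v}$, it also shows $\vec{w}\in\A^T$.

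To conclude, I would check that $\vec{w}$ is the greatest lower bound of $\u$ and $\vec{v}$: on any root-based path, $\vec{w}_p\in\{\u_p,\vec{v}_p\}$ is the $\leq_\ell$-smaller of the two, so $\vec{w}\leq\u$ and $\vec{w}\leq\vec{v}$; and any $\vec{z}$ with $\vec{z}\leq\u$ and $\vec{z}\leq\vec{v}$ satisfies $\vec{z}_p\leq_\ell\u_p$ and $\vec{z}_p\leq_\ell\vec{v}_p$ for every $p$, whence $\vec{z}_p\leq_\ell\min_\ell(\u_p,\vec{v}_p)=\vec{w}_p$, i.e., $\vec{z}\leq\vec{w}$. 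Uniqueness of the GLB then yields $\u\meet\vec{v}=\vec{w}$, and the ``consequently'' clause is immediate. The step I expect to be the main obstacle is the consistency check in the second paragraph --- confirming that the vertex-wise definition of $\vec{w}$ agrees with the pathwise description demanded by the lemma --- but this reduces to the standard fact that the sign of a lex comparison is fixed at the first disagreement and is preserved under extending the tuples past that point.
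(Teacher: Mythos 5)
Your proof is correct, but it takes a genuinely different route from the paper's. The paper proves the lemma by direct computation of $\vec{v}\bdot(\vec{v}\bdot\u)$ along the path $p$: in the case $\u_p\leq_\ell\vec{v}_p$ one gets $(\u\bdot\vec{v})_p=\0_p$ immediately, and in the case $\u_p>_\ell\vec{v}_p$ one locates the first vertex $\beta$ of disagreement and tracks the double subtraction $u_\delta-(u_\delta-v_\delta)=v_\delta$ past $\beta$. You instead build the candidate meet $\vec{w}$ vertex-wise, verify that $\vec{w}_p$ is the lex-minimum of $\u_p$ and $\vec{v}_p$ on every root-based path, and then invoke Proposition \ref{basic properties}(5) (that $\meet$ is the greatest lower bound) together with the pathwise characterization of the cBCK-order on $\A^T$ and uniqueness of greatest lower bounds. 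Both arguments hinge on the same elementary observation that a lex comparison is decided at the first disagreement and persists under extension, but they deploy it differently. What your approach buys is that you never have to manipulate the operation $\bdot$ at all, so the computation is arguably cleaner and more conceptual; the cost is that you import two external facts --- that $\meet$ is the GLB, and the pathwise description of the order on $\A^T$, which the paper only states with a citation to the dissertation --- and you incur the small additional obligations of checking that $\vec{w}$ is well defined vertex-wise and actually lies in $\A^T$ (finite support and positivity of first nonzero entries), which you do address. The paper's computation is self-contained given only the definition of $\bdot$ on $\A^T$.
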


\begin{proof} Assume first that $\u_p\leq_\ell \vec{v}_p$. Then $(\u\bdot \vec{v})_\alpha=0$ for all $\alpha\in p$ and so $(\u\bdot \vec{v})_p=\0_p$. From this we see that $(\u\meet \vec{v})_\alpha=\bigl(\u\bdot(\u\bdot \vec{v})\bigr)_\alpha=u_\alpha$ for all $\alpha\in p$, and hence $(\u\meet\vec{v})_p=\u_p$.

Next, assume instead that $\u_p>_\ell\vec{v}_p$. Then there is some vertex $\beta\in p$ such that $\vec{v}_{[\lambda, \beta)}=\u_{[\lambda,\beta)}$ and $v_\beta<u_\beta$. So for vertices $\gamma\in[\lambda,\beta)$ we have $(\u\bdot \vec{v})_\gamma=0$, meaning $\bigl(\u\bdot(\u\bdot\vec{v})\bigr)_\gamma=u_\gamma=v_\gamma$. But then for vertices $\delta\in p\setminus [\lambda,\beta)$ we have $(\u\bdot \vec{v})_\delta=u_\delta-v_\delta$ and so $\bigl(\u\bdot(\u\bdot\vec{v})\bigr)_\delta=u_\delta-(u_\delta-v_\delta)=v_\delta$. Thus, we have $\bigl(\u\bdot(\u\bdot\vec{v})\bigr)_\alpha=v_\alpha$ for all $\alpha\in p$, and $\bigl(\u\bdot(\u\bdot\vec{v})\bigr)_p=\vec{v}_p$ as desired.
\end{proof}

The next several results describe the ideals of $\A^T$ and their general behavior. Let $\bb{P}(T)$ denote the set of all root-based paths in $T$. Consider the binary relation $\zeta\subseteq \A^T\times \bb{P}(T)$ given by
\[\zeta =\{\,(\u, p)\in \A^T\times\bb{P}(T)\mid \u_p=\0_p\,\}\,.\] This relation induces a Galois connection:
\begin{align*}
&\text{for $U\subseteq \A^T$, put } \mc{P}(U)=\{p\in \bb{P}(T)\mid \u_p=\0_p \text{ for all $\u\in U$}\}\\
&\text{for $R\subseteq \bb{P}(T)$, put } I(R)=\{\u\in \A^T\mid \u_p=\0_p \text{ for all $p\in R$}\}\,.
\end{align*} Notice that $I(\emptyset)=\A^T$ and, though it is an abuse of notation, $I(T):=I\bigl(\bb{P}(T)\bigr)=\{\0\}$. If $R$ is a singleton set, say $R=\{p\}$, we will simply write $I(p)$, and if $p=[\lambda,\alpha]$ we will write $I(\alpha)$. 

\begin{proposition}\label{I_P is an ideal} For any collection of root-based paths $R$, the set $I(R)$ is an ideal of $\A^T$.
\end{proposition}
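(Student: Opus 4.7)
The plan is to verify the two defining properties of an ideal directly from the definition of $I(R)$, using the construction of $\A^T$ and the piecewise formula for $\bdot$.

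First, $\0\in I(R)$ is immediate since $\0_p=\0_p$ for every path $p$. The only nontrivial part is the implication: assuming $\u\bdot\vec{v}\in I(R)$ and $\vec{v}\in I(R)$, show $\u\in I(R)$. Fix $p\in R$; the task is to show $u_\alpha=0$ for every vertex $\alpha\in p$.

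The key observation is that $\vec{v}\in I(R)$ gives $\vec{v}_p=\0_p$, hence $\vec{v}_{[\lambda,\alpha]}=\0_{[\lambda,\alpha]}$ for every $\alpha\in p$. The comparison in the definition of $\bdot$ then reduces to whether $\u_{[\lambda,\alpha]}\leq_\ell\0_{[\lambda,\alpha]}$ or $\u_{[\lambda,\alpha]}>_\ell\0_{[\lambda,\alpha]}$. Here I would invoke the defining property of $A^T$: the first nonzero entry along any root-based path is positive. Applied to the prefix $[\lambda,\alpha]$, this forces $\u_{[\lambda,\alpha]}\leq_\ell\0_{[\lambda,\alpha]}$ to collapse to $\u_{[\lambda,\alpha]}=\0_{[\lambda,\alpha]}$, which in particular gives $u_\alpha=0$.

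In the remaining case $\u_{[\lambda,\alpha]}>_\ell\0_{[\lambda,\alpha]}$, the formula yields $(\u\bdot\vec{v})_\alpha=u_\alpha-v_\alpha=u_\alpha$; and because $\u\bdot\vec{v}\in I(R)$, the left side equals $0$, so again $u_\alpha=0$. Combining both cases gives $\u_p=\0_p$, and since $p\in R$ was arbitrary, $\u\in I(R)$.

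There is no genuine obstacle here; the only subtlety is remembering that the sign condition on $A^T$ rules out ``negative prefixes,'' which is exactly what makes the lex-comparison against $\0$ degenerate in the desired way. The proof is a short case analysis on the two branches of the piecewise definition of $\bdot$.
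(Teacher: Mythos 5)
Your proof is correct and follows essentially the same route as the paper's: both reduce to showing $\u_p=\0_p$ vertex-by-vertex using $\vec{v}_p=\0_p$, the paper compressing your two cases into the single observation that $\vec{v}_p\leq_\ell\u_p$ forces $(\u\bdot\vec{v})_p=\u_p$. Your explicit appeal to the sign condition on $A^T$ to dispose of the case $\u_{[\lambda,\alpha]}\leq_\ell\0_{[\lambda,\alpha]}$ is a slightly more careful rendering of the same argument.
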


\begin{proof} Clearly $\0\in I(R)$. Suppose $\u\bdot\vec{v}\in I(R)$ and $\vec{v}\in I(R)$, and pick $p\in R$. Then $\vec{v}_p=\0_p$, and we have $\vec{v}_p\leq_\ell\u_p$. This gives $(\u\bdot\vec{v})_\alpha=u_\alpha-v_\alpha=u_\alpha$ for each $\alpha\in p$, and hence $(\u\bdot\vec{v})_p=\u_p$. But $(\u\bdot\vec{v})_p=\0_p$ since $\u\bdot\vec{v}\in I(R)$, and therefore $\u_p=\0_p$. Since $p$ was arbitrary, $\u_p=\0_p$ for all $p\in R$, and $\u\in I(R)$.
\end{proof}

\begin{theorem}\label{ideals in A^T} For every ideal $J$ of $\A^T$, we have $J=I(\mc{P}(J))$. In particular, every ideal of $\A^T$ has the form $I(R)$ for some collection $R$ of root-based paths.
\end{theorem}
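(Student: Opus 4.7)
The plan is to prove the nontrivial inclusion $I(\mc{P}(J)) \subseteq J$; the reverse inclusion is immediate from the Galois connection. Fix $\u \in I(\mc{P}(J))$; the strategy is to construct $\vec{v} \in J$ with $\u \leq \vec{v}$, for then $\u \bdot \vec{v} = \0 \in J$ and $\vec{v} \in J$ force $\u \in J$ by the ideal axiom.

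Three preparatory facts about $\A^T$ make this construction possible. First, $\A^T$ is a lattice: using that $\u \leq \vec{w}$ if and only if $\u_p \leq_\ell \vec{w}_p$ on every root-based path $p$, the join $\u \vee \vec{v}$ is realized coordinatewise by letting $(\u \vee \vec{v})_\beta$ equal $u_\beta$ or $v_\beta$ according to which of $\u_{[\lambda, \beta]}, \vec{v}_{[\lambda, \beta]}$ is lex-greater. Second, $J$ is closed under these joins: the identity $(\vec{v} \vee \vec{w}) \bdot \vec{v} = \vec{w} \bdot \vec{v}$ (a short case analysis on the lex comparison) combined with the ideal axiom promotes $\vec{v}, \vec{w} \in J$ to $\vec{v} \vee \vec{w} \in J$. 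Third, $J$ is closed under multiplication by positive integers, via the identity $n\vec{v} \bdot \vec{v} = (n-1)\vec{v}$ and induction on $n$.

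Now let $V_J = \{\gamma \in V(T) : v_\gamma \neq 0 \text{ for some } \vec{v} \in J\}$. For each $\alpha \in \mathrm{supp}(\u)$ the path $[\lambda, \alpha]$ cannot lie in $\mc{P}(J)$ (else $u_\alpha = 0$), so $\alpha$ has an ancestor in $V_J$; let $\epsilon_\alpha$ be the $\leq_T$-minimal such ancestor and pick $\vec{w}^\alpha \in J$ witnessing $\epsilon_\alpha \in V_J$. The ``first nonzero positive'' constraint on elements of $\A^T$, combined with the minimality of $\epsilon_\alpha$, forces the first nonzero of $\vec{w}^\alpha$ along $[\lambda, \alpha]$ to sit precisely at $\epsilon_\alpha$ with positive value (any earlier nonzero entry would itself lie in $V_J$, contradicting minimality). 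The first nonzero of $\u$ along $[\lambda, \alpha]$ occurs at some $\delta_\alpha \in \mathrm{supp}(\u)$, and the same argument applied to an ancestor of $\delta_\alpha$ in $V_J$ yields $\epsilon_\alpha \leq_T \delta_\alpha$. Choosing $n_\alpha$ sufficiently large therefore produces $n_\alpha \vec{w}^\alpha >_\ell \u$ strictly along $[\lambda, \alpha]$. Setting $\vec{v} := \bigvee_{\alpha \in \mathrm{supp}(\u)} n_\alpha \vec{w}^\alpha$---a finite join since $\mathrm{supp}(\u)$ is finite---gives $\vec{v} \in J$, and strict dominance on each $[\lambda, \alpha]$ propagates to $\u \leq \vec{v}$ globally: on any path $[\lambda, \beta]$ whose maximal support vertex is $\gamma$, the strict inequality already on $[\lambda, \gamma]$ determines the lex comparison on $[\lambda, \beta]$.

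The main obstacle is guaranteeing a witness $\vec{w}^\alpha \in J$ whose first nonzero along $[\lambda, \alpha]$ occurs no later than $\u$'s; without such positional control no positive scaling can lex-dominate $\u$, because lex comparison is decided at the earliest disagreement. Routing the choice through the $\leq_T$-minimal ancestor in $V_J$ is the key device, using the ``first nonzero positive'' clause of the definition of $\A^T$ in an essential way to rule out any earlier nonzero position for $\vec{w}^\alpha$.
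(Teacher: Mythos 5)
Your proof is correct, but it takes a genuinely different route from the paper's. The paper works downward: for each minimal support vertex $\alpha$ of $\u$ it finds a witness $\vec{v}\in J$ that is nonzero somewhere on $[\lambda,\alpha]$ and shows that repeated subtraction $\u\bdot\vec{v}^{\,n}$ annihilates $\u$ on every path through $\alpha$ once $n$ exceeds $\lceil u_\alpha/v_\alpha\rceil$; stringing these subtractions together over the finitely many such $\alpha$ drives $\u$ to $\0\in J$, and the ideal axiom is applied repeatedly. You work upward: you scale and join witnesses to manufacture a single $\vec{v}\in J$ with $\u\leq\vec{v}$ and apply the ideal axiom once. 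The price of your route is the three auxiliary structural facts the paper never needs --- that $\A^T$ carries the coordinatewise lex join, that ideals of $\A^T$ are closed under that join, and that they are closed under positive integer scaling. I checked the identities you leave as ``short case analyses'' and they do hold: $(\vec{v}\vee\w)\bdot\vec{v}=\w\bdot\vec{v}$ by splitting on whether $\w_{[\lambda,\alpha]}>_\ell\vec{v}_{[\lambda,\alpha]}$, and $n\vec{v}\bdot\vec{v}=(n-1)\vec{v}$ because $n\vec{v}$ and $\vec{v}$ first become nonzero at the same vertex of any path, with positive value there. Your positional control via the $\leq_T$-minimal ancestor $\epsilon_\alpha\in V_J$ is the same leverage point as the paper's dichotomy $\beta<_T\alpha$ versus $\beta=\alpha$; both arguments ultimately exploit the identical feature of $\A^T$, namely that the leading entry of any member of $J$ along a path is positive. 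Two elisions are harmless but worth noting: the join must be checked to land in $A^T$ (finite support and leading-entry positivity are inherited), and root-based paths disjoint from $\mathrm{supp}(\u)$ satisfy $\u_p=\0_p\leq_\ell\vec{v}_p$ automatically since $\vec{v}\in A^T$. What your approach buys is a stronger structural byproduct the paper does not record --- every ideal of $\A^T$ is closed under joins and integer scaling --- while the paper's argument is shorter and self-contained, resting only on the ideal axiom.
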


\begin{proof} Let $J$ be an ideal of $\A^T$. We claim that $J=I\bigl(\mc{P}(J)\bigr)$. The inclusion $\subseteq$ follows from the fact that $\mc{P}(-)$ and $I(-)$ form a Galois connection.

For the other inclusion, take $\u\in I\bigl(\mc{P}(J)\bigr)$ with $\u\neq\0$. Let $\alpha\in V(T)$ be such that $\u_{[\lambda,\alpha)}=\0_{[\lambda,\alpha)}$ but $u_\alpha\neq 0$; then $u_\alpha>0$. Set $p:=[\lambda,\alpha]$. Then $p\notin\mc{P}\bigl(I\bigl(\mc{P}(J)\bigr)\bigr)=\mc{P}(J)$. So there is $\vec{v}\in J$ such that $\vec{v}_p\neq \0_p$. Let $\beta\in[\lambda,\alpha]$ be such that $\vec{v}_{[\lambda,\beta)}=\0_{[\lambda,\beta)}$ and $v_\beta\neq 0$. Note that $v_\beta>0$.

Let $k=\bigl\lceil\frac{u_\beta}{v_\beta}\bigr\rceil$ and put $n=k+1$. We claim that $(\u\bdot\vec{v}^n)_q=\0_q$ for any root-based path $q$ having $p$ as a prefix.

If $\beta <_T\alpha$, then $u_\beta=0$ and $n=1$. But $v_\beta>0$ tells us $\u_p<_\ell \vec{v}_p$, meaning $\u_q<_\ell \vec{v}_q$ for any root-based $q$ having $p$ as a prefix, and so $(\u\bdot\vec{v}^n)_q=(\u\bdot\vec{v})_q=\0_q$ for any such path $q$.

So suppose $\beta=\alpha$. We know that $k$ is the smallest positive integer such that $\frac{u_\alpha}{v_\alpha}<k$, or equivalently $u_\alpha-k\,v_\alpha<0$. By the definition of $\bdot$, this means $(\u\bdot\vec{v}^k)_\alpha=0$. Since $v_\alpha=v_\beta>0$, we see that $(\u\bdot\vec{v}^k)_q<_\ell \vec{v}_q$ for any root-based $q$ containing $p$ as a prefix, and therefore $(\u\bdot\vec{v}^n)_q=\bigl((\u\bdot\vec{v}^k)\bdot\vec{v}\bigr)_q=\0_q$ for any such path $q$. This proves the claim.

By definition of $\A^T$, the element $\u$ has finitely many non-zero vertices, and so in particular there are finitely many vertices $\alpha$ such that $u_\alpha\neq 0$ but $\u_{[\lambda, \alpha)}=\0_{[\lambda,\alpha)}$. Said differently, there are only finitely many paths along which $\u$ takes on a non-zero value. Enumerate these vertices $\alpha_1, \alpha_2, \ldots, \alpha_m$. By the argument in the preceding paragraphs, for each $\alpha_i$ we can find an element $\vec{v}_i\in J$ and positive integer $l_i$ such that $(\u\bdot\vec{v}_i^{l_i})_q=\0_q$ for any root-based path $q$ containing $[\lambda, \alpha_i]$ as a prefix. But then
\[\bigl(\cdots\bigl((\u\bdot\vec{v}_1^{l_1})\bdot\vec{v}_2^{l_2}\bigr)\bdot \cdots \bigr)\bdot\vec{v}_m^{l_m}=\0\in J\,.\] Since each $\vec{v}_i\in J$, repeatedly applying the ideal property gives us $\u\in J$ as well. 

Hence, $I\bigl(\mc{P}(J)\bigr)\subseteq J$, and therefore $I\bigl(\mc{P}(J)\bigr)= J$.
\end{proof}

For two root-based paths $p$ and $q$ we will write $p\subseteq q$ to indicate that $p$ is a prefix of $q$.

\begin{proposition}\label{behavior of ideals in A^T} Let $R, R_1, R_2\subseteq \bb{P}(T)$ and $p, q\in\bb{P}(T)$.
\begin{enumerate}
\item If $p\in R$, then $I(R)\subseteq I(p)$.

\item $I(R)=\bigcap_{p\in R} I(p)$.

\item If $p\subseteq q$, then $I(q)\subseteq I(p)$.

\item $I(p)\join I(q)=I(p\cap q)$, and consequently $I(\alpha)\join I(\beta)=I(\alpha\meet_T\beta)$ for vertices $\alpha,\beta\in V(T)$.

\item $I(R_1)\join I(R_2)=\bigcap_{p\in R_1}\bigcap_{q\in R_2} I(p\cap q)$.

\item $I(R_1)\cap I(R_2)=I(R_1\cup R_2)$.

\end{enumerate}
\end{proposition}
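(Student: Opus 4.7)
Parts (1), (2), (3), and (6) should fall out almost directly from the definition of $I(R)$ together with the Galois connection. For (1), if $p\in R$ and $\u_q=\0_q$ for all $q\in R$, then in particular $\u_p=\0_p$. Part (2) is the equality $\{\u : \forall p\in R, \u_p=\0_p\}=\bigcap_p\{\u : \u_p=\0_p\}$, and (6) is the analogous statement for unions: $\u$ vanishes on $R_1\cup R_2$ iff it vanishes on each. Part (3) uses that if $p$ is a prefix of $q$ then $\u_q=\0_q$ forces $\u_p=\0_p$. I would dispatch each of these in a line or two.

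Part (4) is where the real work is. The inclusion $I(p)\join I(q)\subseteq I(p\cap q)$ follows from (3), since $p\cap q$ is a prefix of both $p$ and $q$. For the reverse, let $\gamma$ be the terminal vertex of $p\cap q$, and let $c$ be the child of $\gamma$ lying on $p$ (if $p\subseteq q$ the statement degenerates into (3), so assume $p$ and $q$ branch at $\gamma$). Given $\u\in I(p\cap q)$, I would split $\u$ along the subtree $T_c$ rooted at $c$: define $\vec{v}$ by $v_\delta=u_\delta$ for $\delta\in T_c$ and $0$ elsewhere, and $\w$ by $w_\delta=u_\delta$ for $\delta\notin T_c$ and $0$ elsewhere. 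Because $\u$ already vanishes on $[\lambda,\gamma]$, the first nonzero entry of each piece along any root-based path is inherited from $\u$, hence positive, so $\vec{v},\w\in A^T$. Then $\w_p=\0_p$ (since $p$ enters $T_c$ at $c$) and $\vec{v}_q=\0_q$ (since $q$ leaves $\gamma$ without visiting $T_c$), giving $\w\in I(p)$, $\vec{v}\in I(q)$. A direct lex computation, splitting on whether $\delta\in T_c$, shows that $\u\bdot\w=\vec{v}$ and hence $(\u\bdot\w)\bdot\vec{v}=\vec{v}\bdot\vec{v}=\0$; Theorem \ref{iseki ideal theorem} then places $\u\in(\w,\vec{v}\,]\subseteq I(p)\join I(q)$. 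The ``consequently'' clause drops out because in a tree $[\lambda,\alpha]\cap[\lambda,\beta]=[\lambda,\alpha\meet_T\beta]$.

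For (5), the inclusion $I(R_1)\join I(R_2)\subseteq\bigcap_{p,q}I(p\cap q)$ is formal: for each $p\in R_1$, $q\in R_2$, both $I(R_1)$ and $I(R_2)$ sit inside $I(p)\join I(q)=I(p\cap q)$ by (2) and (4). The reverse inclusion generalizes the splitting used in (4), and this is where finiteness of support is essential. Given $\u$ in the intersection, let $\alpha_1,\dots,\alpha_m$ be the (pairwise $T$-incomparable) minimal support vertices of $\u$. The assumption $\u_{p\cap q}=\0_{p\cap q}$ for every pair forces each $\alpha_i$ to satisfy $\alpha_i\notin\bigcup R_1$ or $\alpha_i\notin\bigcup R_2$, so I can partition $\{\alpha_1,\dots,\alpha_m\}$ into disjoint sets $B_1\cup B_2$ where $B_j\cap\bigcup R_j=\emptyset$. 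Then I define $\u^{(j)}_\delta=u_\delta$ when $\delta$ descends from some element of $B_j$ and $0$ otherwise; because the $\alpha_i$ are incomparable these supports are disjoint, $\u^{(j)}\in A^T$, and the same argument as in (4) (applied path by path in $R_j$) shows $\u^{(j)}\in I(R_j)$. The same lex computation as before gives $\u\bdot\u^{(1)}=\u^{(2)}$, hence $(\u\bdot\u^{(1)})\bdot\u^{(2)}=\0$ and $\u\in I(R_1)\join I(R_2)$.

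The main obstacle I expect is the explicit splitting in (4): keeping track of the lexicographic condition $\u_{[\lambda,\delta]}>_\ell\w_{[\lambda,\delta]}$ versus equality, simultaneously verifying that each piece lies in $A^T$ and that $\u\bdot\w$ comes out to exactly $\vec{v}$. Once this step is in hand, the upgrade to (5) is largely bookkeeping with the finite support of $\u$.
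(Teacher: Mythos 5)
Your proof is correct, and while parts (1), (2), (3), and (6) match the paper's (which simply declares them clear), your arguments for (4) and (5) diverge from the paper's in ways worth noting. For (4) the underlying technique is the same --- decompose $\u$ into a piece lying in $I(p)$ and a piece lying in $I(q)$, check that two successive subtractions give $\0$, and invoke Theorem \ref{iseki ideal theorem} --- but the decompositions differ: the paper first subtracts the restriction of $\u$ to $T\setminus p$ (an element of $I(p)$), leaving a remainder supported on $p\cap q^c$ which is then killed by an element of $I(q)$; you instead split $\u$ along the subtree rooted at the child of the branch vertex $\gamma$. Both work; the paper's version is marginally more uniform in that it needs no separate treatment of the degenerate case $p\subseteq q$ and no identification of a terminal vertex of $p\cap q$, whereas yours makes the geometry of the split more transparent. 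The genuine difference is in (5). The paper deduces it formally from (2) and (4) by writing $\bigl(\bigcap_{p\in R_1}I(p)\bigr)\join\bigl(\bigcap_{q\in R_2}I(q)\bigr)=\bigcap_{p\in R_1}\bigcap_{q\in R_2}\bigl(I(p)\join I(q)\bigr)$, citing distributivity of $\id(\A^T)$; when $R_1$ or $R_2$ is infinite this is an \emph{infinite} distributive law (join over arbitrary intersections), which does not follow from finite distributivity alone and is not automatic in an ideal lattice (which is a frame, where the guaranteed law is the dual one). Your element-level argument --- partitioning the finitely many minimal support vertices of $\u$ into a set avoiding $\bigcup R_1$ and a set avoiding $\bigcup R_2$, splitting $\u$ accordingly, and applying Theorem \ref{iseki ideal theorem} --- proves the reverse inclusion directly and so actually closes this gap; the key observation that a minimal support vertex lying on some $p\in R_1$ and some $q\in R_2$ would lie on $p\cap q$ and hence carry the value $0$ is exactly right. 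In short: same skeleton for (4), and a more self-contained (and in the infinite case more complete) argument for (5).
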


\begin{proof}\hfill\break
\noindent(1), (2), and (3) are clear.

\noindent(4) From (3), we know $I(p), I(q)\subseteq I(p\cap q)$, and thus $I(p)\join I(q)\subseteq I(p\cap q)$. For the other inclusion, take $\u\in I(p\cap q)$ so that $\u_{p\cap q}=\0_{p\cap q}$. Define $\vec{v}\in\A^T$ by \[v_\alpha=\left.\begin{cases}0 &\text{ if $\alpha\in p$}\\ u_\alpha&\text{ if $\alpha\in T\setminus p$}\end{cases}\right\}\,.\] Note that
\begin{align*}
(\u\bdot\vec{v})_\alpha &= 0 \text{ for all $\alpha\in T\setminus(p\cap q^c)$}\\
(\u\bdot\vec{v})_\alpha &= u_\alpha \text{ for all $\alpha\in p\cap q^c$}\,,
\end{align*} and that $\vec{v}\in I(p)$. Now define $\w\in\A^T$ by
\[w_\alpha=\left.\begin{cases}0 &\text{ if $\alpha\in q$}\\ (\u\bdot\vec{v})_\alpha&\text{ if $\alpha\in T\setminus q$}\end{cases}\right\}\] and notice that
\begin{align*}
\bigl((\u\bdot\vec{v})\bdot\w\bigr)_\alpha &= 0 \text{ for all $\alpha\in T\setminus(p\cap q^c)$}\\
\bigl((\u\bdot\vec{v})\bdot\w)_\alpha &= 0 \text{ for all $\alpha\in p\cap q^c$ since $p\cap q^c\subseteq T\setminus q$}\,,
\end{align*} and $\w\in I(q)$. Hence $(\u\bdot\vec{v})\bdot\w=\0$ with $\w,\vec{v}\in I(p)\cup I(q)$, so by Theorem \ref{iseki ideal theorem} we have $\u\in\bigl(I(p)\cup I(q)\bigr]=I(p)\join I(q)$. Thus $I(p\cap q)\subseteq I(p)\join I(q)$, and therefore $I(p\cap q)= I(p)\join I(q)$.

\noindent(5) Using (2) and (4), together with the fact that $\id(\A^T)$ is distributive, we have
\begin{align*}
I(R_1)\join I(R_2)
=\Bigl(\bigcap_{p\in R_1} I(p)\Bigr)\join\Bigl(\bigcap_{q\in R_2} I(q)\Bigr)
&=\bigcap_{p\in R_1}\bigcap_{q\in R_2} I(p)\join I(q)\\
&=\bigcap_{p\in R_1}\bigcap_{q\in R_2} I(p\cap q)\,.
\end{align*}

\noindent(6) Since $R_1,R_2\subseteq R_1\cup R_2$, an easy extension of (1) above gives $I(R_1\cup R_2)\subseteq I(R_1), I(R_2)$. Hence $I(R_1\cup R_2)\subseteq I(R_1)\cap I(R_2)$. For the other inclusion, take $\u\in I(R_1)\cap I(R_2)$ so that $\u_p=\0_p$ for all $p\in R_1$ and $\u_q=\0_q$ for all $q\in R_2$. Then $\u_p=\0_p$ for all $p\in R_1\cup R_2$, so $\u\in I(R_1\cup R_2)$ and the result follows.
\end{proof}

\begin{theorem}\label{prime ideals in A^T} An ideal of $\A^T$ is prime if and only if it can be realized as $I(p)$ for a root-based path $p$.
\end{theorem}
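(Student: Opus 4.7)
The plan is to prove the biconditional in two steps. The $(\Leftarrow)$ direction is nearly immediate from Lemma \ref{meet in A^T}: if $\u\meet\vec{v}\in I(p)$, then $\u_p \meet_\ell \vec{v}_p = \0_p$, and since $\leq_\ell$ is a total order on $|p|$-tuples, the lex-meet equals whichever of $\u_p, \vec{v}_p$ is smaller; for that to be $\0_p$, one of them must be $\0_p$, yielding $\u\in I(p)$ or $\vec{v}\in I(p)$. Properness of $I(p)$ is clear, since the element valued $1$ at the root $\lambda$ and $0$ elsewhere sits in $\A^T$ but not in $I(p)$ (as $\lambda\in p$).

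For the $(\Rightarrow)$ direction, let $J$ be a prime ideal. By Theorem \ref{ideals in A^T}, $J = I\bigl(\mc{P}(J)\bigr)$, so my task reduces to showing that $\mc{P}(J)$ collapses to a single root-based path. My strategy is first to show that $\mc{P}(J)$ is linearly ordered by the prefix relation $\subseteq$, and then to recover $J = I(p^*)$ where $p^* = \bigcup_{p\in\mc{P}(J)} p$.

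The chain property is the heart of the argument and the main obstacle. Suppose toward contradiction that $p_1, p_2 \in \mc{P}(J)$ are prefix-incomparable; then they share a longest common prefix $[\lambda,\gamma]$ and continue through distinct children $\beta_1 \neq \beta_2$ of $\gamma$. I would then consider the ``indicator'' elements $\vec{e}_i\in\A^T$ taking value $1$ at $\beta_i$ and $0$ elsewhere --- these manifestly lie in $\A^T$. Because $\beta_1$ and $\beta_2$ are $T$-incomparable, no root-based path $q$ contains both, so on every such $q$ at least one of $(\vec{e}_1)_q, (\vec{e}_2)_q$ equals $\0_q$; Lemma \ref{meet in A^T} then gives $\vec{e}_1 \meet \vec{e}_2 = \0 \in J$. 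Primeness of $J$ forces some $\vec{e}_i \in J$, but Proposition \ref{behavior of ideals in A^T}(1) says $J \subseteq I(p_i)$, while $(\vec{e}_i)_{p_i}$ has a $1$ at $\beta_i \in p_i$, so $\vec{e}_i \notin I(p_i) \supseteq J$, a contradiction.

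With $\mc{P}(J)$ a chain that is also closed under taking sub-paths (a formality from the Galois connection $I\circ\mc{P}$), its set-theoretic union $p^* := \bigcup_{p\in\mc{P}(J)} p$ is itself a root-based path, and
\[I(p^*) = \{\u : u_\alpha = 0 \text{ for all } \alpha \in p^*\} = \bigcap_{p\in\mc{P}(J)} I(p) = I\bigl(\mc{P}(J)\bigr) = J\,,\]
completing the proof. The essential insight is that the tree structure of $T$ allows one to manufacture two nonzero elements whose meet is globally zero whenever incomparable branches exist, and this is precisely the obstruction to primeness.
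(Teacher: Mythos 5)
Your proof is correct and follows essentially the same route as the paper's: the $(\Leftarrow)$ direction via Lemma \ref{meet in A^T}, and the $(\Rightarrow)$ direction by writing the prime ideal as $I(R)$ with $R=\mc{P}(J)$, using indicator elements supported on incomparable vertices (whose meet is $\0$) to force $R$ to be a chain under the prefix order, and then collapsing that chain to a single, possibly infinite, root-based path. The only point worth adding is the one-line observation that $\mc{P}(J)\neq\emptyset$ (since $J$ is proper while $I(\emptyset)=\A^T$), so that $p^*=\bigcup_{p\in\mc{P}(J)}p$ is genuinely a root-based path.
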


\begin{proof} We first prove $I(p)$ is a prime ideal. By Proposition \ref{I_P is an ideal} we know $I(p)$ is an ideal, and we note that it is a proper ideal since $u_\lambda=0$ for all $\u\in I(p)$. Suppose $\u\meet\vec{v}\in I(p)$. Then $(\u\meet\vec{v})_p=\0_p$ and by Lemma \ref{meet in A^T} we have either $\u_p=\0_p$ or $\vec{v}_p=\0_p$. That is, either $\u\in I(p)$ or $\vec{v}\in I(p)$. 

Assume now that $I$ is a prime ideal. By Theorem \ref{ideals in A^T}, we must have $I=I(R)$ for some collection $R$ of root-based paths. We note $R$ must be non-empty, for otherwise $I=\A^T$, which is a contradiction since prime ideals are proper. And if $R$ is a singleton set, we're done.

So suppose $|R|\geq 2$, and assume that $R$ contains two root-based paths, say $p$ and $q$, neither of which is a prefix of the other. Then we may choose vertices $\alpha\in p\setminus (p\cap q)$ and $\beta\in q\setminus (p\cap q)$. Define $\u\in \A^T$ to be zero everywhere except $u_\alpha=1$, and similarly define $\vec{v}\in \A^T$ to be zero everywhere except $v_\beta=1$. Then certainly $\u\meet\vec{v}=\0\in I$, but $\u\notin I$ and $\vec{v}\notin I$. Thus, $I$ is not prime, a contradiction. 

Therefore, the root-based paths appearing in $R$ must form an ascending chain $p_1\subseteq p_2\subseteq p_3\subseteq \cdots$. If this chain is finite, stopping at some $p_n\in R$, then we have $p_i\subseteq p_n$ for all $i$, meaning $I(p_n)\subseteq I(p_i)$ for all $i$, and consequently $I(R)=I(p_n)$ by Proposition \ref{behavior of ideals in A^T}(2). If this chain is infinite, let $p$ represent the infinite-length root-based path carved out by the $p_i$'s. Note that $\u_p=\0_p$ if and only if $\u_{p_i}=\0_{p_i}$ for all $i$, and so again $I(R)=I(p)$.
\end{proof}

\begin{proposition}\label{X(A^T)=P(T)} As posets, $\X(\A^T)\cong \bb{P}(T)^\partial$, where $\bb{P}(T)^\partial$ is the order-dual of $\bb{P}(T)$.
\end{proposition}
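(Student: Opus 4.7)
The plan is to exhibit an explicit order-reversing bijection $\phi\colon \bb{P}(T) \to \X(\A^T)$ defined by $p \mapsto I(p)$. Well-definedness and surjectivity are immediate from Theorem \ref{prime ideals in A^T}, which says every prime ideal of $\A^T$ has the form $I(p)$ for some root-based path $p$. The remaining tasks are (i) to show $\phi$ is injective and (ii) to show $p \subseteq q$ in $\bb{P}(T)$ if and only if $I(q) \subseteq I(p)$ in $\X(\A^T)$. The second task is what upgrades the bijection to a poset isomorphism onto the order-dual $\bb{P}(T)^\partial$.

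The forward direction of the order statement, $p \subseteq q \Rightarrow I(q) \subseteq I(p)$, is precisely Proposition \ref{behavior of ideals in A^T}(3). Both injectivity and the reverse direction of the order statement can then be handled by a single witness construction, which is where essentially all of the work lies. Suppose $p, q \in \bb{P}(T)$ with $p \not\subseteq q$. Then there is some vertex $\alpha$ lying on $p$ but not on $q$ (whether the paths are finite or infinite, ``$p$ is not a prefix of $q$'' produces such a vertex). Define $\u \in \A^T$ by $u_\alpha = 1$ and $u_\beta = 0$ for all other vertices $\beta$. This $\u$ is a legitimate element of $\A^T$: it has finitely many nonzero entries, and the lone nonzero entry is positive, so the first nonzero entry along any root-based path through $\alpha$ is positive. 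By construction $\u_q = \0_q$, giving $\u \in I(q)$, while $u_\alpha = 1 \neq 0$ with $\alpha \in p$ shows $\u \notin I(p)$.

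This construction does everything remaining. For injectivity, if $p \neq q$ then at least one of $p \not\subseteq q$ or $q \not\subseteq p$ holds; the witness from that direction lies in exactly one of $I(p), I(q)$, proving $I(p) \neq I(q)$. For the reverse direction of the order statement, the same construction shows that whenever $p \not\subseteq q$ we have $I(q) \not\subseteq I(p)$, i.e.\ the contrapositive of $I(q) \subseteq I(p) \Rightarrow p \subseteq q$. Combining both directions yields the desired equivalence $p \subseteq q \Longleftrightarrow I(q) \subseteq I(p)$, and hence $\phi$ is an isomorphism $\bb{P}(T)^\partial \to \X(\A^T)$ of posets. I do not anticipate a serious obstacle here: the main conceptual point is already packaged in Theorems \ref{ideals in A^T} and \ref{prime ideals in A^T}, and the witness argument above is essentially a one-line construction.
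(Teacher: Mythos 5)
Your proposal is correct and follows essentially the same route as the paper: the same map $p\mapsto I(p)$, surjectivity from Theorem \ref{prime ideals in A^T}, the forward order implication from Proposition \ref{behavior of ideals in A^T}(3), and the identical witness element (zero everywhere except a single $1$ at a vertex of $p$ off $q$) to handle both injectivity and the reverse implication. No gaps; the only cosmetic difference is that you organize the witness argument as one construction serving both purposes, where the paper states it twice.
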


\begin{proof} Define a map $\phi\colon\bb{P}(T)\to \X(\A^T)$ by $\phi(p)=I(p)$. If $p$ and $q$ are two root-based paths with $p\subseteq q$, then $I(p)\supseteq I(q)$ by Proposition \ref{behavior of ideals in A^T}(3), and so $\phi(p)\supseteq \phi(q)$.

On the other hand, suppose $\phi(q)\subseteq \phi(p)$. If $p\not\subseteq q$ then there is a vertex $\alpha$ along $p$ which is not on the path $q$. Define $\u\in\A^T$ to be zero everywhere except $u_\alpha=1$. Then $\u_q=\0_q$ and so $\u\in I(q)$, but $\u\notin I(p)$, meaning $I(q)\not\subseteq I(p)$, a contradiction. Thus, we must have $p\subseteq q$.

The argument above can be modified slightly to show that $\phi$ is injective: if $p\neq q$, then $I(p)\neq I(q)$. Finally, this map is surjective by Theorem \ref{prime ideals in A^T}. Hence, $\phi$ is an order-anti-isomorphism.
\end{proof}

\begin{corollary}\label{X(A^T) = T^d} If $T$ is a finite rooted tree, then $\X(\A^T)\cong T^\partial$ as posets. 
\end{corollary}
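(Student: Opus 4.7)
The plan is to compose the order-anti-isomorphism of Proposition \ref{X(A^T)=P(T)} with an order isomorphism $\bb{P}(T) \cong T$ that is available when $T$ is finite. Since finiteness forces every root-based path in $T$ to terminate at some vertex (there are no infinite-length paths as appeared in the proof of Theorem \ref{prime ideals in A^T}), each root-based path has the form $[\lambda,\alpha]$ for a unique $\alpha \in V(T)$.

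First I would define $\psi\colon \bb{P}(T) \to V(T)$ by $\psi([\lambda,\alpha]) = \alpha$. Finiteness of $T$ makes $\psi$ a bijection: the endpoint of a root-based path is uniquely determined, and conversely every vertex $\alpha$ is the endpoint of the unique path $[\lambda,\alpha]$. Next I would verify that $\psi$ is an order isomorphism between $(\bb{P}(T), \subseteq)$ and $(V(T), \leq_T)$. For two root-based paths $p = [\lambda,\alpha]$ and $q = [\lambda,\beta]$, the statement ``$p$ is a prefix of $q$'' is by definition equivalent to $\alpha \leq_T \beta$, so $\psi$ and $\psi^{-1}$ are both order-preserving.

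Then the corollary follows by composition: Proposition \ref{X(A^T)=P(T)} gives an order-anti-isomorphism $\X(\A^T) \cong \bb{P}(T)^\partial$, and the order isomorphism $\psi$ lifts to an order isomorphism $\bb{P}(T)^\partial \cong V(T)^\partial = T^\partial$. Stringing these together yields $\X(\A^T) \cong T^\partial$ as posets, as desired.

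There is essentially no obstacle here; the only subtlety is noting explicitly where finiteness of $T$ is used, namely to exclude the ``infinite-length root-based paths'' that appeared in the proof of Theorem \ref{prime ideals in A^T}. Without finiteness, $\bb{P}(T)$ would strictly contain $V(T)$, so the correspondence with $T^\partial$ breaks.
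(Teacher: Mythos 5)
Your proposal is correct and follows essentially the same route as the paper: both identify $\bb{P}(T)$ with $V(T)$ via terminal vertices of root-based paths (the paper's $\psi$ is just the inverse of yours) and then compose with the order-anti-isomorphism of Proposition \ref{X(A^T)=P(T)}. Your explicit remark on where finiteness is used is a nice touch but does not change the argument.
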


\begin{proof} Suppose $T$ is finite. Then any root-based path in $T$ is finite and hence determined by its terminal vertex. Define $\psi\colon T\to \bb{P}(T)$ by $\psi(\alpha)=[\lambda, \alpha]$. That this is a bijection is straightforward, and certainly $\alpha\leq_T \beta$ if and only if $[\lambda, \alpha]\subseteq [\lambda, \beta]$, meaning $\psi$ is an order-isomorphism. But then $T^\partial \cong \bb{P}(T)^\partial\cong \X(\A^T)$ by Proposition \ref{X(A^T)=P(T)} above.
\end{proof}


\subsection{Examples}

\begin{example}\label{chain of length n} Let $\ch_n$ denote the chain of length $n-1$ viewed as a rooted tree. So $\ch_n$ has $n$ vertices. The algebra $\A^{\ch_n}$ is a cBCK-chain, so the ideals are linearly ordered. That is, the ideal lattice $\id(\A^{\ch_n})$ is itself a chain and the prime ideals are exactly the proper ideals. From Theorem \ref{ideals in A^T} we see that $\A^{\ch_n}$ has $n+1$ ideals, and thus it has $n$ prime ideals. That is, the chain $\X(\A^{\ch_n})$ is isomorphic to the $n$-element chain, $\n$, which we could also see immediately from Corollary \ref{X(A^T) = T^d}.
\end{example}

\begin{example}\label{countable chain} Let $\ch_\infty$ denote a rooted tree that is a countably infinite chain. As in the previous example, the algebra $\A^{\ch_\infty}$ is a cBCK-chain and therefore $\id(\A^{\ch_\infty})$ is itself a chain and $\X(\A^{\ch_\infty})=\id(\A^{\ch_\infty})\setminus \{\A^{\ch_\infty}\}$. For a root-based path $p$ in $\ch_\infty$, let $\ell(p)$ denote the length of $p$. Let $\bb{N}_0^\infty=\bb{N}_0\cup\{\infty\}$, where $k<\infty$ for all $k\in\bb{N}_0$, and note that $\ell\colon \bb{P}(\ch_\infty)\to \bb{N}_0^\infty$ is an order-isomorphism. Hence, by Proposition \ref{X(A^T)=P(T)} we have $\X(\A^{\ch_\infty})\cong (\bb{N}_0^\infty)^\partial$ as posets. This algebra has the peculiar property that $\id(\A^{\ch_\infty})\cong (\bb{N}_0^\infty)^\partial$ as well.

\end{example}

In the following examples, we adopt the notation $I_R$ in place of $I(R)$ for $R\subseteq \bb{P}(T)$. In particular, for an interval $[\lambda, \alpha]$, the ideal $I(\alpha)$ will be denoted $I_\alpha$.

\begin{example}\label{T_2} Figure \ref{fig:tree2} shows a tree we will call $T_2$ and the Hasse diagram for the ideals of $\A^{T_2}$, with $\X(\A^{T_2})$ indicated in red, obtained by applying Theorem \ref{ideals in A^T} and Proposition \ref{prime ideals in A^T}.
\begin{figure}[h]
\centering
\begin{tikzpicture}

\filldraw (0,0) circle (2pt);
\filldraw (-.75,1) circle (2pt);
\filldraw (.75,1) circle (2pt);

\draw [-] (-.75,1) -- (0,0);
\draw [-] (.75,1) -- (0,0); 

	\node at (0,-.3) {\small $\lambda$};
	\node at (-.75, 1.3) {\small $\alpha_1$};
	\node at (.75, 1.3) {\small $\alpha_2$};	
		
\end{tikzpicture}
\hspace{1cm}
\begin{tikzpicture}

\filldraw (0,0) circle (2pt);
\filldraw[red] (-.75,1) circle (2pt);
\filldraw[red] (.75,1) circle (2pt);
\filldraw[red] (0,2) circle (2pt);
\filldraw (0,3) circle (2pt);

\draw [-] (-.75,1) -- (0,0);
\draw [-] (.75,1) -- (0,0); 
\draw [-,red] (-.75,1) -- (0,2);
\draw [-,red] (.75,1) -- (0,2); 
\draw [-] (0,2) -- (0,3);

	\node at (0,-.4) {\small $\{\0\}$};
	\node at (-1.2, 1) {\small $I_{\alpha_1}$};
	\node at (1.2, 1) {\small $I_{\alpha_2}$};
	\node at (.35,2.1) {\small $I_\lambda$};
	\node at (0, 3.3) {\small $\A^{T_2}$};	
	
\end{tikzpicture}
\caption{The tree $T_2$ and $\id(\A^{T_2})$}\label{fig:tree2}
\end{figure}
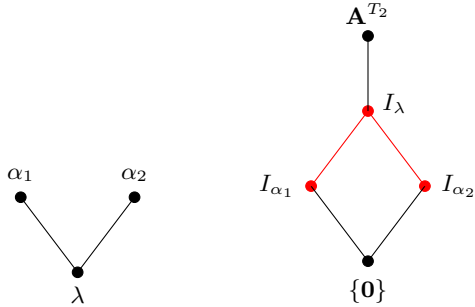 

Similarly, consider $T_3$ as in Figure \ref{fig:tree3}. For notational brevity, let $I_j=I_{\alpha_j}$ and $I_{jk}=I_{\alpha_j}\cap I_{\alpha_k}$. Similar computations give the Hasse diagram for $\id(\A^{T_3})$, where again  $\X(\A^{T_3})$ is indicated in red.
\begin{figure}[h]
\centering
\begin{tikzpicture}

\filldraw (0,0) circle (2pt);
\filldraw (0,1) circle (2pt);
\filldraw (-1,1) circle (2pt);
\filldraw (1,1) circle (2pt);

\draw [-] (-1,1) -- (0,0);
\draw [-] (1,1) -- (0,0); 
\draw [-] (0,1) -- (0,0);

	\node at (0,-.3) {\small $\lambda$};
	\node at (-1, 1.3) {\small $\alpha_1$};
	\node at (0,1.3) {\small $\alpha_2$};
	\node at (1, 1.3) {\small $\alpha_3$};	
	
\end{tikzpicture}
\hspace{1cm}
\begin{tikzpicture}[scale=1.2]

\node[circle,fill=black,inner sep=0pt,minimum size=4pt] (min) at (0,0) {};
\node[circle,fill=black,inner sep=0pt,minimum size=4pt] (b) at (0,1) {};
\node[circle,fill=red,inner sep=0pt,minimum size=4pt] (e) at (0,2) {};
\node[circle,fill=red,inner sep=0pt,minimum size=4pt] (g) at (0,3) {};
\node[circle,fill=black,inner sep=0pt,minimum size=4pt] (max) at (0,4) {};
\node[circle,fill=black,inner sep=0pt,minimum size=4pt] (a) at (-1,1) {};
\node[circle,fill=black,inner sep=0pt,minimum size=4pt] (c) at (1,1) {};
\node[circle,fill=red,inner sep=0pt,minimum size=4pt] (d) at (-1,2) {};
\node[circle,fill=red,inner sep=0pt,minimum size=4pt] (f) at (1,2) {};
  \draw (d) -- (a) -- (min) -- (b);
  \draw (f) -- (c) -- (min);
  \draw (a) -- (e) -- (c);
  \draw[preaction={draw=white, -,line width=6pt}] (d) -- (b) -- (f);
  \draw (g) -- (max);
  \draw[-,red] (d)--(g) -- (f);
  \draw[-,red] (g)--(e);
  
	\node at (0,-.4) {\small $\{\0\}$};
	\node at (-1.2, .8) {\small $I_{12}$};
	\node at (1.3, .8) {\small $I_{23}$};
	\node at (.35,.8) {\small $I_{13}$};
	\node at (-1.25, 2) {\small $I_1$};
	\node at (1.25, 2) {\small $I_3$};
	\node at (.35, 2) {\small $I_2$};
	\node at (.35,3) {\small $I_\lambda$};
	\node at (0,4.3) {\small $\A^{T_3}$};
\end{tikzpicture}
\caption{The tree $T_3$ and $\id(\A^{T_3})$}\label{fig:tree3}
\end{figure}
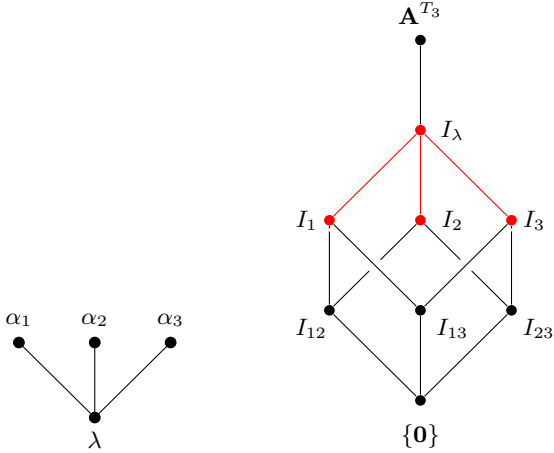 
\end{example}

\begin{example}\label{A^H} Figure \ref{fig:H} shows a tree with height 2; call this tree $H$.
\begin{figure}[h]
\centering
\begin{tikzpicture}

\filldraw (0,0) circle (2pt);
\filldraw (.5,1) circle (2pt);
\filldraw (-.5,1) circle (2pt);
\filldraw (0,2) circle (2pt);
\filldraw (1,2) circle (2pt);

\draw [-] (.5,1) -- (0,0);
\draw [-] (-.5,1) -- (0,0); 
\draw [-] (.5,1) -- (0,2);
\draw [-] (.5,1) -- (1,2);

	\node at (0,-.3) {\small $\lambda$};
	\node at (-.8, 1) {\small $\alpha$};
	\node at (.8, 1) {\small $\beta$};
	\node at (0, 2.3) {\small $\gamma$};
	\node at (1, 2.3) {\small $\delta$};
		
\end{tikzpicture}
\caption{The tree $H$}
\end{figure}
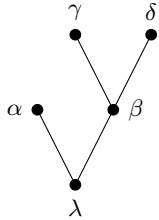 As in the above examples, Theorem \ref{ideals in A^T} tells us that the ideals of $\A^H$ are essentially determined by non-empty subsets of $V(T)$, but in this example we also have $\beta\in[\lambda,\gamma]\cap[\lambda,\delta]$. This reduces some of the possibilities for ideals. Similar to the previous example, write $I_{xy}$ to mean $I_x\cap I_y$. The Hasse diagram  for $\id(\A^H)$ is shown in Figure \ref{fig:idAH}, where again $\X(\A^H)$ is indicated in red.
\begin{figure}
\centering
\begin{tikzpicture}[scale=1.2]
\node[circle,fill=black,inner sep=0pt,minimum size=4pt] (min) at (0,0) {};
\node[circle,fill=black,inner sep=0pt,minimum size=4pt] (b) at (0,1) {};
\node[circle,fill=red,inner sep=0pt,minimum size=4pt] (e) at (0,2) {};
\node[circle,fill=red,inner sep=0pt,minimum size=4pt] (g) at (0,3) {};
\node[circle,fill=black,inner sep=0pt,minimum size=4pt] (a) at (-1,1) {};
\node[circle,fill=black,inner sep=0pt,minimum size=4pt] (c) at (1,1) {};
\node[circle,fill=black,inner sep=0pt,minimum size=4pt] (d) at (-1,2) {};
\node[circle,fill=red,inner sep=0pt,minimum size=4pt] (f) at (1,2) {};

\node[circle,fill=red,inner sep=0pt,minimum size=4pt] (i) at (-1,3) {};
\node[circle,fill=red,inner sep=0pt,minimum size=4pt] (j) at (-.5,3.5) {};
\node[circle,fill=black,inner sep=0pt,minimum size=4pt] (k) at (-.5,4.3) {};

  \draw (d) -- (a) -- (min) -- (b);
  \draw (f) -- (c) -- (min);
  \draw (a) -- (e) -- (c);
  \draw[preaction={draw=white, -,line width=6pt}] (d) -- (b) -- (f);
  \draw[-] (d)--(g);
  \draw[-,red] (g)--(e);
  \draw[-,red] (g)--(f);
  \draw[-,red] (g)--(j);
  \draw[-,red] (i)--(j);
  \draw[-] (j)--(k);
  \draw[-] (i)--(d);
  
	\node at (0,-.4) {\small $\{\0\}$};
	\node at (-1.2, .8) {\small $I_{\alpha\gamma}$};
	\node at (-1.25, 3) {\small $I_{\alpha}$};
	\node at (-.2, 3.5) {\small $I_{\lambda}$};
	\node at (1.3, .8) {\small $I_{\gamma\delta}$};
	\node at (.35,.8) {\small $I_{\alpha\delta}$};
	\node at (-1.35, 2) {\small $I_{\alpha\beta}$};
	\node at (1.25, 2) {\small $I_\delta$};
	\node at (.35, 2) {\small $I_\gamma$};
	\node at (.35,3) {\small $I_\beta$};
	\node at (-.5,4.6) {\small $\A^H$};
\end{tikzpicture}
\caption{The ideals of $\A^H$}\label{fig:idAH}
\end{figure}
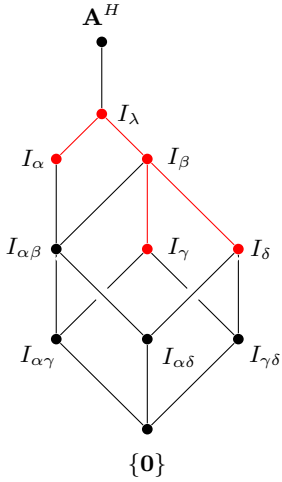

\end{example}


\subsection{The algebras $\A^{T_n}$}

Let $T_n$ denote the rooted tree of height one with $n$ leaves as shown in Figure \ref{fig:tree n}.
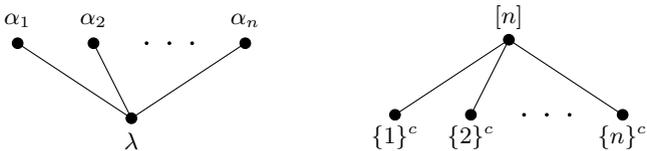
\begin{figure}[b]
\centering
\begin{tikzpicture}

\filldraw (0,0) circle (2pt);
\filldraw (-1.5,1) circle (2pt);
\filldraw (-.5,1) circle (2pt);
\filldraw (.2,1) circle (.5pt);
\filldraw (.5,1) circle (.5pt);
\filldraw (.8,1) circle (.5pt);
\filldraw (1.5,1) circle (2pt);

\draw [-] (-1.5,1) -- (0,0);
\draw [-] (-.5,1) -- (0,0); 
\draw [-] (1.5,1) -- (0,0);

	\node at (0,-.3) {\small $\lambda$};
	\node at (-1.5, 1.3) {\small $\alpha_1$};
	\node at (-.5, 1.3) {\small $\alpha_2$};
	\node at (1.5, 1.3) {\small $\alpha_n$};	
	
\end{tikzpicture}
\hspace{1cm}
\begin{tikzpicture}

\filldraw (0,1) circle (2pt);
\filldraw (-1.5,0) circle (2pt);
\filldraw (-.5,0) circle (2pt);
\filldraw (.2,0) circle (.5pt);
\filldraw (.5,0) circle (.5pt);
\filldraw (.8,0) circle (.5pt);
\filldraw (1.5,0) circle (2pt);

\draw [-] (-1.5,0) -- (0,1);
\draw [-] (-.5,0) -- (0,1); 
\draw [-] (1.5,0) -- (0,1);

	\node at (0,1.3) {\small $[n]$};
	\node at (-1.5, -.3) {\small $\{1\}^c$};
	\node at (-.5, -.3) {\small $\{2\}^c$};
	\node at (1.5, -.3) {\small $\{n\}^c$};	
	
\end{tikzpicture}
\caption{The tree $T_n$ and the poset $\text{MI}(\overline{\bb{B}}_n)$}\label{fig:tree n}
\end{figure} Our objective is to generalize the observations made in the examples of \ref{T_2}. To that end, let $\bb{B}_n$ denote the powerset of $\{1,2,\ldots, n\}$; this is the unique (up to isomorphism) finite Boolean algebra with $n$ atoms. We will let $\overline{\bb{B}}_n$ denote $\bb{B}_n\oplus \1$, which is $\bb{B}_n$ with a new top element $\1$ adjoined. So $S<\1$ for all $S\in \bb{B}_n$. The poset of meet-irreducible elements of $\overline{\bb{B}}_n$ is also shown in Figure \ref{fig:tree n}.

\begin{remark} In the literature, pseudocomplemented lattices are sometimes referred to as p-algebras. Theorem 2 of  Lakser's paper \cite{lakser71} characterizes algebras of the form $\overline{\bb{B}}_n$ for $n\in\bb{N}$ as precisely the finite subdirectly irreducible distributive p-algebras. 
\end{remark}

For a lattice $\D$, let $\text{MI}(\D)$ denote the poset of meet-irreducible elements in $\D$. It is a theorem of Birkhoff that a finite poset $(P,\leq)$ uniquely determines (up to isomorphism) a finite distributive lattice $\D$ such that $P\cong \text{MI}(\D)$ as posets; see \cite{birkhoff67}.

\begin{theorem}\label{ideals of A^T_n form a p-alg} As lattices, $\id(\A^{T_n})\cong \overline{\bb{B}}_n$.
\end{theorem}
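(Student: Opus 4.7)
My plan is to enumerate the ideals of $\A^{T_n}$ directly via Theorem \ref{ideals in A^T}, collapse the redundant descriptions, and then exhibit an order-isomorphism with $\overline{\bb{B}}_n$.

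The root-based paths in $T_n$ are exactly $\{\lambda\}$ together with the $n$ paths $[\lambda,\alpha_i]$ for $i=1,\dots,n$, so by Theorem \ref{ideals in A^T} every ideal of $\A^{T_n}$ has the form $I(R)$ for some $R\subseteq\bb{P}(T_n)$. Since $\{\lambda\}\subseteq[\lambda,\alpha_i]$ as paths, Proposition \ref{behavior of ideals in A^T}(3) gives $I_{\alpha_i}\subseteq I_\lambda$, so the path $\{\lambda\}$ may be deleted from $R$ whenever $R$ already contains at least one $[\lambda,\alpha_i]$. Combined with Proposition \ref{behavior of ideals in A^T}(6), the distinct ideals of $\A^{T_n}$ are therefore $\A^{T_n}=I(\emptyset)$, $I_\lambda=I(\{\{\lambda\}\})$, and the $2^n-1$ ideals $J_T:=\bigcap_{i\in T}I_{\alpha_i}$ indexed by non-empty $T\subseteq\{1,\dots,n\}$. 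This is $2^n+1$ ideals in total, matching $|\overline{\bb{B}}_n|$.

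I would then define $f\colon\overline{\bb{B}}_n\to\id(\A^{T_n})$ by $f(\1)=\A^{T_n}$, $f(\{1,\dots,n\})=I_\lambda$, and $f(S)=J_{\{1,\dots,n\}\setminus S}$ for every proper subset $S\subsetneq\{1,\dots,n\}$; in particular $f(\emptyset)=\bigcap_{i}I_{\alpha_i}=\{\0\}$. Surjectivity is immediate from the enumeration, and order-preservation ($S\subseteq S'\Rightarrow f(S)\subseteq f(S')$) follows from monotonicity of intersection together with the fact that $J_T\subseteq I_\lambda\subsetneq\A^{T_n}$ for every non-empty $T$. For the converse, if $S,S'\in\bb{B}_n$ and $k\in S\setminus S'$, then $k\in\{1,\dots,n\}\setminus S'$ so $f(S')\subseteq I_{\alpha_k}$; however the element $\u\in\A^{T_n}$ defined by $u_{\alpha_k}=1$ and all other entries $0$ lies in $f(S)$ but not in $I_{\alpha_k}$, hence not in $f(S')$. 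Boundary cases involving $\1$ are immediate. A bijective order-isomorphism between lattices is a lattice isomorphism, completing the proof.

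The only genuinely delicate point is the bookkeeping that identifies which collections $R$ produce the same ideal: once one recognizes that $\{\lambda\}$ is absorbed by any longer root-based path, the parameterization of ideals by non-empty subsets of leaves (plus the two extra ideals $\A^{T_n}$ and $I_\lambda$ corresponding to $\1$ and $\{1,\dots,n\}$ in $\overline{\bb{B}}_n$) is essentially forced, and the rest of the argument reduces to direct applications of results already established.
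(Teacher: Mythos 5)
Your proof is correct, but it takes a genuinely different route from the paper. The paper's argument is indirect: it invokes Corollary \ref{X(A^T) = T^d} to get $\X(\A^{T_n})\cong T_n^\partial$ as posets, observes that the prime ideals are exactly the meet-irreducible elements of the distributive lattice $\id(\A^{T_n})$, notes that $\text{MI}(\overline{\bb{B}}_n)\cong T_n^\partial$ as well, and concludes by Birkhoff's representation theorem that the two finite distributive lattices must be isomorphic. You instead enumerate the ideals directly from Theorem \ref{ideals in A^T}, collapse the redundancy coming from the root path $\{\lambda\}$ being a prefix of every $[\lambda,\alpha_i]$, and build an explicit order-isomorphism $f\colon\overline{\bb{B}}_n\to\id(\A^{T_n})$; your order-reflection argument (using the element $\u$ supported at a single leaf $\alpha_k$) also establishes injectivity, so the distinctness of the $2^n+1$ ideals is genuinely proved rather than assumed. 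What your approach buys is an explicit description of the isomorphism, which the Birkhoff argument does not provide; what the paper's approach buys is brevity and immediate generalization to arbitrary finite rooted trees (it is reused verbatim in Theorem \ref{culmination}(1)), whereas your enumeration exploits the very special path structure of $T_n$ and would require redoing the bookkeeping for other trees. Both arguments are sound applications of results already established in the paper.
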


\begin{proof}
Since $T_n$ is a finite rooted tree we have $\X(\A^{T_n})\cong T_n^\partial$ as posets by Corollary \ref{X(A^T) = T^d}, and so $\text{MI}(\overline{\bb{B}}_n)\cong T_n^\partial\cong \X(\A^{T_n})$. However, $\X(\A^{T_n})$ is precisely the set of meet-irreducible elements of $\id(\A^T)$, and we know that $\id(\A^T)$ is a distributive lattice. By Birkhoff's theorem we therefore have $\id(\A^{T_n})\cong \overline{\bb{B}}_n$
\end{proof}


\section{Spectra}

In this section we consider spectra of cBCK-algebras. Pa\l asinski first explored a topological representation for cBCK-algebras in \cite{palasinski82}. Hoo and Murty defined the spectrum of a bounded cBCK-algebra in \cite{hoo87}; the topological space they define is not the same as Pa\l asinski's. The spectrum as defined by Hoo and Murty became the standard. Later Meng and Jun proved in \cite{MJ98} that the spectrum of a bounded cBCK-algebra is a spectral space. Aslam, Deeba, and Thaheem studied spectra of cBCK-algebras both with and without the assumption boundedness in \cite{ADT93}. The present work should be viewed as a continuation of the work in \cite{ADT93}, as well as a generalization of the work in \cite{MJ98}. In particular, we will show that the spectrum of any cBCK-algebra is a locally compact generalized spectral space, with compactness if and only if the algebra is finitely generated as an ideal. We go on to show that when the algebra is involutory, whether bounded or not, the spectrum is a Priestley space.

Let $\A$ be a cBCK-algebra. For $S\subseteq\A$, define \[\sigma(S)=\{P\in\X(\A)\mid S\not\subseteq P\}\,.\] We will write $\sigma(a)$ for $\sigma(\{a\})$. It is straightforward to show that, for any $S\subseteq \A$, we have $\sigma(S)=\sigma\bigl((S]\bigr)$. In particular $\sigma(a)=\sigma\bigl((a]\bigr)$. Another computation gives $\sigma(a)\cap\sigma(b) = \sigma(a\meet b)$ for $a,b\in \A$.

The collection \[\mc{T}(\A)=\{\sigma(I)\mid I\in\id(\A)\}\] is a topology on $\X(\A)$, and the set \[\mc{T}_0(\A)=\{\sigma(a)\mid a\in\A\}\] is a basis. There are several proofs of this in the literature, but we point the reader to Proposition 3.1 of \cite{ADT93}.

\begin{remark}\label{sigma is isom} In \cite{ADT93} it is also shown that the map $\sigma:\id(\A)\to\mc{T}(\A)$ is a lattice isomorphism. This gives an alternate proof that $\mt{cBCK}$ is a congruence-distributive variety since we now have $\mc{T}(\A)\cong\id(\A)\cong\con(\A)$ and any topology forms a distributive lattice.\end{remark}

\begin{definition} The space $\bigl(\X(\A)\,,\, \mc{T}(\A)\bigr)$ is the \textit{spectrum} of $\A$.\end{definition}

For a topological space $X$, denote the collection of compact open subsets of $X$  by $\K(X)$. We say a topological space $X$ is \textit{quasi-sober} if every non-empty irreducible closed subset is the closure of a point, and \textit{sober} if that point is unique. It is well-known that a space is sober if and only if it is both $T_0$ and quasi-sober.

A topological space $X$ is called a \textit{spectral space} if $X$ is homeomorphic to the spectrum of some commutative ring. Hochster provided the following characterization of spectral spaces in his PhD thesis.

\begin{theorem}[\cite{hochster69}]\label{hochster} A space $X$ is spectral if and only if the following conditions are satisfied:
\begin{enumerate}
\item[(H1)] $X$ is compact
\item[(H2)] $X$ is $T_0$
\item[(H3)] $\K(X)$ is a basis and closed under finite intersections
\item[(H4)] $X$ is quasi-sober.
\end{enumerate} 
\end{theorem}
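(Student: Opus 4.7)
The plan is to prove both directions of the equivalence. The forward direction ($\Rightarrow$) amounts to verifying that the Zariski spectrum $\mathrm{Spec}(R)$ of any commutative ring satisfies (H1)--(H4). This is largely standard: quasi-compactness of $\mathrm{Spec}(R)$ follows from the fact that any cover by basic opens $D(f_i)=\{\mathfrak{p}:f_i\notin\mathfrak{p}\}$ admits a finite subcover via a partition-of-unity argument $1=\sum r_if_i$; the $T_0$ property holds because distinct primes differ in at least one ring element; the basic opens $D(f)$ are compact with $D(f)\cap D(g)=D(fg)$, which gives (H3); and sobriety follows from the bijection $\mathfrak{p}\leftrightarrow V(\mathfrak{p})$ between primes and irreducible closed subsets.

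For the reverse direction ($\Leftarrow$), I would proceed in two stages. First, observe that $L:=\K(X)$ is a bounded distributive lattice by (H1) and (H3), with meets given by intersection and joins by union (compact by (H1)). The assignment $x\mapsto\{U\in L:x\in U\}$ embeds $X$ into the space of prime filters of $L$ with its hull-kernel topology; injectivity uses (H2), while the observation that every prime filter arises from a unique point uses quasi-sobriety (H4) together with the bijection between irreducible closed subsets of $X$ and prime filters of $L$. This identifies $X$ homeomorphically with a purely lattice-theoretic ``spectrum'' $\mathrm{Spec}(L)$.

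The main obstacle---and the substantive content of the theorem---is the second stage: realising $L$ as $\K(\mathrm{Spec}(R))$ for some commutative ring $R$. Hochster's approach in \cite{hochster69} builds $R$ explicitly by taking a polynomial ring over $\mathbb{Z}$ with one indeterminate per element of $L$ and quotienting by carefully chosen relations encoding the lattice operations; a more modern alternative expresses $L$ as a filtered colimit of finitely presented bounded distributive lattices and takes the corresponding filtered colimit of finitely presented commutative rings, each having a Noetherian (hence tractable) spectrum. Either way, one verifies that the compact opens of $\mathrm{Spec}(R)$---which correspond to finitely generated radical ideals---match $L$ on the nose. Combined with the first stage, this yields the desired homeomorphism $\mathrm{Spec}(R)\cong X$.
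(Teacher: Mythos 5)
The paper does not prove this statement: it is quoted as Hochster's characterization of spectral spaces and attributed directly to \cite{hochster69}, so there is no internal proof to compare yours against. Judged on its own terms, your outline follows the standard architecture and contains no wrong turns. The forward direction is routine and you handle it correctly (quasi-compactness of $\mathrm{Spec}(R)$ via $1=\sum r_if_i$, the $T_0$ property by separating distinct primes with a ring element, $D(f)\cap D(g)=D(fg)$ with each $D(f)$ compact, and quasi-sobriety via the correspondence $\mathfrak{p}\leftrightarrow V(\mathfrak{p})$). Your first stage of the converse is also correct and standard: (H1) and (H3) make $L=\K(X)$ a bounded distributive lattice, (H2) gives injectivity of $x\mapsto\{U\in L: x\in U\}$, and (H4) lets you recover every prime filter of $L$ from the generic point of an associated irreducible closed set, identifying $X$ with the Stone dual of $L$.

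The caveat is that your second stage, which is where essentially all of the depth of the theorem lives, is described rather than proved: you point at Hochster's construction (or a filtered-colimit alternative) without carrying out either, and nothing in your write-up actually establishes that an arbitrary bounded distributive lattice is isomorphic to $\K(\mathrm{Spec}(R))$ for some commutative ring $R$. The ``carefully chosen relations'' and the verification that the compact opens match $L$ are precisely the hard part, and Hochster's own argument is considerably more involved than the sketch suggests (the paper itself points to \cite{lewis73}, \cite{ershov05}, and \cite{tedd16} for the difficulty of such ring constructions). Since the paper uses the theorem as a black box, deferring this step to the literature is defensible, but you should flag explicitly that it is being assumed rather than proved. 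One minor precision point: the compact opens of $\mathrm{Spec}(R)$ are the finite unions of basic opens $D(f)$, i.e., complements of $V(I)$ for finitely generated ideals $I$; these are classified by radicals of finitely generated ideals, which is presumably what you intend by ``correspond to finitely generated radical ideals.''
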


\begin{remark} The term \textit{multiplicative basis} is sometimes used when a space satisfies (H3). We will use this terminology.
\end{remark}

\begin{remark} As mentioned at the beginning of this section, Meng and Jun proved in \cite{MJ98} that $\X(\A)$ is a spectral space for any \textit{bounded} cBCK-algebra $\A$. Confusingly, in their paper they use the term ``Stone space'' instead of ``spectral space.'' This use of terminology seems to come from Balbes and Dwinger's text \cite{balbes11}. In the literature, ``Stone space'' typically refers to a topological space which is compact, Hausdorff, and totally disconnected. The spectrum of a commutative ring $R$ -- by definition a spectral space -- is rarely Hausdorff: the closed points in $\text{Spec}(R)$ with respect to the Zariski topology are precisely the maximal ideals of $R$. The same is true for the spectrum of a cBCK-algebra as well. 

Despite this bit of confusion, the theorem of Meng and Jun points at a connection between bounded cBCK-algebras and commutative rings. Namely, if $\A$ is a bounded cBCK-algebra then $\X(\A)\simeq \text{Spec}(R)$ for some commutative ring $R$. In general, constructing such a ring $R$ is a rather complicated process. We refer the reader to Hochster's original 1969 paper \cite{hochster69}, the papers by Lewis \cite{lewis73} and Ershov \cite{ershov05} which discuss alternate constructions in the finite setting, or the very readable thesis by Tedd \cite{tedd16} which compares and generalizes the various constructions.
\end{remark}

Let us note here that $\sigma(a)$ is compact open in $\X(\A)$ for any $a\in \A$. We point to Corollary 4 of \cite{MJ98} for the proof, which does not require $\A$ to be bounded. So if $\A$ is bounded with upper bound 1, say, then $\sigma(1)=\X(\A)$ since any ideal containing 1 cannot be proper. In this case, $\X(\A)$ is compact. 

What happens to the spectrum $\X(\A)$ when $\A$ is not assumed to be bounded? We will see in Example \ref{noncompact example} that compactness can fail. On the other hand boundedness is not necessary for compactness: any finite spectrum $\X(\A)$ is trivially compact whether $\A$ is bounded or not. If nothing else, local compactness is always guaranteed.

\begin{corollary}\label{X is LC} For any cBCK-algebra $\A$, the spectrum $\X(\A)$ is locally compact.
\end{corollary}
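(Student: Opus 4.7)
The plan is to exhibit, for every point $P \in \X(\A)$, a compact neighborhood; in fact, I will show the stronger statement that every point admits a neighborhood base of compact open sets. The two ingredients I need are already in place in the excerpt: the family $\mc{T}_0(\A) = \{\sigma(a) \mid a \in \A\}$ is a basis for the topology on $\X(\A)$, and every basic open set $\sigma(a)$ is compact (as noted just before the statement, citing Corollary 4 of \cite{MJ98}).

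Given $P \in \X(\A)$, prime ideals are proper by definition, so $P \neq \A$. Hence there exists some $a \in \A \setminus P$, which means $P \in \sigma(a)$. Since $\sigma(a) \in \mc{T}_0(\A)$ is a basic open set that is also compact, $\sigma(a)$ is a compact open neighborhood of $P$. This already gives local compactness.

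More generally, if $U$ is any open neighborhood of $P$, then because $\mc{T}_0(\A)$ is a basis there is some $b \in \A$ with $P \in \sigma(b) \subseteq U$, and $\sigma(b)$ is compact open. So each point has a neighborhood base of compact opens, giving local compactness in the strong sense. There is no real obstacle here: the work has all been done upstream, and the corollary is essentially the observation that a space with a basis of compact opens is locally compact, combined with the fact that proper prime ideals are proper.
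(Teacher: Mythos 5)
Your proof is correct and takes essentially the same route as the paper: both observe that every prime ideal $P$, being proper, lies in some basic open $\sigma(a)$, and that each $\sigma(a)$ is compact by the result cited from \cite{MJ98}. Your version just spells out the properness argument and the neighborhood-base refinement slightly more explicitly.
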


\begin{proof} Every prime ideal is contained in $\sigma(a)$ for some $a$ since these sets form a basis, and the $\sigma(a)$'s are compact.
\end{proof}

Here we provide necessary and sufficient conditions for compactness of $\X(\A)$.

\begin{theorem}\label{cpct iff} The space $\X(\A)$ is compact if and only if $\A$ is finitely generated as an ideal. 
\end{theorem}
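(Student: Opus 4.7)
The proof should be a short application of the lattice isomorphism $\sigma\colon\id(\A)\to \mc{T}(\A)$ recorded in Remark \ref{sigma is isom}, together with the fact (cited from \cite{MJ98}) that each basic open $\sigma(a)$ is compact. The plan is to show that covering $\X(\A)$ by basic opens corresponds, via $\sigma$, to writing $\A$ as the join of principal ideals, i.e.\ to finitely generating $\A$.

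First I would record the key computation: for any $a_1,\dots,a_n\in \A$,
\[\sigma(a_1)\cup\cdots\cup\sigma(a_n)=\sigma\bigl((a_1,\dots,a_n]\bigr),\]
which is immediate from the definition of $\sigma$ (a prime $P$ misses some $a_i$ iff it fails to contain the ideal generated by the $a_i$'s) and also a special case of $\sigma$ being a lattice isomorphism. I would also note that $\sigma(\A)=\X(\A)$, because every prime ideal is proper and therefore omits at least one element of $\A$.

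For the backward direction, suppose $\A=(a_1,\dots,a_n]$. Then
\[\X(\A)=\sigma(\A)=\sigma\bigl((a_1,\dots,a_n]\bigr)=\sigma(a_1)\cup\cdots\cup\sigma(a_n),\]
and since each $\sigma(a_i)$ is compact, $\X(\A)$ is a finite union of compact subspaces, hence compact.

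For the forward direction, assume $\X(\A)$ is compact. Using the basis $\mc{T}_0(\A)=\{\sigma(a)\mid a\in\A\}$ and the equality $\X(\A)=\bigcup_{a\in \A}\sigma(a)$, compactness supplies $a_1,\dots,a_n\in \A$ with $\X(\A)=\sigma(a_1)\cup\cdots\cup\sigma(a_n)=\sigma\bigl((a_1,\dots,a_n]\bigr)$. Since $\sigma$ is a lattice isomorphism onto $\mc{T}(\A)$ and $\sigma(\A)=\X(\A)$, injectivity of $\sigma$ forces $(a_1,\dots,a_n]=\A$, so $\A$ is finitely generated as an ideal.

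There is no serious obstacle here; the only things to be careful about are that $\sigma(\A)=\X(\A)$ (which needs primes to be proper) and that the union-to-join formula lets us replace an open cover by a single principal-type ideal expression, after which the isomorphism from Remark \ref{sigma is isom} does all the work.
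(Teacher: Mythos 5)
Your proof is correct and takes essentially the same route as the paper: both directions come down to the basis $\{\sigma(a)\}$, extraction of a finite subcover, and compactness of each $\sigma(a)$. The only variation is how the forward direction is closed: the paper observes that no maximal (hence prime) ideal can contain $\{u_1,\dots,u_k\}$ and concludes $(u_1,\dots,u_k]=\A$, whereas you conclude via the identity $\sigma(u_1)\cup\cdots\cup\sigma(u_k)=\sigma\bigl((u_1,\dots,u_k]\bigr)$ and the injectivity of $\sigma$ from Remark~\ref{sigma is isom} --- both rest on the same separation of ideals by primes, but your version cleanly sidesteps any implicit appeal to the existence of maximal ideals above a proper ideal.
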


\begin{proof} Suppose $\X(\A)$ is compact. Since $\mc{T}_0=\{\sigma(a)\mid a\in\A\}$ is a basis for $\X(\A)$, we can write $\X(\A)=\bigcup_{a\in A}\sigma(a)$. By compactness we must have $\X(\A)=\bigcup_{i=1}^k\sigma(u_i)$ for some elements $u_1,\ldots, u_k\in A$. We notice that no prime ideal can contain all the $u_i$'s, for otherwise $P\notin \sigma(u_i)$ for every $i$; this is a contradiction since $\X(\A)=\bigcup_{i=1}^k\sigma(u_i)$. In particular, since maximal ideals are prime, no maximal ideal can contain the set $\{u_1, \ldots, u_k\}$. Thus the smallest ideal containing all of the $u_i$'s is $\A$. That is, $\A=(u_1, \ldots, u_k]$.

Conversely, assume $\A$ is finitely generated as an ideal. Then $\A=(u_1, \ldots, u_k]$ for some elements $u_1, \ldots, u_k\in A$. Because prime ideals are proper, we must have $\{u_1,\ldots, u_k\}\not\subseteq P$ for all $P\in\X(\A)$. This means that for each prime ideal $P$ there is some $i\in\{1,\ldots, k\}$ such that $P\in\sigma(u_i)$, and hence $\X(\A)=\bigcup_{i=1}^k\sigma(u_i)$. We know each $\sigma(u_i)$ is compact, and so $\X(\A)$ is a finite union of compact sets.
\end{proof}

\begin{definition} A topological space $X$ is a \textit{generalized spectral space} if it satisfies (H2)-(H4).\end{definition}

So a generalized spectral space which happens to be compact is a spectral space. We will prove that $\X(\A)$ is a generalized spectral space in steps.

\begin{lemma}\label{X(A) is T_0} $\X(\A)$ is $T_0$.\end{lemma}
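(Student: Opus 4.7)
The plan is to exhibit, for any pair of distinct prime ideals $P,Q\in\X(\A)$, a basic open set from $\mc{T}_0(\A)=\{\sigma(a)\mid a\in\A\}$ that separates them. Since distinct ideals differ as subsets of $A$, at least one of the two symmetric differences $P\setminus Q$ or $Q\setminus P$ is non-empty, so without loss of generality pick $a\in P\setminus Q$.

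Next I would unwind the definition $\sigma(a)=\{R\in\X(\A)\mid a\notin R\}$ directly. Since $a\in P$ we have $P\notin\sigma(a)$, while $a\notin Q$ gives $Q\in\sigma(a)$. Thus $\sigma(a)$ is an open neighborhood of $Q$ not containing $P$, establishing the $T_0$ separation axiom.

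There is essentially no obstacle here: the argument is a one-line unpacking of the definitions of $\X(\A)$, $\sigma(a)$, and the $T_0$ condition, and it uses only that the sets $\sigma(a)$ lie in the topology $\mc{T}(\A)$, which was established immediately before the lemma. No appeal to primality of $P$ or $Q$ is required; any two distinct ideals (viewed as subsets of $A$) would be separated by this argument.
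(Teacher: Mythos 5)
Your proof is correct and is essentially identical to the paper's: both pick $a\in P\setminus Q$ after a WLOG step and observe that $\sigma(a)$ contains $Q$ but not $P$. Your closing remark that primality is never used is accurate and consistent with the paper's argument.
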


\begin{proof} Take $P,Q\in\X(\A)$ with $P\neq Q$. Without loss of generality there is some $a\in P$ such that $a\notin Q$. Then $Q\in\sigma(a)$ and $P\notin \sigma(a)$, so $\sigma(a)$ is an open set separating $P$ and $Q$. 
\end{proof}

\begin{lemma}\label{cpct opens form basis} The compact open sets $\K\X(\A)$ form a multiplicative basis for $\X(\A)$, so $\X(\A)$ satisfies (H3).
\end{lemma}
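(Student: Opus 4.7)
The plan is to leverage the facts already noted in the paper: that $\mc{T}_0(\A) = \{\sigma(a) \mid a \in \A\}$ is a basis for $\X(\A)$, that every $\sigma(a)$ is compact open (by Corollary 4 of \cite{MJ98}), and that $\sigma(a) \cap \sigma(b) = \sigma(a \meet b)$.

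For the basis property, since $\mc{T}_0(\A)$ is already a basis of compact open sets, it is contained in $\K\X(\A)$. Any collection containing a basis is itself a basis, so $\K\X(\A)$ is a basis for the topology on $\X(\A)$.

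For closure under finite intersections, the first step will be to characterize the elements of $\K\X(\A)$ as precisely the finite unions of basic open sets $\sigma(a_1) \cup \cdots \cup \sigma(a_n)$. The forward inclusion uses that any compact open $U$ can be written as a union of basic sets $\sigma(a)$ (since $\mc{T}_0(\A)$ is a basis), and compactness reduces this to a finite union. The reverse inclusion follows because a finite union of compact opens is compact open. With this characterization in hand, given $U = \bigcup_{i=1}^n \sigma(a_i)$ and $V = \bigcup_{j=1}^m \sigma(b_j)$ in $\K\X(\A)$, distributing gives
\[U \cap V = \bigcup_{i=1}^n\bigcup_{j=1}^m \sigma(a_i) \cap \sigma(b_j) = \bigcup_{i,j} \sigma(a_i \meet b_j),\]
which is again a finite union of basic compact opens, hence in $\K\X(\A)$.

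There is no real obstacle here; the result is essentially a formal consequence of the identity $\sigma(a) \cap \sigma(b) = \sigma(a \meet b)$ together with the compactness of the basic opens $\sigma(a)$. The only care required is the observation that compact opens in this space are exactly the finite unions of basic opens, which is what allows the distributive expansion above to stay inside $\K\X(\A)$.
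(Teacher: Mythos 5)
Your proposal is correct and follows essentially the same route as the paper: identify $\K\X(\A)$ with the finite unions of the compact basic opens $\sigma(a)$, then close under intersection via the distributive expansion and the identity $\sigma(a)\cap\sigma(b)=\sigma(a\meet b)$. The only difference is that you spell out explicitly the step the paper dismisses as ``a moment's thought.''
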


\begin{proof} We know $\mc{T}_0=\{\sigma(a)\mid a\in \A\}$ is a basis for $\X(\A)$ consisting of compact subsets. Let $\overline{\mc{T}}_0$ be the closure of $\mc{T}_0$ under finite unions; a moment's thought gives $\overline{\mc{T}}_0=\K\X(\A)$, and this is certainly a basis for the same topology. Take $U,V\in\K\X(\A)$. Then 
\begin{align*}
U\cap V=\Bigl(\bigcup_{i=1}^n\sigma(a_i)\Bigr)\cap\Bigl(\bigcup_{j=1}^m\sigma(b_j)\Bigr)
&=\bigcup_{i=1}^n\bigcup_{j=1}^m\sigma(a_i)\cap\sigma(b_j)\\
&=\bigcup_{i=1}^n\bigcup_{j=1}^m\sigma(a_i\meet b_j)\in\K\X(\A)\,,
\end{align*} and thus $\overline{\mc{T}}_0=\K\X(\A)$ is a multiplicative basis.
\end{proof} 

For $I\in\id(\A)$, define $V(I):=\{Q\in\X(\A)\mid I\subseteq Q\}=\sigma(I)^c$. It is straightforward to check that $V$ is order-reversing and that $V(I\cap J)=V(I)\cup V(J)$ for any ideals $I$ and $J$.

\begin{lemma}\label{X(A) is sober} Every irreducible closed set in $\X(\A)$ has the form $V(P)$ for some $P\in\X(\A)$, and $V(P)$ is the closure of $\{P\}$. Consequently, $\X(\A)$ is quasi-sober, satisfying (H4).
\end{lemma}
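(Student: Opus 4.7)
The plan is to use the ideal-theoretic description of closed sets in $\X(\A)$ provided by Remark~\ref{sigma is isom}: every closed set has the form $V(I)=\sigma(I)^c$ for some $I\in\id(\A)$, and the map $V(-)$ reverses inclusion. First I dispose of the easier half. For $P\in\X(\A)$, any closed set $V(I)$ containing $P$ satisfies $I\subseteq P$, which forces $V(P)\subseteq V(I)$; since $P\in V(P)$, we conclude $V(P)$ is the smallest closed set containing $P$, i.e.\ $\overline{\{P\}}=V(P)$.

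For the main assertion, let $C$ be a non-empty irreducible closed set and write $C=V(J_0)$ for some ideal $J_0$. The natural candidate for the ``generic point'' is $P:=\bigcap_{Q\in C} Q$, which is an ideal as an intersection of ideals, and which is proper because $C\neq\emptyset$ gives $P\subseteq Q$ for some $Q\in\X(\A)$. I first verify $V(P)=C$: every $Q\in C$ contains $P$ by construction, giving $C\subseteq V(P)$; conversely $J_0\subseteq Q$ for every $Q\in C$, so $J_0\subseteq P$, hence $V(P)\subseteq V(J_0)=C$.

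Next I must show $P$ is prime. Using Pa\l asinski's theorem it suffices to prove $P$ is meet-prime: given ideals $J_1,J_2$ with $J_1\cap J_2\subseteq P$, I want $J_1\subseteq P$ or $J_2\subseteq P$. For each $Q\in C$ we have $J_1\cap J_2\subseteq P\subseteq Q$, and since $Q$ itself is meet-prime this forces $J_1\subseteq Q$ or $J_2\subseteq Q$. Hence
\[
C=\bigl(C\cap V(J_1)\bigr)\cup\bigl(C\cap V(J_2)\bigr),
\]
a union of two sets closed in $C$. Irreducibility of $C$ forces $C=C\cap V(J_i)$ for some $i$, giving $J_i\subseteq Q$ for every $Q\in C$ and therefore $J_i\subseteq P$, as required. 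Thus $P\in\X(\A)$ and $C=V(P)=\overline{\{P\}}$, establishing quasi-sobriety and (H4).

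The main obstacle is the two distinct uses of primeness: first invoking that each individual $Q\in C$ is meet-prime in order to partition $C$ into two closed subsets, and then invoking irreducibility of $C$ to collapse this partition into meet-primeness of the intersection $P$. Everything else is bookkeeping with the order-reversing correspondence between ideals and closed sets.
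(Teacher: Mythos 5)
Your proof is correct, and it takes a somewhat different route from the paper's. The paper starts from $C=V(I)$ and argues that the ideal $I$ itself must be prime: assuming $I=J_1\cap J_2$ with $J_1,J_2\neq I$, it invokes Pa\l asinski's theorem that every ideal equals the intersection of the prime ideals containing it to conclude $V(J_1),V(J_2)\subsetneq V(I)$, contradicting irreducibility. You instead build the generic point directly as $P=\bigcap_{Q\in C}Q$, check $V(P)=C$ by elementary order-reversal, and derive meet-primeness of $P$ from irreducibility of $C$ together with the fact that each individual $Q\in C$ is meet-prime. The two arguments are close cousins --- by the intersection-of-primes theorem your $P$ is in fact equal to the ideal $J_0$ the paper works with --- but yours has the advantage of needing only the prime/meet-prime/irreducible equivalence from \cite{palasinski81}, not the decomposition result from \cite{palasinski82(2)}, and it mirrors the classical argument for sobriety of the spectrum of a commutative ring. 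The paper's version buys the extra (and here unneeded, since closures of points are automatically irreducible) converse statement that every $V(P)$ with $P$ prime is irreducible, which it gets from the injectivity of $\sigma$ noted in Remark \ref{sigma is isom}. One small point of hygiene: when you split $C=\bigl(C\cap V(J_1)\bigr)\cup\bigl(C\cap V(J_2)\bigr)$ you should note that these pieces are closed in $\X(\A)$ (not merely in $C$) because $C$ itself is closed, so that the contradiction with irreducibility is with the same notion of irreducibility used in the statement; this is immediate but worth a clause.
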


\begin{proof} Let $P\in\X(\A)$. Note that $V(P)=\sigma(P)^c$, so $V(P)$ is closed. Next, suppose $V(P)=C\cup D$ for two proper closed subsets $C,D \subsetneq V(P)$. Since $C$ and $D$ are closed, there are ideals $I,J\in\id(\A)$ such that $C=\sigma(I)^c$ and $D=\sigma(J)^c$. Thus,
\[\sigma(P)^c=V(P)=\sigma(I)^c\cup\sigma(J)^c=\bigl(\sigma(I)\cap\sigma(J)\bigr)^c=\sigma(I\cap J)^c\] which implies that $\sigma(P)=\sigma(I\cap J)$. But by Remark \ref{sigma is isom}, $\sigma$ is injective, so $P=I\cap J$. Since $P$ is prime it is irreducible, so either $P=I$ or $P=J$. Without loss of generality, assume $P=I$. Then $V(P)=V(I)=\sigma(I)^c=C$, which is a contradiction. Hence, we cannot write $V(P)$ as a union of proper closed subsets, meaning $V(P)$ is irreducible.

On the other hand, assume $C$ is an irreducible closed subset of $\X(\A)$. Since $C$ is closed we have $C=\sigma(I)^c=V(I)$ for some ideal $I$. We claim that $I$ is prime.

Suppose not. Then there are ideals $J_1$ and $J_2$ such that $J_1\cap J_2=I$ but $J_1\neq I$ and $J_2\neq I$. Then \[V(I)=V(J_1\cap J_2)=V(J_1)\cup V(J_2)\,.\] Suppose $V(I)=V(J_1)$. Pa\l asinski proved in \cite{palasinski82(2)} that any ideal is equal to the intersection of the prime ideals containing it; from this we obtain \[I=\bigcap_{P\in V(I)}P=\bigcap_{Q\in V(J_1)}Q=J_1\,.\] This is a contradiction, and a similar analysis holds for $J_2$. But $J_1\cap J_2\subseteq J_1$ means $V(J_1)\subseteq V(J_1\cap J_2)=V(I)$, so we must have $V(J_1)\subsetneq V(I)$. Similarly, $V(J_2)\subsetneq V(I)$. Hence, if $I$ is not prime, we can decompose $C=V(I)=V(J_1)\cup V(J_2)$ into a union of proper closed subsets, a contradiction. Therefore, any irreducible closed subset has the form $V(P)$ for some $P\in\X(\A)$. 

Lastly we show that $V(P)=\overline{\{P\}}$. Suppose $P\in C$ where $C$ is a closed set; there is some ideal $I$ such that $C=\sigma(I)^c=V(I)$. So $P\in V(I)$, meaning $I\subseteq P$, and thus $V(P)\subseteq V(I)=C$. Hence, $V(P)$ is the smallest closed set containing $P$, meaning $V(P)=\overline{\{P\}}$. 
\end{proof} 

\begin{theorem}\label{X(A) is LC gspec} For any cBCK-algebra $\A$, the spectrum $\X(\A)$ is a locally compact generalized spectral space.\end{theorem}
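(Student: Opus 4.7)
The plan is to simply assemble the pieces already established. By the definition preceding the theorem, a generalized spectral space is one satisfying Hochster's conditions (H2)--(H4), and local compactness has been separately recorded. So all the work is actually done, and the only thing left is to cite each lemma against the right bullet.

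Concretely, I would write the proof in four lines. First, $\X(\A)$ is $T_0$, which is (H2), by Lemma \ref{X(A) is T_0}. Second, the compact open subsets $\K\X(\A)$ form a multiplicative basis, which is (H3), by Lemma \ref{cpct opens form basis}. Third, every irreducible closed subset of $\X(\A)$ is of the form $V(P) = \overline{\{P\}}$ for a (not necessarily unique) prime ideal $P$, so $\X(\A)$ is quasi-sober, which is (H4), by Lemma \ref{X(A) is sober}. Hence $\X(\A)$ is a generalized spectral space. Finally, local compactness follows from Corollary \ref{X is LC}, since every point lies in the compact open set $\sigma(a)$ for some $a\in A$ containing it (as the $\sigma(a)$ form a basis).

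There is essentially no obstacle here; the theorem is a summary statement that packages the four preceding results into the terminology of Hochster's theorem. The only very small point one might flag is that compactness is \emph{not} being asserted (it would fail in general by the discussion around Theorem \ref{cpct iff}), so the word \emph{generalized} in (H2)--(H4) is important: we drop (H1) and replace it with local compactness, which is genuinely weaker. Apart from being careful to note that, the proof is a one-paragraph citation of Lemmas \ref{X(A) is T_0}, \ref{cpct opens form basis}, \ref{X(A) is sober}, and Corollary \ref{X is LC}.
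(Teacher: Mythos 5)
Your proposal is correct and matches the paper's proof exactly: the paper also proves this theorem by combining Corollary \ref{X is LC} with Lemmas \ref{X(A) is T_0}, \ref{cpct opens form basis}, and \ref{X(A) is sober}. Your additional remark about dropping (H1) in favor of local compactness is accurate but not needed for the argument.
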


\begin{proof}
Combine Corollary \ref{X is LC} and Lemmas \ref{X(A) is T_0}, \ref{cpct opens form basis}, and \ref{X(A) is sober}.
\end{proof}

We recall that a \textit{Priestley space} is an ordered topological space $(X,\leq, \mc{T})$ that is compact and satisfies the following separation property (PSA): if $x\not\leq y$, there exists a clopen up-set $U$ such that $x\in U$ but $y\notin U$. Notice that (PSA) implies that $X$ is Hausdorff. It is a theorem (see Theorem 4.2 of \cite{johnstone82}) that the following conditions are equivalent:
\begin{enumerate}
\item $X$ is Hausdorff, sober, and $\K(X)$ is a multiplicative basis.
\item $X$ is compact, Hausdorff, and totally disconnected (that is, $X$ is a Stone space).
\end{enumerate} Thus, if a space $X$ satisfies (PSA), is sober, and $\K(X)$ is a multiplicative basis, then $X$ is a Priestley space.

In \cite{ADT93}, the authors showed that if $\A$ is involutory then $P\in\sigma(I)$ if and only if $P\notin\sigma(I^\ast)$ for any $I\in\id(\A)$. Said another way, if $\A$ is involutory, then $\sigma(I)=V(I^\ast)=\sigma(I^\ast)^c$ for any ideal $I$ of $\A$.

\begin{lemma}\label{invol imply sigma is clopen upset} Let $\A$ be involutory. Then $\sigma(I)$ is a clopen up-set in $\X(\A)$ for all $I\in\id(\A)$.
\end{lemma}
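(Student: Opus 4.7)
The plan is to reduce the lemma to the identity $\sigma(I) = V(I^\ast) = \sigma(I^\ast)^c$ which the paper records from \cite{ADT93} in the paragraph immediately preceding the lemma. Once this identity is in hand, both the clopen property and the up-set property fall out almost immediately, so the real content of the lemma is unpacking what involutoriness buys us.

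First, I would verify that $\sigma(I)$ is clopen. That $\sigma(I)$ is open is built into the definition of the topology $\mc{T}(\A)$. For closedness, I would invoke the involutory identity $\sigma(I) = \sigma(I^\ast)^c$: the right-hand side is the complement of an open set, and hence is closed. Combining the two gives $\sigma(I)\in\mc{T}(\A)$ and $\sigma(I)^c\in\mc{T}(\A)$, so $\sigma(I)$ is clopen.

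Next, I would establish that $\sigma(I)$ is an up-set in the inclusion order on $\X(\A)$ (which is the convention needed to make the basic opens up-sets in the eventual Priestley picture of Theorem \ref{invol implies pries}). Rewriting $\sigma(I) = V(I^\ast)$, the claim reduces to the general fact that sets of the form $V(J) = \{P\in\X(\A)\mid J\subseteq P\}$ are always up-sets under inclusion: if $P\in V(I^\ast)$ and $P\subseteq Q$ with $Q\in\X(\A)$, then $I^\ast\subseteq P\subseteq Q$, so $Q\in V(I^\ast)=\sigma(I)$.

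The only real subtlety is keeping the order convention straight. Without involutoriness, $\sigma(I) = \{P\mid I\not\subseteq P\}$ is manifestly a \emph{down}-set under inclusion, so the lemma would fail. Involutoriness is precisely what allows the rewrite $\sigma(I)=V(I^\ast)$, which flips the behavior from a down-set (in $I$) to an up-set (in $I^\ast$). So I do not anticipate any genuine obstacle here; the proof is essentially a one-line application of the annihilator identity, and I would simply cite Lemma \ref{ann_of_union}-style reasoning or the relevant result from \cite{ADT93} for the identity $\sigma(I)=\sigma(I^\ast)^c$ itself.
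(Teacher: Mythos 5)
Your proof is correct and follows essentially the same route as the paper: clopenness via $\sigma(I)=\sigma(I^\ast)^c$ with $\sigma(I^\ast)$ open, and the up-set property by rewriting $\sigma(I)=V(I^\ast)$ and chasing $I^\ast\subseteq P\subseteq Q$. The only quibble is your parenthetical claim that without involutoriness ``the lemma would fail'' because $\sigma(I)$ is a down-set --- a set can be simultaneously a down-set and an up-set, so that remark overstates the case, but it plays no role in the argument.
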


\begin{proof} Take $I\in\id(\A)$. Then $\sigma(I^\ast)^c=\sigma(I)$. Since $I^\ast$ is an ideal, $\sigma(I^\ast)$ is an open set and hence $\sigma(I)$ is clopen. 

Now take $P, Q\in\X(\A)$ with $P\subseteq Q$. Assume $P\in\sigma(I)$. Then
\begin{align*}
P\notin\sigma(I^\ast) &\Longrightarrow I^\ast\subseteq P\subseteq Q\\
&\Longrightarrow Q\notin\sigma(I^\ast)\\
&\Longrightarrow Q\in\sigma(I)
\end{align*} which shows that $\sigma(I)$ is an up-set.
\end{proof}

\begin{theorem}\label{invol implies pries} If $\A$ is involutory, then $\X(\A)$ is a Priestley space.
\end{theorem}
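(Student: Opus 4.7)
Plan: The plan is to verify the three hypotheses of the Johnstone characterization (Theorem~4.2) cited immediately above the theorem: sobriety, that $\K\X(\A)$ is a multiplicative basis, and the Priestley separation axiom (PSA). The first two hold for every cBCK-algebra, independent of the involutory hypothesis, by Lemmas~\ref{X(A) is T_0}, \ref{cpct opens form basis}, and \ref{X(A) is sober}. So the only new ingredient needed is (PSA); once it is in hand it automatically forces Hausdorffness, and the cited equivalence then upgrades the structure to a Stone space, which together with (PSA) yields a Priestley space.

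To verify (PSA), using the inclusion order on $\X(\A)$ consistent with Lemma~\ref{invol imply sigma is clopen upset}, take $P, Q \in \X(\A)$ with $P \not\subseteq Q$ and pick $a \in P \setminus Q$. The obvious candidate $\sigma(a) = \sigma((a])$ is a clopen up-set by Lemma~\ref{invol imply sigma is clopen upset}, but it separates in the wrong direction: $Q \in \sigma(a)$ while $P \notin \sigma(a)$. The involutory hypothesis lets us pass to the complement: the identity $\sigma(I) = \sigma(I^{\ast})^c$ from \cite{ADT93} recalled just before Lemma~\ref{invol imply sigma is clopen upset} gives $\sigma(a)^c = \sigma((a]^{\ast})$, and since $(a]^{\ast}$ is an ideal, a second application of Lemma~\ref{invol imply sigma is clopen upset} shows this complement is itself a clopen up-set. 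Setting $U := \sigma(a)^c$ gives $P \in U$ (from $a \in P$) and $Q \notin U$ (from $a \notin Q$), verifying (PSA).

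The main conceptual obstacle is directional: the natural clopen set $\sigma(a)$ produced by an element $a \in P \setminus Q$ separates $P$ from $Q$ on the wrong side for (PSA), and the involutory hypothesis is used precisely to realize its complement as another clopen up-set via the identity $\sigma(I) = \sigma(I^{\ast})^c$. Everything else is bookkeeping against the previously established lemmas, after which the theorem follows directly from the cited equivalence.
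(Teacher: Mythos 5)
Your proposal is correct and follows essentially the same route as the paper: verify (PSA) by separating $P\not\subseteq Q$ with the clopen up-set $\sigma(a)^c=\sigma\bigl(\{a\}^\ast\bigr)$ obtained from the involutory identity $\sigma(I)=\sigma(I^\ast)^c$ and Lemma~\ref{invol imply sigma is clopen upset}, then combine with the already-established sobriety and multiplicative-basis lemmas via the cited Johnstone equivalence. The only cosmetic difference is that you write the annihilator as $(a]^\ast$ where the paper writes $\{a\}^\ast$, which changes nothing.
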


\begin{proof} We first show $\X(\A)$ satisfies (PSA). Suppose we have $P,Q\in\X(\A)$ with $P\not\subseteq Q$. Then there is some $a\in \A$ such that $a\in P$ and $a\notin Q$, which implies $P\notin \sigma(a)$ and $Q\in\sigma(a)$. Therefore we have $P\in\sigma\bigl(\{a\}^\ast\bigr)$ and $Q\notin\sigma\bigl(\{a\}^\ast\bigr)$. Lemma \ref{invol imply sigma is clopen upset} tells us $\sigma\bigl(\{a\}^\ast\bigr)$ is a clopen up-set. Hence, $\X(\A)$ satisfies (PSA).

We have already seen in Lemmas \ref{X(A) is sober} and \ref{cpct opens form basis} that $\X(\A)$ is sober and that $\K\X(\A)$ is a multiplicative basis. So $\X(\A)$ is a Priestley space.
\end{proof}

We now collect some properties of spectra for any cBCK-algebra $\A$. Let $\mrm{M}(\A)$ denote the maximal spectrum of $\A$; that is, the subset of $\X(\A)$ consisting of maximal ideals, endowed with the subspace topology. 

A cBCK-algebra $\A$ is \textit{directed} if each pair of elements has an upper bound.

\begin{proposition}\label{small properties} Let $\A$ be a cBCK-algebra.
\begin{enumerate}
\item $\X(\A)$ is Hausdorff if and only if each $\sigma(a)$ is clopen.

\item If each $\sigma(a)$ is clopen, then $\X(\A)$ is zero-dimensional, totally disconnected, and completely regular.

\item A point $\{M\}\in\X(\A)$ is closed if and only if $M$ is a maximal ideal.

\item If every prime ideal of $\A$ is maximal, then $\X(\A)$ is Hausdorff

\item If $\A$ is directed, then $\mrm{M}(\A)$ is Hausdorff.

\item If $\A$ is directed, then $\X(\A)$ is Hausdorff if and only if $\X(\A)=\mrm{M}(\A)$.

\end{enumerate}
\end{proposition}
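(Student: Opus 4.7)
The plan is to handle the six parts in order, leveraging the sober, locally compact generalized spectral structure of $\X(\A)$ established earlier in the section. For (1), the forward implication uses that each $\sigma(a)$ is compact (noted before Corollary \ref{X is LC}) and therefore closed in any Hausdorff topology, hence clopen. The reverse implication combines the $T_0$ separation of Lemma \ref{X(A) is T_0} with the basis property: given $P\neq Q$, find an element $a$ lying in exactly one of them and use $\sigma(a)$ together with its complement as disjoint opens. Part (2) is then immediate: a basis of clopen sets is the definition of zero-dimensionality, and total disconnectedness and complete regularity are standard consequences (combined with Hausdorffness from (1)). For part (3), apply Lemma \ref{X(A) is sober}: $\overline{\{M\}}=V(M)$, which equals $\{M\}$ precisely when no prime properly contains $M$; since every proper ideal lies in some maximal ideal (which is prime), this amounts to $M$ being maximal.

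For (4), Part (3) immediately yields that $\X(\A)$ is $T_1$, and the plan is to upgrade this to Hausdorff by showing each $\sigma(a)$ is clopen and invoking (1). Since $\sigma(a)^c=V((a])$, one must show, for each prime $P$ with $a\in P$, the existence of $b\notin P$ with $b$ lying in every prime $Q$ not containing $a$; equivalently, $b\in\bigcap_{Q\in\sigma(a)}Q$ with $b\notin P$. The idea is to leverage the maximality of $P$ and of each $Q\in\sigma(a)$ (plus the incomparability of distinct maximal ideals) to produce such a $b$, perhaps via annihilators. Alternatively, one can appeal to the general fact that a sober space with a multiplicative basis of compact opens in which every point is closed must be Hausdorff, proved by comparison with the patch (constructible) topology.

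Part (5) is the technical heart. Given distinct $M_1, M_2\in\mrm{M}(\A)$, pick $a\in M_1\setminus M_2$ and $b\in M_2\setminus M_1$ using incomparability, and invoke directedness to select $c\geq a,b$. Since $M_1$ is a down-set with $a\in M_1$ but $b\notin M_1$, if $c\in M_1$ then $b\leq c$ would force $b\in M_1$, a contradiction; thus $c\notin M_1$, and symmetrically $c\notin M_2$. From $a,b,c$ one then manufactures two elements whose basic open neighborhoods meet $\mrm{M}(\A)$ in disjoint sets separating $M_1$ and $M_2$; natural candidates are meets and truncated differences of $a$, $b$, and $c$, engineered so that their meet lies in every maximal ideal. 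Verifying this disjointness is the principal obstacle and will require careful use of both the directedness and the maximality condition.

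Finally, (6) follows from the others: if $\X(\A)$ is Hausdorff then it is $T_1$, so by (3) every prime is maximal, giving $\X(\A)=\mrm{M}(\A)$; conversely, if $\X(\A)=\mrm{M}(\A)$, then (4) directly yields Hausdorffness (or alternatively (5) does, since then $\mrm{M}(\A)=\X(\A)$ is Hausdorff).
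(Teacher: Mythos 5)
Parts (1), (2), and (6) track the paper's argument and are fine, but there are two genuine problems. The main one is part (5), which you yourself leave unresolved: you stop at ``one then manufactures two elements whose basic open neighborhoods \ldots are disjoint'' and concede that verifying disjointness is the obstacle. The missing ingredient is the identity $(x\bdot y)\meet(y\bdot x)=0$, valid in any directed cBCK-algebra (the paper cites Dvure\v{c}enskij--Pulmannov\'a for it). Granting that identity, the proof is short and does not need your auxiliary upper bound $c$ at all: with $a\in M_1\setminus M_2$ and $b\in M_2\setminus M_1$, one has $a\bdot b\leq a$, so $a\bdot b\in M_1$ since ideals are down-sets; and $a\bdot b\notin M_2$, since otherwise $a\bdot b\in M_2$ and $b\in M_2$ would force $a\in M_2$. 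Hence $M_2\in\sigma(a\bdot b)$, symmetrically $M_1\in\sigma(b\bdot a)$, and $\sigma(a\bdot b)\cap\sigma(b\bdot a)=\sigma\bigl((a\bdot b)\meet(b\bdot a)\bigr)=\sigma(0)=\emptyset$. Without that identity your candidates built from $a$, $b$, $c$ have no reason to give disjoint open sets, so as written (5) is not proved.

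The second problem is in (3): the supporting claim ``every proper ideal lies in some maximal ideal'' is false for general cBCK-algebras. There is no top element, so the union of a chain of proper ideals can be the whole algebra; for instance, the positive cone of $\bigoplus_{n}\bb{Z}$, totally ordered so that later coordinates dominate, is a cBCK-chain whose proper ideals form a strictly increasing sequence $\{0\}\subset K_1\subset K_2\subset\cdots$ with union the whole algebra and no maximal member. The conclusion of (3) survives because what you actually need is weaker: if $M$ is not maximal, then some \emph{prime} properly contains $M$, and this follows from Pa\l asinski's theorem (already invoked in the proof of Lemma \ref{X(A) is sober}) that every proper ideal is the intersection of the prime ideals containing it. Finally, on (4): you are right that passing from ``every point is closed'' to Hausdorff requires an argument (the paper elides this step); your second route --- every $T_1$ generalized spectral space is Hausdorff, via comparison with the patch topology --- is a correct and citable fact (Dickmann--Schwartz--Tressl), whereas your first route via annihilators is only a sketch and should be dropped or completed.
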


\begin{proof}(1) Assume each $\sigma(a)$ is clopen. Take $P,Q\in\X(\A)$ with $P\neq Q$. Without loss of generality, there is some $x\in P$ such that $x\notin Q$. Then $Q\in\sigma(x)$ while $P\in\sigma(x)^c$. The sets $\sigma(x)$ and $\sigma(x)^c$ are obviously disjoint and open, so $\X(\A)$ is Hausdorff.

On the other hand, assume $\X(\A)$ is Hausdorff and take $a\in \A$. We know $\sigma(a)$ is compact open, but compact subsets of Hausdorff spaces are closed. So $\sigma(a)$ is clopen.

(2) Assume each $\sigma(a)$ is clopen. Then $\mc{T}_0$ is a basis of clopen sets which means $\X(\A)$ is zero-dimensional. From Lemma \ref{X(A) is T_0} and (1) above, we see that $\X(\A)$ is locally compact Hausdorff. Any locally compact Hausdorff space is completely regular. Lastly, for locally compact Hausdorff spaces, being zero-dimensional is equivalent to being totally disconnected.

(3) We know maximal ideals are prime, and we saw in Lemma \ref{X(A) is sober} that $\overline{\{P\}}=V(P)=\{Q\in\X(\A)\mid P\subseteq Q\}$ for any prime ideal $P$. Thus, if $M$ is a maximal ideal, we have $\overline{\{M\}}=\{M\}$ by maximality.

On the other hand, suppose $\{P\}\subseteq \X(\A)$ is closed. We claim that $P$ is a maximal ideal. To see this, suppose there is an ideal $M$ with $P\subseteq M$ and let $C$ be a closed set containing $P$. Then $C=\sigma(J)^c$ for some ideal $J$, and so $P\in \sigma(J)^c$. Thus, $J\subseteq P\subseteq M$, meaning $M\in\sigma(J)^c=C$ as well. But this implies $M\in\bigcap \{C\subseteq\X(\A)\mid P\in C, \text{ $C$ is closed}\}=\{P\}$. Thus, $P=M$ and $P$ is maximal.

(4) Assume $\X(\A)=\mrm{M}(\A)$. By (3), then, it follows that every point is closed and hence $\X(\A)$ is Hausdorff.

(5) Assume $\A$ is directed. Then $(x\cdot y)\meet(y\bdot x)=0$ for all $x,y\in \A$ (see Lemma 5.2.2 and Theorem 5.2.28 of \cite{DP00}). Now take $M_1,M_2\in\mrm{M}(\A)$ with $M_1\neq M_2$. By maximality we have $M_1\not\subseteq M_2$ and $M_2\not\subseteq M_1$. Pick $a\in M_1\setminus M_2$ and $b\in M_2\setminus M_1$. We know $a\bdot b\leq a$ and $b\bdot a\leq b$, and since ideals are down-sets this means $a\bdot b\in M_1$ and $b\bdot a\in M_2$. If $a\bdot b\in M_2$, then we must have $a\in M_2$, a contradiction. So $a\bdot b\in M_1\setminus M_2$, and similarly $b\bdot a\in M_2\setminus M_1$. Thus, $M_1\in \sigma(b\bdot a)$ and $M_2\in \sigma(a\bdot b)$. But notice that 
\[\sigma(a\bdot b)\cap \sigma(b\bdot a)=\sigma\bigl((a\bdot b)\meet(b\bdot a)\bigr)=\sigma(0)=\emptyset\,,\] so $\sigma(a\bdot b)$ and $\sigma(b\bdot a)$ are disjoint open sets separating $M_1$ and $M_2$. Hence, $\mrm{M}(\A)$ is Hausdorff.

(6) This follows from the fact that $(x\cdot y)\meet(y\bdot x)=0$ for all $x,y\in \A$ together with Theorem 6.1.7 of \cite{DP00}.
\end{proof}

We close this section with a theorem that will allow us to more effectively compute spectra for certain algebras, as well as one example computation to illustrate the idea.

\begin{theorem}\label{X(U) is coproduct} Let $\U=\bigcupdot_{\lambda\in\Lambda}\A_\lambda$, where $\Lambda$ is any indexing set. Then $\mrm{X}(\U)$ is homeomorphic to $\bigsqcup_{\lambda\in\Lambda}\mrm{X}(\A_\lambda)$ with the disjoint union topology. That is,
\[\mrm{X}\Bigl(\bigcupdot_{\lambda\in\Lambda}\A_\lambda\Bigr)\simeq\bigsqcup_{\lambda\in\Lambda}\mrm{X}(\A_\lambda)\,.\]
\end{theorem}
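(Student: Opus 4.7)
The plan is to define an explicit bijection $\Phi\colon \bigsqcup_{\lambda\in\Lambda}\X(\A_\lambda)\to\X(\U)$ using the classification from Theorem \ref{primes_in_union}, and then show it carries basic open sets bijectively onto basic open sets. For a prime $Q\in\X(\A_\mu)$, viewed as a point in the $\mu$-summand of the disjoint union, set
\[\Phi(Q,\mu)=\bigcupdot_{\lambda\in\Lambda}\A_{\lambda,\mu}^Q\,.\]
Theorem \ref{primes_in_union} says exactly that every prime ideal of $\U$ arises uniquely this way, so $\Phi$ is a bijection.

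Next I would verify continuity by pulling back the basic open sets $\sigma_\U(u)$ for $u\in\U$. Since $u$ lies in exactly one component of the union, say $u\in\A_\mu$, I would analyze the preimage summand by summand. On the $\mu$-th summand, $(Q,\mu)\in\Phi^{-1}(\sigma_\U(u))$ iff $u\notin \Phi(Q,\mu)$ iff $u\notin Q$, so the preimage restricted to that summand is $\sigma_{\A_\mu}(u)$, which is basic open. On any other summand $\A_\nu$ with $\nu\neq\mu$, every $\Phi(Q,\nu)$ contains the entire block $\A_\mu$, hence contains $u$; the preimage on that summand is empty. Thus $\Phi^{-1}(\sigma_\U(u))$ is open in the coproduct topology.

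For openness of $\Phi$, it suffices to check on basic opens of the disjoint union, which have the form $\sigma_{\A_\mu}(a)$ for $a\in\A_\mu$ sitting in the $\mu$-summand. The same case analysis shows $\Phi(\sigma_{\A_\mu}(a))=\sigma_\U(a)$: the inclusion $\subseteq$ is immediate, while for $\supseteq$, any $P\in\sigma_\U(a)$ is of the form $\Phi(Q,\nu)$ by Theorem \ref{primes_in_union}, and $a\in\A_\mu\subseteq P$ whenever $\nu\neq\mu$, forcing $\nu=\mu$ and then $a\notin Q$. Since $\sigma_\U(a)$ is a basic open of $\X(\U)$, $\Phi$ is open, completing the argument.

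There is no genuinely hard step here; the whole proof runs on the observation that each $a\in\A_\mu$ is automatically contained in the "other components" of any prime ideal of the union, so the basic open sets of $\X(\U)$ are already concentrated in a single summand. The main thing to be careful about is the bookkeeping between $\sigma_\U$ and $\sigma_{\A_\mu}$, and invoking Theorem \ref{primes_in_union} at each step rather than reproving it.
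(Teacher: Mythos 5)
Your proposal is correct and follows essentially the same route as the paper: the same bijection $\Phi$ built from Theorem \ref{primes_in_union}, and the same computation $\Phi^{-1}\bigl(\sigma_\U(a)\bigr)=\sigma_{\A_\mu}(a)$ for continuity. The only difference is that you verify openness on basic open sets $\sigma_{\A_\mu}(a)$ of the disjoint union (which suffices, since images commute with unions), whereas the paper works with an arbitrary open set $V=\bigsqcup_\lambda\sigma_{\A_\lambda}(I_\lambda)$ and shows $\Phi(V)=\sigma_\U\bigl(\bigcupdot_\lambda I_\lambda\bigr)$; this is a harmless streamlining of the same argument.
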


\begin{proof} For any $P\in\bigsqcup_{\lambda\in\Lambda}\mrm{X}(\A_\lambda)$, we know $P$ is a prime ideal of $\A_\mu$ for some $\mu$. Define \[\Phi\colon\bigsqcup_{\lambda\in\Lambda}\mrm{X}(\A_\lambda)\to\mrm{X}\Bigl(\bigcupdot_{\lambda\in\Lambda}\A_\lambda\Bigr)\] by $\Phi(P)=\bigcupdot_{\lambda\in\Lambda}\A_{\lambda,\mu}^P$. By Theorem \ref{primes_in_union} this map is surjective. If $\Phi(P)=\Phi(Q)$, then $\bigcupdot_{\lambda\in\Lambda}\A_{\lambda,\alpha}^P=\bigcupdot_{\lambda\in\Lambda}\A_{\lambda,\beta}^Q$, where $P\in\X(\A_\alpha)$ and $Q\in\X(\A_\beta)$. This is only possible if $\alpha=\beta$ and $P=Q$. So $\Phi$ is a bijection.

We now show that $\Phi$ is continuous. Let $\sigma_\U(a)$, for $a\in \U$, be a basic open set in $\mrm{X}(\U)$. Since $a\in \U$, we have $a\in\A_\beta$ for some $\beta\in\Lambda$. We claim that $\Phi^{-1}\bigl(\sigma_\U(a)\bigr)=\sigma_{\A_\beta}(a)$.

Take $P\in\Phi^{-1}\bigl(\sigma_\U(a)\bigr)$, so $\Phi(P)\in\sigma_\U(a)$. By definition of $\Phi$ we have $\Phi(P)=\bigcupdot_{\lambda\in\Lambda}\A_{\lambda,\mu}^P$ for some index $\mu$. Since $a\in\A_\beta$ but $a\notin \Phi(P)=\bigcupdot_{\lambda\in\Lambda}\A_{\lambda,\mu}^P$, it follows that $\mu=\beta$; that is, $P\in\mrm{X}(\A_\beta)$ and $\Phi(P)=\bigcupdot_{\lambda\in\Lambda}\A_{\lambda,\beta}^P$. So then $a\notin P$ which means $P\in\sigma_{\A_\beta}(a)$.

For the other inclusion, take $P\in\sigma_{\A_\beta}(a)$. So $P\in\mrm{X}(\A_\beta)$ and $a\notin P$. Then $\Phi(P)=\bigcupdot_{\lambda\in\Lambda}\A_{\lambda,\beta}^P$ and $a\notin \Phi(P)$ as well, so $\Phi(P)\in\sigma_\U(a)$. Hence $P\in\Phi^{-1}\bigl(\sigma_\U(a)\bigr)$, and therefore $\Phi^{-1}\bigl(\sigma_\U(a)\bigr)=\sigma_{\A_\beta}(a)$ as claimed. 

Next we note that $\sigma_{\A_\beta}(a)$ is open in the disjoint union topology on $\bigsqcup_{\lambda\in\Lambda}\mrm{X}(\A_\lambda)$. To see this, notice
\[\sigma_{\A_\beta}(a)\cap\mrm{X}(\A_\lambda)=\left.\begin{cases}\emptyset & \text{if $\beta\neq\lambda$}\\\sigma_{\A_\beta}(a) & \text{if $\beta=\lambda$}\end{cases}\right\}\,,\] meaning $\sigma_{\A_\beta}(a)\cap\mrm{X}(\A_\lambda)$ is open in $\mrm{X}(\A_\lambda)$ for all $\lambda$, and therefore $\sigma_{\A_\beta}(a)$ is open in the disjoint union topology. Thus, the preimage under $\Phi$ of any basic open set of $\X(\U)$ is open in $\bigsqcup_{\lambda\in\Lambda}\mrm{X}(\A_\lambda)$, meaning $\Phi$ is continuous.

We show that $\Phi$ is an open map. Let $V\subseteq \bigsqcup_{\lambda\in\Lambda}\mrm{X}(\A_\lambda)$ be open. Then $V\cap\mrm{X}(\A_\lambda)$ is open in $\mrm{X}(\A_\lambda)$ for each $\lambda$. Thus, for each $\lambda$, we have $V\cap\mrm{X}(\A_\lambda)=\sigma_{\A_\lambda}(I_\lambda)$ for some $I_\lambda\in\id(\A_\lambda)$. Put $I=\bigcupdot_{\lambda\in\Lambda}I_\lambda$. We will prove that $\Phi(V)=\sigma_\U(I)$.

Take $Q\in\Phi(V)$; so $Q$ is a prime ideal in $\bigcupdot_{\lambda\in\Lambda}\A_\lambda$. Thus, $Q=\bigcupdot_{\lambda\in\Lambda}\A_{\lambda,\mu}^P$ for some index $\mu$ and some $P\in\mrm{X}(\A_\mu)$. So $\Phi(P)=Q$ meaning $P\in V$, and so $P\in V\cap\mrm{X}(\A_\mu)=\sigma_{\A_\mu}(I_\mu)$. Thus, $I_\mu\not\subseteq P$ which implies that $I=\bigcupdot_{\lambda\in\Lambda}I_\lambda\not\subseteq \bigcupdot_{\lambda\in\Lambda}\A_{\lambda,\mu}^P=Q$. Hence $Q\in\sigma_\U(I)$.

On the other hand, take $Q\in\sigma_\U(I)$. Then $I\not\subseteq Q$ and $Q=\bigcupdot_{\lambda\in\Lambda}\A_{\lambda,\mu}^P$ for some $\mu$ and some $P\in\mrm{X}(\A_\mu)$. It follows that $I_\mu\not\subseteq P$, and so $P\in\sigma_{\A_\mu}(I_\mu)=V\cap\mrm{X}(\A_\mu)$. In particular, $P\in V$ and $\Phi(P)=Q$, so $Q\in\Phi(V)$. 

Therefore, $\Phi(V)=\sigma_\U(I)$ which tells us $\Phi$ is open map. Since $\Phi$ is an open continuous bijection, it is a homeomorphism.
\end{proof}

Let $\A$ be a simple cBCK-algebra. Then $\A$ has exactly two ideals; the trivial ideal is automatically maximal, and therefore prime. Thus, for any simple cBCK-algebra the spectrum $\X(\A)$ is a one-point space, and so the spectra of any two simple cBCK-algebras are homeomorphic. For example, $\X(\C_1)\simeq\X(\N_0)$, despite the algebras $\C_1$ and $\N_0$ being different order types!

\begin{example}\label{noncompact example} Consider $\U=\bigcupdot_{\lambda\in\Lambda}\C_1$. By Theorem \ref{X(U) is coproduct} we have $\X(\U)\simeq \bigsqcup_{\lambda\in\Lambda}\X(\C_1)$ with the disjoint union topology. But $\X(\C_1)$ is a one-point space, so $\X(\U)$ is a discrete space with cardinality $|\Lambda|$. Thus, $\X(\U)$ is not compact unless $\Lambda$ is finite, and a subset $V$ of $\X(\U)$ is compact if and only if $V$ is finite.

Let us label the atoms of $\U$ by $\{a_\lambda\}_{\lambda\in \Lambda}$. Applying Theorem \ref{primes_in_union}, every prime ideal of $\U$ is of the form $P_\lambda=\U\setminus\{a_\lambda\}$ and the basis for our topology is 
\[\mc{T}_0=\bigl\{\,\sigma(0)\,\bigr\}\cup\bigl\{\,\sigma(a_\lambda)\,\bigr\}_{\lambda\in \Lambda}=\bigl\{\,\emptyset\,\bigr\}\cup\bigl\{\,\{P_\lambda\}\,\bigr\}_{\lambda\in \Lambda}\,.\] Hence, $\mc{T}(\U)$ is lattice-isomorphic to $\mt{P}(\Lambda)$, the powerset of $\Lambda$, and $\K\X\bigl(\U)$ is lattice-isomorphic to $\mt{P}_{\text{fin}}(\Lambda)$, the lattice of finite subsets of $\Lambda$.

In particular, we have $\K\X\bigl(\bigcupdot_{i=1}^n\C_1\bigr)$ is lattice-isomorphic to $\bb{B}_n$, the finite Boolean algebra of order $2^n$.

\end{example}


\section{Functoriality of $\K$ and $\X$}

Let $X$ and $Y$ be generalized spectral spaces. A map $g\colon X\to Y$ is a \textit{spectral map} if the inverse image of every compact open subset of $Y$ is compact open in $X$. That is, $g^{-1}\bigl(\K(Y)\bigr)\subseteq \K(X)$. Since $\K(Y)$ forms a basis for the topology on $Y$, any spectral map is continuous. Let $\mt{GSpec}$ denote the category of generalized spectral spaces with spectral maps as morphisms. Similarly, let $\mt{Spec}$ denote the category of spectral spaces with spectral maps as morphisms.

We have already seen that the spectrum of any cBCK-algebra is a generalized spectral space. That is, $\X(\A)\in \mt{GSpec}$ for any $\A\in\mt{cBCK}$. Suppose $f\colon \A\to \B$ is a BCK-homomorphism. For any prime ideal $Q$ in $\B$, the preimage $f^{-1}(Q)$ is a prime ideal in $\A$. So we define $\X(f)\colon \X(\B)\to \X(\A)$ by $\X(f)(Q)=f^{-1}(Q)$ for $Q\in \X(\B)$. It is straightforward to check that $\X:\mt{cBCK}\to \mt{GSpec}$ is a contravariant functor; for a proof see Proposition 4.1.1 of \cite{evans20}

This functor cannot be fully faithful since fully faithful functors are injective on objects. We saw earlier that $\X(\C_1)\simeq \X(\N_0)$ -- they are both one-point spaces -- but certainly $\C_1\not\cong \N_0$.

This has the further implication that $\X$ does not yield a dual equivalence of categories. In this way, our situation is similar to that of commutative rings. The functor $\text{Spec}\colon\mt{CommRing}\to\mt{Spec}$ which associates a commutative ring to its prime spectrum is also not a dual equivalence. We contrast this with the well-known dual equivalences between Boolean algebras and Stone space, $\mt{BA}\cong^\partial\mt{Stone}$, or between bounded distributive lattices and Priestley spaces, $\mt{BDL}\cong^\partial \mt{Pries}$ (see \cite{stone36} and \cite{pries70}). 

It is also well-known that the category of Priestley spaces is equivalent to (in fact, isomorphic to) the category of spectral spaces (see \cite{cornish75}). So by the preceding paragraph we have $\mt{BDL}\cong^\partial\mt{Pries}\cong\mt{Spec}$. Hence, there is a dual equivalence $\mt{BDL}\cong^\partial\mt{Spec}$ between bound\-ed distributive lattices and spectral spaces. This duality extends to a duality between the category $\mt{DL}_0$ of distributive lattices with 0, where the morphisms are 0-preserving lattice homomorphisms with cofinal range, and the category $\mt{GSpec}$ of generalized spectral spaces (see \cite{stone38}). This duality sends a distributive lattice $\D$ to its prime spectrum $\text{Spec}(\D)$ endowed with the Zariski topology in one direction, and it sends a generalized spectral space $Y$ to the lattice $\K(Y)$ of compact open subsets in the other. Our situation is diagrammed below.
\begin{center}
\begin{tikzcd}
\mt{cBCK} \arrow[r, "\X"] & \mt{GSpec} \arrow[r, "\K", bend left] & \mt{DL}_0 \arrow[l, "\text{Spec}", bend left]
\end{tikzcd}
\end{center} It would be very nice to have an explicit characterization of the image of $\X$ in $\mt{GSpec}$, but this is a difficult problem. On the other hand, the dual equivalence between $\mt{DL}_0$ and $\mt{GSpec}$ (as well as $\mt{BDL}$ and $\mt{Spec}$) has been studied and may be a fruitful way of gaining leverage on the situation. In particular, it would be interesting to know what distributive lattices lie in the image of the composite functor $\K\X$. We give partial results in Theorem \ref{culmination}, Corollary \ref{KX yields products}, and Corollary \ref{specific products}

For a topological space $X$ we use the notation $\mc{T}_X$ for the lattice of open sets. It is known (see \cite{CGL99}, Proposition 1.2) that two generalized spectral spaces $X$ and $Y$ are homeomorphic if and only if the lattices $\mc{T}_X$ and $\mc{T}_Y$ are isomorphic.

\begin{corollary}\label{homeo criterion} Let $\A$ be a cBCK-algebra and $Y$ a generalized spectral space. Then $Y\simeq \X(\A)$ if and only if $\mc{T}_Y\cong\id(\A)$ as lattices, if and only if $\K(Y)\cong \K\X(\A)$.
\end{corollary}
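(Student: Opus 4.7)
The plan is to chain together three facts: the cited result from \cite{CGL99} that two generalized spectral spaces are homeomorphic precisely when their open-set lattices are isomorphic; the lattice isomorphism $\sigma\colon \id(\A)\to \mc{T}(\A)$ recorded in Remark \ref{sigma is isom}; and the duality $\mt{GSpec}\cong^\partial \mt{DL}_0$ recalled just before the statement, under which $Y\mapsto \K(Y)$ is the functor from generalized spectral spaces to distributive lattices with zero.

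First I would establish $Y\simeq \X(\A) \iff \mc{T}_Y\cong\id(\A)$. By Theorem \ref{X(A) is LC gspec}, $\X(\A)$ is itself a generalized spectral space, so the CGL99 criterion applies and gives $Y\simeq \X(\A)\iff \mc{T}_Y\cong \mc{T}_{\X(\A)}=\mc{T}(\A)$. Composing with the lattice isomorphism $\id(\A)\cong \mc{T}(\A)$ from Remark \ref{sigma is isom} yields the desired equivalence.

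Next I would establish $Y\simeq\X(\A)\iff \K(Y)\cong\K\X(\A)$. The forward direction is immediate: a homeomorphism of spaces carries compact open sets bijectively to compact open sets and preserves finite unions and intersections, so it restricts to a lattice isomorphism of their compact open lattices. For the converse, since both $Y$ and $\X(\A)$ lie in $\mt{GSpec}$, the contravariant equivalence $\K\colon \mt{GSpec}\to\mt{DL}_0$ is fully faithful; hence any $\mt{DL}_0$-isomorphism $\K(Y)\cong \K\X(\A)$ must come from a homeomorphism $Y\simeq \X(\A)$. Alternatively, one can argue directly by noting that for a generalized spectral space the lattice $\mc{T}_Y$ is the ideal completion of $\K(Y)$ (since $\K(Y)$ is a basis closed under finite unions), so $\K(Y)\cong \K\X(\A)$ forces $\mc{T}_Y\cong \mc{T}_{\X(\A)}$, reducing to the first equivalence.

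There is no real obstacle here, as both equivalences are formal consequences of facts already in place. The only point requiring any care is recording, as above, that the equivalence with $\K(Y)\cong \K\X(\A)$ genuinely uses that $Y$ is a generalized spectral space (so that $\K(Y)$ determines $Y$ up to homeomorphism via the duality), not merely an arbitrary topological space.
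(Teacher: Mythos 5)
Your proposal is correct and follows essentially the same route as the paper: the first equivalence via the cited criterion from \cite{CGL99} combined with Remark \ref{sigma is isom}, and the second via the observation that for a generalized spectral space the compact open sets form a basis, so $\K(Y)$ determines $\mc{T}_Y$ (equivalently, via the duality $\mt{GSpec}\cong^\partial\mt{DL}_0$). Your write-up merely spells out details the paper leaves implicit; no substantive difference.
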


\begin{proof} This follows from the preceding paragraph together with Remark \ref{sigma is isom} which tells us $\mc{T}_{\X(\A)}=\mc{T}(\A)\cong\id(\A)$ as lattices. The second equivalence follows because the compact open sets form a basis for the topology.
\end{proof}

While this does give a small inroad for understanding the image of $\X$ in $\mt{GSpec}$, it represents more a change of perspective rather than a reduction in difficulty. For now we will focus our attention on a particular class of topological spaces.

\subsection{Noetherian spaces}

\begin{definition} A topological space $Y$ is \textit{Noetherian} if it satisfies the descending chain condition on closed subsets: for any sequence $C_1\supseteq C_2\supseteq\cdots$ of closed subsets of $Y$, there is some $n\in\bb{N}$ such that $C_{n+k}=C_{n}$ for all $k>0$.
\end{definition}

This is equivalent to saying that $Y$ satisfies the ascending chain condition on open subsets. One can show (see \cite{hartshorne77}, Exercise 2.13) that a space $Y$ is Noetherian if and only if every open set is compact. We also note that any finite topological space is obviously Noetherian.

\begin{proposition}\label{these spectra are Noetherian} If $T$ is a finite rooted tree, the space $\X(\A^T)$ is Noetherian. For any $n\in\bb{N}$, the space $\X\bigl(\bigcupdot_{i=1}^n\C_1\bigr)$ is Noetherian.
\end{proposition}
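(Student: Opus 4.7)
The plan is to observe that in both cases the spectrum is actually \emph{finite}, from which the Noetherian property is immediate (a finite space has only finitely many closed subsets, so any descending chain of closed subsets must stabilize).

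For the first claim, since $T$ is a finite rooted tree, Corollary \ref{X(A^T) = T^d} gives an order-anti-isomorphism $\X(\A^T) \cong T^\partial$. In particular the underlying set of $\X(\A^T)$ has cardinality $|V(T)|$, which is finite. Any topological space with a finite underlying set has only finitely many open (hence closed) subsets, so the descending chain condition on closed subsets holds trivially. Thus $\X(\A^T)$ is Noetherian.

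For the second claim, apply Theorem \ref{X(U) is coproduct} with $\Lambda=\{1,\ldots,n\}$ and each $\A_\lambda=\C_1$ to obtain a homeomorphism
\[ \X\Bigl(\bigcupdot_{i=1}^n \C_1\Bigr) \simeq \bigsqcup_{i=1}^n \X(\C_1). \]
Since $\C_1$ is simple, its only prime ideal is $\{0\}$, so $\X(\C_1)$ is a one-point space. Therefore the disjoint union on the right is an $n$-point (hence finite) space, and the same argument as above shows it is Noetherian.

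The main observation driving both parts is simply that finite topological spaces are automatically Noetherian; there is no real obstacle, as all the heavy lifting has already been done in Corollary \ref{X(A^T) = T^d} and Theorem \ref{X(U) is coproduct}. If one wished to be more explicit, one could note in the second case that Example \ref{noncompact example} already identifies this spectrum as a discrete $n$-point space, which makes the Noetherian property especially transparent.
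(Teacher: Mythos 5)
Your proposal is correct and matches the paper's approach exactly: the paper's entire proof is the one-line observation that ``these spaces are finite,'' and you have simply supplied the (correct) details of why finiteness holds in each case via Corollary \ref{X(A^T) = T^d} and Theorem \ref{X(U) is coproduct}.
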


\begin{proof} These spaces are finite.\end{proof}

\begin{theorem}\label{infinite chain noetherian} The space $\X(\A^{\ch_\infty})$ is Noetherian.
\end{theorem}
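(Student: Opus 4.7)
The plan is to translate the Noetherian condition on $\X(\A^{\ch_\infty})$ into an algebraic statement on the ideal lattice of $\A^{\ch_\infty}$, and then invoke the explicit description of that lattice from Example \ref{countable chain}.

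First, recall that a topological space is Noetherian if and only if its lattice of open sets satisfies the ascending chain condition. By Remark \ref{sigma is isom}, the map $\sigma$ is a lattice isomorphism from $\id(\A^{\ch_\infty})$ onto $\mc{T}(\A^{\ch_\infty})$. Consequently, ACC on the open sets of $\X(\A^{\ch_\infty})$ is equivalent to ACC on the ideal lattice $\id(\A^{\ch_\infty})$, and it suffices to verify the latter.

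Now, Example \ref{countable chain} establishes $\id(\A^{\ch_\infty})\cong (\bb{N}_0^\infty)^\partial$ as posets. An ascending chain in the dual poset $(\bb{N}_0^\infty)^\partial$ is the same thing as a descending chain in $\bb{N}_0^\infty=\bb{N}_0\cup\{\infty\}$ under its usual order. But $\bb{N}_0^\infty$ is well-ordered (of order type $\omega+1$), and every well-ordered set satisfies the descending chain condition. Thus every ascending chain in $\id(\A^{\ch_\infty})$ stabilizes, completing the argument.

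There is no substantive obstacle once the correspondence via $\sigma$ is in place: the proof reduces to the elementary combinatorial fact that $\omega+1$ has no infinite strictly decreasing sequences. If one preferred a direct route bypassing Example \ref{countable chain}, the same conclusion could be reached via Theorem \ref{ideals in A^T} together with the observation that the root-based paths in $\ch_\infty$ are linearly ordered by inclusion, so that any strictly ascending chain of ideals would correspond to a strictly descending chain of path lengths in $\bb{N}_0^\infty$.
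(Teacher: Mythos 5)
Your proof is correct, but it takes a different route from the paper's. The paper uses the other standard characterization of Noetherian spaces --- every open set is compact --- and verifies it directly with an open-cover argument: since the root-based paths of $\ch_\infty$ are linearly ordered, every open set is $\sigma\bigl(I(p)\bigr)$ for a single path $p$, so given a cover one picks the $q_j$ of minimal length and observes that the corresponding single basic open set already contains $V$. You instead verify the ascending chain condition on open sets, transport it through the lattice isomorphism $\sigma\colon\id(\A^{\ch_\infty})\to\mc{T}(\A^{\ch_\infty})$ of Remark \ref{sigma is isom}, and then quote the computation $\id(\A^{\ch_\infty})\cong(\bb{N}_0^\infty)^\partial$ from Example \ref{countable chain}, reducing everything to the well-ordering of $\omega+1$. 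Both arguments rest on the same structural fact --- the ideal/open-set lattice of $\A^{\ch_\infty}$ is a chain of order type governed by $\bb{N}_0^\infty$ --- but yours outsources the topology entirely to the ideal lattice and is arguably cleaner, at the cost of leaning on the isomorphism asserted (with only a sketch) in Example \ref{countable chain}; the paper's version is self-contained at the level of open covers and does not need the full description of $\id(\A^{\ch_\infty})$, only that every open set is of the form $\sigma\bigl(I(p)\bigr)$. Your closing remark about the direct route via Theorem \ref{ideals in A^T} and path lengths is essentially the paper's argument in disguise.
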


\begin{proof} Let $V$ be an open set and let $\mc{U}=\{U_j\}_{j\in J}$ be an open cover of $V$. Since $\bb{P}(\ch_\infty)$ is linearly ordered, every open set of $\X(\A^{\ch_\infty})$ has the form $\sigma\bigl(I(p')\bigr)$ for some root-based path $p'$. Thus there is a root-based path $p$ such that $V=\sigma\bigl(I(p)\bigr)$, and for each $j$ there is a root-based path $q_j$ such that $U_j=\sigma\bigl(I(q_j)\bigr)$. Let $q$ be the shortest-length path among the $q_j$'s. Then $q\subseteq q_j$ for all $j\in J$, which means $I(q_j)\subseteq I(q)$ for all $j\in J$. Hence, $V\subseteq \bigcup_{j\in J}U_j\subseteq I(q)$, and therefore $V$ is compact. Since every open set is compact, $\X(\A^{\ch_\infty})$ is Noetherian.
\end{proof}

The usefulness of an algebra having a Noetherian spectrum is the following:

\begin{lemma}\label{KX(A) = id(A)} If $\A\in\mt{cBCK}$ is such that $\X(\A)$ is Noetherian, then $\K\X(\A)\cong \id(\A)$. 
\end{lemma}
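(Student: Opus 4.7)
The plan is to leverage two facts already in hand: the lattice isomorphism $\sigma\colon\id(\A)\to\mc{T}(\A)$ from Remark \ref{sigma is isom}, and the characterization of Noetherian spaces that every open subset is compact. Combining these two should collapse $\K\X(\A)$ onto the entire open-set lattice, which is already known to be isomorphic to $\id(\A)$.

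More concretely, first I would note that the Noetherian hypothesis on $\X(\A)$ implies $\K\X(\A)=\mc{T}(\A)$ as collections of subsets of $\X(\A)$: every open set is compact, so every open set is compact open, and of course every compact open set is open. Hence the inclusion $\K\X(\A)\subseteq\mc{T}(\A)$ is an equality, and this equality is automatically a lattice equality since joins and meets in both are just unions and intersections of open sets. Then I would invoke Remark \ref{sigma is isom}, which asserts that $\sigma\colon\id(\A)\to\mc{T}(\A)$, $I\mapsto\sigma(I)$, is a lattice isomorphism. Composing these two identifications gives the desired isomorphism $\K\X(\A)=\mc{T}(\A)\cong\id(\A)$.

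There is essentially no obstacle here beyond citing the standard Noetherian-space fact; the content of the lemma is really just the observation that under the Noetherian hypothesis, the compact-open lattice coincides with the full open-set lattice, and the latter is already known to encode $\id(\A)$ via $\sigma$. The only thing worth being careful about is making sure the isomorphism is stated at the level of lattices (with $\cap$ and $\cup$ of opens corresponding to $\cap$ and $\join$ of ideals under $\sigma$), which is exactly what Remark \ref{sigma is isom} provides.
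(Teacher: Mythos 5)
Your argument is correct and matches the paper's proof exactly: both use the Noetherian hypothesis to conclude $\K\X(\A)=\mc{T}(\A)$ and then invoke Remark \ref{sigma is isom} for the isomorphism $\mc{T}(\A)\cong\id(\A)$. Nothing further is needed.
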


\begin{proof}  Suppose $\X(\A)$ is Noetherian. Then every open set is compact; that is $\K\X(\A)=\mc{T}(\A)$, but then $\K\X(\A)=\mc{T}(\A)\cong \id(\A)$ by Remark \ref{sigma is isom}.
\end{proof}

Thus, under the right circumstances, we can find lattices in the image $\K\X$ by finding lattices that occur as the lattice of ideals of a cBCK-algebra. Of course the assumption that $\X(\A)$ be Noetherian is rather strong, and not all spectra are Noetherian. For example, we saw that $\X\bigl(\bigcupdot_{\lambda\in \Lambda}\C_1\bigr)$ is discrete with cardinality $|\Lambda|$, so if $\Lambda$ is infinite this spectrum is not compact and cannot be Noetherian. Nevertheless, the above lemma is still a useful tool.

\begin{theorem}\label{culmination} The following all lie in the image of $\K\X$:
\begin{enumerate}
\item every distributive lattice $\D$ such that $\text{MI}(\D)\cong T^\partial$, as posets, for some finite rooted tree $T$.
\item every finite chain,
\item any countably infinite chain isomorphic to $(\bb{N}_0^\infty)^\partial$,
\item the underlying lattice of every finite subdirectly irreducible distributive p-algebra,
\item the underlying lattice of every finite Boolean algebra.
\end{enumerate} 
\end{theorem}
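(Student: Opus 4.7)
The proof plan is to treat all five cases uniformly via Lemma~\ref{KX(A) = id(A)}: for each target lattice $\D$ I will exhibit a cBCK-algebra $\A$ whose spectrum is Noetherian and whose ideal lattice is isomorphic to $\D$. Then $\K\X(\A) \cong \id(\A) \cong \D$, placing $\D$ in the image of $\K\X$. In each case the Noetherian assumption is guaranteed because the chosen spectrum is either finite (by Proposition~\ref{these spectra are Noetherian}) or is $\X(\A^{\ch_\infty})$ (by Theorem~\ref{infinite chain noetherian}).

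For part~(1), take the algebra $\A^T$ associated to the given finite rooted tree $T$. Its spectrum is finite, hence Noetherian, so $\K\X(\A^T)\cong \id(\A^T)$. Now $\X(\A^T)$ is exactly the poset of meet-irreducibles of the distributive lattice $\id(\A^T)$ (because the prime ideals coincide with the meet-irreducible ideals by the result of Pa\l asinski cited earlier), and Corollary~\ref{X(A^T) = T^d} identifies this poset with $T^\partial$. By hypothesis $\text{MI}(\D)\cong T^\partial$ as well, and Birkhoff's theorem that a finite distributive lattice is determined (up to isomorphism) by its poset of meet-irreducibles yields $\id(\A^T)\cong \D$. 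Part~(2) is now immediate from~(1): an $n$-element chain has as its poset of meet-irreducibles the $(n-1)$-element chain, which is of the form $T^\partial$ for the chain tree $T = \ch_{n-1}$; equivalently one reads off directly from Example~\ref{chain of length n} that $\id(\A^{\ch_{n-1}})$ is the $n$-element chain.

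For part~(3), take $\A = \A^{\ch_\infty}$. Theorem~\ref{infinite chain noetherian} gives that $\X(\A^{\ch_\infty})$ is Noetherian, so Lemma~\ref{KX(A) = id(A)} applies, and Example~\ref{countable chain} records $\id(\A^{\ch_\infty}) \cong (\bb{N}_0^\infty)^\partial$. For part~(4), invoke the remark before Theorem~\ref{ideals of A^T_n form a p-alg}: by Lakser's theorem, every finite subdirectly irreducible distributive p-algebra is (the underlying lattice of) some $\overline{\bb{B}}_n$. Take $\A = \A^{T_n}$; its spectrum is finite hence Noetherian, and Theorem~\ref{ideals of A^T_n form a p-alg} gives $\id(\A^{T_n})\cong \overline{\bb{B}}_n$, so $\K\X(\A^{T_n})\cong \overline{\bb{B}}_n$. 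For part~(5), take $\A = \bigcupdot_{i=1}^n \C_1$. By Theorem~\ref{X(U) is coproduct} the spectrum is the $n$-point discrete space, which is finite and hence Noetherian, and Example~\ref{noncompact example} already records $\K\X(\A)\cong \bb{B}_n$.

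There is no real obstacle here, just bookkeeping: all the technology (the tree construction, the ideal computations, the coproduct formula for spectra of cBCK-unions, and the Noetherian criterion) has already been set up. The only nontrivial observation is the appeal to Birkhoff's representation in part~(1) to pass from agreement of the posets of meet-irreducibles to an isomorphism of the full lattices; parts~(2)--(5) then reduce to direct citation of earlier results with the appropriate witnessing algebra.
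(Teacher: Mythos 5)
Your proposal is correct and follows essentially the same route as the paper: the same witnessing algebras ($\A^T$ for (1)--(2), $\A^{\ch_\infty}$ for (3), $\A^{T_n}$ for (4), $\bigcupdot_{i=1}^n\C_1$ for (5)), the same reduction via Lemma~\ref{KX(A) = id(A)} and the Noetherian observations, and the same appeal to Birkhoff's theorem in part~(1). No substantive differences.
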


\begin{proof}\hfill

(1) This proof follows the same strategy as the proof of Theorem \ref{ideals of A^T_n form a p-alg}. Let $\D$ be such a distributive lattice. By Birkhoff's theorem, this lattice is unique up to isomorphism. Since $T$ is finite so too is $\X(\A^T)$, and we already know that $\X(\A^T)\cong T^\partial$ as posets by Corollary \ref{X(A^T) = T^d} so we have $\text{MI}(\D)\cong T^\partial\cong \X(\A^T)$. But we know that $\X(\A^T)=\text{MI}\bigl(\id(\A^T)\bigr)$, and we know $\id(\A^T)$ is a distributive lattice. Hence, $\D\cong \id(\A^T)$ as lattices by the uniqueness of $\D$. 

Further, because $\X(\A^T)$ is finite it is Noetherian as a topological space. Applying Lemma \ref{KX(A) = id(A)} gives $\D\cong \id(\A^T)\cong \K\X(\A^T)$.

(2) This is a special case of (1) with $\D\cong \n$, the $n$-element chain. In this case, $\text{MI}(\n)$ is a chain with $n-1$ elements, which we view as a rooted tree. We therefore have $\n$ in the image of $\K\X$. This could also be seen using Example \ref{chain of length n} together with the fact that $\X(\A^{\ch_{n-1}})$ is Noetherian.

(3) Theorem \ref{infinite chain noetherian} shows $\X(\A^{\ch_\infty})$ is Noetherian, and therefore $\K\X(\A^{\ch_\infty})\cong\id(\A^{\ch_\infty})\cong (\bb{N}_0^\infty)^\partial$, where the last isomorphism was shown in Example \ref{countable chain}.

(4) Combine Theorem \ref{ideals of A^T_n form a p-alg}, Proposition \ref{these spectra are Noetherian}, and Lemma \ref{KX(A) = id(A)}.

(5) Combine Proposition \ref{these spectra are Noetherian} and Lemma \ref{KX(A) = id(A)}.
\end{proof}

\begin{remark} We note that the process used in Theorem \ref{culmination}(1) will not yield every finite distributive lattice. Consider the free bounded distributive lattice on two generators $\F_2$ shown in Figure \ref{fig:F2}, where $\text{MI}(\F_2)$ is indicated in red. Since the poset of meet-irreducibles is not connected, it does not form a tree. Therefore $\F_2$ cannot be obtained as $\id(\A^T)$ for any finite tree $T$.
\begin{figure}[h]
\centering
\begin{tikzpicture}

\filldraw[red] (0,0) circle (2pt);
\filldraw (0,1) circle (2pt);
\filldraw[red] (-1,2) circle (2pt);
\filldraw[red] (1,2) circle (2pt);
\filldraw[red] (0,3) circle (2pt);
\filldraw (0,4) circle (2pt);

\draw [-] (0,0) -- (0,1);
\draw [-] (0,1) -- (1,2); 
\draw [-] (0,1) -- (-1,2);
\draw [-,red] (1,2) -- (0,3);
\draw [-,red] (-1,2) -- (0,3);
\draw [-] (0,3) -- (0,4);

	\node at (0,-.3) {\small $0$};
	\node at (.6, .9) {\small $x\meet y$};
	\node at (-1.3, 2) {\small $x$};
	\node at (1.3, 2) {\small $y$};	
	\node at (.6, 3.1) {\small $x\join y$};
	\node at (0, 4.3) {\small $1$};
		
\end{tikzpicture}
\caption{}\label{fig:F2}
\end{figure} 
\end{remark}


\subsection{Disjoint union in $\mt{GSpec}$}\label{disjoint union in gspec}

Recall that the functor $\K\colon \mt{GSpec}\to\mt{DL}_0$ provides a dual equivalence. From this it follows that $\K$ sends coproducts to products and vice versa. Given a family of generalized spectral spaces $\{X_\lambda\}_{\lambda\in\Lambda}$, the disjoint union $\bigsqcup_{\lambda\in\Lambda}X_\lambda$ with the disjoint union topology is the coproduct in the category $\mt{Top}$ of all topological spaces. Unfortunately it may not be the coproduct in $\mt{GSpec}$, as the next example shows.

\begin{example} Suppose $\Lambda$ is an infinite indexing set, each $X_\lambda=\{\ast\}$, the one-point space, and put $Z=\{\ast\}$ as well. The one-point space is a spectral space while the disjoint union $\mc{X}:=\bigsqcup_{\lambda\in\Lambda}X_\lambda$ is a generalized spectral space, see Theorem \ref{disjoint union}. For each $\lambda$ we have a unique spectral map $f_\lambda: X_\lambda\to Z$, and the inclusion maps $\text{incl}_\lambda: X_\lambda\to\mc{X}$ are spectral maps as well. Now consider the following diagram:
\begin{center}
\begin{tikzcd}
                                                             &  & Z                                                                                    \\
                                                             &  &                                                                                      \\
X_\lambda \arrow[rr, "\text{incl}_\lambda"'] \arrow[rruu, "f_\lambda"] &  & \mc{X} \arrow[uu, "{\exists !\, f\, ?}"', dotted]
\end{tikzcd}
\end{center}
There is exactly one set map $f$ which makes this diagram commute for all $\lambda$, which is $f(x)=\ast$ for all $x\in\mc{X}$. But this map is not a spectral map since $f^{-1}(\{\ast\})=\mc{X}$, which is not compact since $\Lambda$ is infinite. That is, $f$ is not a morphism in $\mt{GSpec}$. In fact, $\hom_{\mt{GSpec}}(\mc{X}, Z)$ is empty! Hence, $\mc{X}$ is not the coproduct of the $X_\lambda$'s in $\mt{GSpec}$.
\end{example}

However, we will see that for \textit{finite} families in $\mt{GSpec}$, the disjoint union is indeed the coproduct. To do this, we first need to know that the disjoint union of generalized spectral spaces is again generalized spectral. We break the proof into some smaller lemmas.

Let $\{X_\lambda\}_{\lambda\in\Lambda}$ be a family of generalized spectral spaces and put $\mc{X}=\bigsqcup_{\lambda\in\Lambda}X_\lambda$, endowed with the disjoint union topology. Open sets in $\mc{X}$ are of the form $\bigsqcup_{\lambda\in\Lambda}U_\lambda$, where $U_\lambda$ is open in $X_\lambda$. We collect here several useful observations; we refer the reader to the text \cite{DST2019} by Dickmann, Schwartz, and Tressl. 
\begin{enumerate}
\item For each $\lambda$, any open set of $X_\lambda$ is open in $\mc{X}$.

\item A subset $C\subseteq\mc{X}$ is closed in $\mc{X}$ if and only if $C=\bigsqcup_{\lambda\in\Lambda} C_\lambda$, where each $C_\lambda$ is closed in $X_\lambda$. Consequently, for each $\lambda$, any closed subset of $X_\lambda$ is closed in $\mc{X}$.

\item A non-empty subset $C\subseteq\mc{X}$ is irreducible in $\mc{X}$ if and only if there is some index $\lambda\in\Lambda$ such that $C\subseteq X_\lambda$ and $C$ is irreducible in $X_\lambda$.

\item  The compact open subsets of $\mc{X}$ are of the form $\bigsqcup_{\lambda\in F} V_{\lambda}$, where $V_{\lambda}\in\K(X_{\lambda})$ and $F$ is a finite subset of $\Lambda$. Consequently, for each $\lambda$, any compact open subset of $X_\lambda$ is also compact open in $\mc{X}$.

\item A disjoint union of $T_0$ spaces is $T_0$.

\end{enumerate}

\begin{lemma}\label{basis} The compact open subsets of $\mc{X}$ are a multiplicative basis.
\end{lemma}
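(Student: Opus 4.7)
The plan is to use the structural observation (4) listed just before the lemma, which completely describes the compact open subsets of $\mc{X}$ as finite disjoint unions $\bigsqcup_{\lambda \in F} V_\lambda$ with $F \subseteq \Lambda$ finite and $V_\lambda \in \K(X_\lambda)$. Given this, the lemma reduces to two short verifications: that $\K(\mc{X})$ is a basis, and that it is closed under finite (equivalently, binary) intersection.

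For the basis property, I would take an arbitrary open set $U \subseteq \mc{X}$ and a point $x \in U$. By the definition of the disjoint union topology, $U = \bigsqcup_{\lambda \in \Lambda} U_\lambda$ with each $U_\lambda$ open in $X_\lambda$, and $x \in X_\mu$ for a unique index $\mu$, with $x \in U_\mu$. Since each $X_\lambda$ is generalized spectral, $\K(X_\mu)$ is a basis for $X_\mu$, so there exists $V_\mu \in \K(X_\mu)$ with $x \in V_\mu \subseteq U_\mu$. By observation (4) (applied with $F = \{\mu\}$), $V_\mu$ is itself compact open in $\mc{X}$, and clearly $x \in V_\mu \subseteq U$.

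For closure under binary intersection, I would take $W_1, W_2 \in \K(\mc{X})$ and write $W_1 = \bigsqcup_{\lambda \in F_1} V_\lambda^{(1)}$ and $W_2 = \bigsqcup_{\lambda \in F_2} V_\lambda^{(2)}$ as in observation (4). Since the $X_\lambda$ are pairwise disjoint in $\mc{X}$, the intersection computes componentwise:
\[
W_1 \cap W_2 \;=\; \bigsqcup_{\lambda \in F_1 \cap F_2} \bigl(V_\lambda^{(1)} \cap V_\lambda^{(2)}\bigr).
\]
Each $V_\lambda^{(1)} \cap V_\lambda^{(2)}$ lies in $\K(X_\lambda)$ because $\K(X_\lambda)$ is a multiplicative basis for the generalized spectral space $X_\lambda$, and $F_1 \cap F_2$ is finite, so by observation (4) again $W_1 \cap W_2 \in \K(\mc{X})$. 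Neither step is genuinely hard; the only mild subtlety is the appeal to observation (4), but that is cited from \cite{DST2019} and does the heavy lifting.
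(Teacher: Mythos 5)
Your proof is correct and follows essentially the same route as the paper: both arguments rest on observation (4) to describe $\K(\mc{X})$, verify the basis property by refining each open set componentwise using the bases $\K(X_\lambda)$, and check closure under intersection by computing $W_1\cap W_2$ componentwise over the finite index set $F_1\cap F_2$. The only cosmetic difference is that you phrase the basis step pointwise while the paper writes each $U_\lambda$ directly as a union of compact opens; these are interchangeable.
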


\begin{proof} Let $U=\bigsqcup_{\lambda\in\Lambda}U_\lambda$ be open in $\mc{X}$. Since $\K(X_\lambda)$ is basis for $X_\lambda$, each $U_\lambda$ can be written as a union of elements in $\K(X_\lambda)$, which are elements of $\K(\mc{X})$ by observation (1). Thus we can write $U$ as a union of elements in $\K(\mc{X})$, meaning $\K(\mc{X})$ is a basis for the disjoint union topology.

Now take $U,V\in\K(\mc{X})$. By observation (4) we can write $U=\bigsqcup_{\lambda\in F}U_\lambda$ and $V=\bigsqcup_{\mu\in G}V_\mu$ where $F$ and $G$ are finite subsets of $\Lambda$, each $U_\lambda$ is compact open in $X_\lambda$, and each $V_\mu$ is compact open in $X_\mu$. If $\lambda\neq \mu$, then $U_\lambda\cap V_\mu=\emptyset$, and so
\begin{align*}
U\cap V
=\Bigl(\bigsqcup_{\lambda\in F}U_\lambda\Bigr)\cap\Bigl(\bigsqcup_{\mu\in G}V_\mu\Bigr)
&=\bigsqcup_{\lambda\in F}\bigsqcup_{\mu\in G}(U_\lambda\cap V_\mu)\\
&=\bigsqcup_{\alpha\in F\cap G}(U_\alpha\cap V_\alpha)\,.
\end{align*} For each $\alpha\in F\cap G$ we know $U_\alpha\cap V_\alpha\in \K(X_\alpha)$ since $\K(X_\alpha)$ is a multiplicative basis, which further means $U_\alpha\cap V_\alpha\in\K(\mc{X})$. Lastly we note that $F\cap G$ is a finite subset of $\Lambda$, and we have $U\cap V\in \K(\mc{X})$.
\end{proof}

\begin{lemma}\label{union is sober} The space $\mc{X}$ is quasi-sober.
\end{lemma}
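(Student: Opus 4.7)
The plan is to feed the listed observations directly into the definition of quasi-sober. Start with an arbitrary non-empty irreducible closed subset $C \subseteq \mc{X}$; the goal is to exhibit a point $x \in \mc{X}$ with $C = \overline{\{x\}}^{\mc{X}}$.

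By observation (3), irreducibility forces $C \subseteq X_\mu$ for some index $\mu \in \Lambda$, and $C$ is irreducible inside $X_\mu$. Moreover, since $C$ is closed in $\mc{X}$, observation (2) applied to the decomposition $C = \bigsqcup_{\lambda\in\Lambda} (C \cap X_\lambda) = C$ shows $C$ is closed in $X_\mu$ as well (alternatively: $X_\mu$ is clopen in $\mc{X}$, so $C \cap X_\mu = C$ is closed in $X_\mu$). Because $X_\mu$ is a generalized spectral space, it satisfies (H4), and so there exists $x \in X_\mu \subseteq \mc{X}$ such that $C = \overline{\{x\}}^{X_\mu}$.

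It remains to check that the closure of $\{x\}$ computed in $X_\mu$ agrees with the closure computed in $\mc{X}$. For the inclusion $\overline{\{x\}}^{X_\mu} \subseteq \overline{\{x\}}^{\mc{X}}$, note that every closed set in $\mc{X}$ containing $x$ meets $X_\mu$ in a closed subset of $X_\mu$ containing $x$, hence contains $\overline{\{x\}}^{X_\mu}$. For the reverse inclusion, observation (2) shows that $\overline{\{x\}}^{X_\mu}$, being closed in $X_\mu$ (with empty $\lambda$-component for $\lambda \neq \mu$), is itself closed in $\mc{X}$ and contains $x$, so it contains $\overline{\{x\}}^{\mc{X}}$. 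Therefore $C = \overline{\{x\}}^{\mc{X}}$, and $\mc{X}$ is quasi-sober.

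The argument is essentially bookkeeping: the substantive content lives in observations (2) and (3), which were already stated. The only subtlety to watch is distinguishing the ambient closure from the subspace closure, and this is handled cleanly by the fact that each $X_\mu$ is clopen in $\mc{X}$ — so there is no real obstacle, only care with notation.
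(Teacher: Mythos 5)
Your proposal is correct and is exactly the argument the paper intends: its one-line proof cites observation (3) together with the fact that each $X_\lambda$ is closed in $\mc{X}$, and your write-up simply fills in the details of that same route (reduce to a single clopen summand $X_\mu$, invoke quasi-soberness there, and note that subspace closure agrees with ambient closure because $X_\mu$ is closed).
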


\begin{proof} This follows from observation (3) and the fact that each $X_\lambda$ is closed in $\mc{X}$.
\end{proof}

\begin{theorem}\label{disjoint union} The disjoint union $\mc{X}=\bigsqcup_{\lambda\in\Lambda}X_\lambda$ of a family of generalized spectral spaces with the disjoint union topology is also a generalized spectral space.
\end{theorem}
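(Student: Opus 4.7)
The plan is to verify each of the three axioms (H2), (H3), (H4) for the disjoint union $\mc{X}$, and nearly all of the work has already been done in the preceding observations and lemmas, so the proof amounts to citing them and making sure nothing is missed.

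First I would observe that (H2), that $\mc{X}$ is $T_0$, is exactly observation (5): a disjoint union of $T_0$ spaces is $T_0$, and each $X_\lambda$ is $T_0$ by hypothesis. This is immediate because two distinct points either lie in the same summand $X_\lambda$ (in which case they are separated by an open set in $X_\lambda$, which is open in $\mc{X}$ by observation (1)) or in different summands $X_\lambda$ and $X_\mu$ (in which case the clopen sets $X_\lambda$ and $X_\mu$ separate them).

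Next, (H3) is precisely the content of Lemma \ref{basis}: the compact open subsets of $\mc{X}$ form a multiplicative basis. Finally, (H4), that $\mc{X}$ is quasi-sober, is precisely Lemma \ref{union is sober}.

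Since $\mc{X}$ satisfies (H2), (H3), and (H4), it is by definition a generalized spectral space. I do not anticipate any obstacle here, as this theorem is really a packaging result: the substantive work was done in Lemmas \ref{basis} and \ref{union is sober}, and the rest follows from observations (1) and (5) recorded immediately before the lemmas.
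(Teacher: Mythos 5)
Your proposal is correct and follows exactly the same route as the paper, which proves the theorem by combining observation (5) for (H2), Lemma \ref{basis} for (H3), and Lemma \ref{union is sober} for (H4). Your added justification of the $T_0$ property is a harmless elaboration of what the paper leaves to the cited observation.
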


\begin{proof} Combine observation (5), Lemma \ref{basis}, and Lemma \ref{union is sober}.
\end{proof}

\begin{theorem}\label{finite coproduct in gspec} Let $\{X_j\}_{j=1}^n$ be a finite family of generalized spectral spaces. The coproduct of this family in $\mt{GSpec}$ is the disjoint union $\bigsqcup_{j=1}^nX_j$.
\end{theorem}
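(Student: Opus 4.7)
The plan is to verify the universal property of the coproduct directly in $\mt{GSpec}$, with the inclusion maps $\text{incl}_j\colon X_j\hookrightarrow\mc{X}$ as the structure maps. By Theorem~\ref{disjoint union} we already know $\mc{X}=\bigsqcup_{j=1}^n X_j$ is an object of $\mt{GSpec}$, so I only need to check that the $\text{incl}_j$ are spectral and that every cocone $\{f_j\colon X_j\to Z\}_{j=1}^n$ over $Z\in\mt{GSpec}$ factors uniquely through a spectral map $f\colon\mc{X}\to Z$.

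First I would dispatch the inclusions. Given a compact open $U\in\K(\mc{X})$, observation~(4) preceding Lemma~\ref{basis} writes $U=\bigsqcup_{\lambda\in F}V_\lambda$ with $F\subseteq\{1,\dots,n\}$ finite and each $V_\lambda\in\K(X_\lambda)$. Then $\text{incl}_j^{-1}(U)=U\cap X_j$ is either empty or equal to $V_j$, and in either case it belongs to $\K(X_j)$, so $\text{incl}_j$ is a spectral map.

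Next, for the universal property, define $f\colon\mc{X}\to Z$ by $f(x)=f_j(x)$ whenever $x\in X_j$. This is forced on the underlying set by the commutativity condition $f\circ\text{incl}_j=f_j$, so both existence and uniqueness at the set level are automatic. It remains to verify that $f$ is spectral. For any $W\in\K(Z)$ we have
\[
f^{-1}(W)=\bigsqcup_{j=1}^n f_j^{-1}(W),
\]
and each $f_j^{-1}(W)\in\K(X_j)$ since each $f_j$ is spectral. Because the index set $\{1,\dots,n\}$ is finite, observation~(4) again applies and tells us that this finite disjoint union of compact opens is compact open in $\mc{X}$.

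The main point to highlight is that finiteness of the family is used essentially exactly once, at the last step, to invoke observation~(4) in the forward direction; this is precisely the hypothesis whose failure broke coproducts in the earlier example with infinite $\Lambda$, so the argument draws a sharp line between the finite and infinite cases without requiring any further subtlety. Everything else is formal manipulation of the disjoint union topology and the definition of a spectral map, so no genuine obstacle arises beyond keeping track of this finiteness.
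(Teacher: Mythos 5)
Your proof is correct and follows essentially the same route as the paper's: define $f$ piecewise by $f(x)=f_j(x)$ for $x\in X_j$, observe uniqueness is forced at the level of sets, and verify spectrality via $f^{-1}(W)=\bigsqcup_{j=1}^n f_j^{-1}(W)$ together with observation (4). The only difference is that you also explicitly check the inclusions $\text{incl}_j$ are spectral, which the paper asserts without proof in the preceding example; this is a harmless (and welcome) addition.
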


\begin{proof} Suppose we have a generalized spectral space $Z$ equipped with spectral maps $f_j:X_j\to Z$ for $j=1,\ldots, n$. Consider the diagram
\begin{center}
\begin{tikzcd}
                                                             &  & Z                                                                                    \\
                                                             &  &                                                                                      \\
X_j \arrow[rr, "\text{incl}_j"'] \arrow[rruu, "f_j"] &  & \bigsqcup_{j=1}^nX_j \arrow[uu, "{\exists !\, f\, ?}"', dotted]
\end{tikzcd}
\end{center}
For $x\in \bigsqcup_{j=1}^nX_j$, we must have $x\in X_j$ for some index $j$, and so we define $f(x):=f_j(x)$. This is the unique map making the above diagram commute. We show $f$ is a spectral map. 

Let $V$ be a compact open subset of $Z$. A computation gives $f^{-1}(V)=\bigsqcup_{j=1}^n f_j^{-1}(V)$. Since each $f_j$ is a spectral map, each preimage $f_j^{-1}(V)$ is compact open in $X_j$. Thus, applying observation (4) we see that $f^{-1}(V)$ is compact open in $\bigsqcup_{j=1}^nX_j$. So $f$ is a spectral map.
\end{proof}

\subsection{$\K\X$ obtains certain products}

Combining results from previous sections, we can now state the following.

\begin{lemma}\label{products} For a finite family $\{\A_j\}_{j=1}^n$ of cBCK-algebras we have lattice-isomorphisms
\[\K\X\Bigl(\bigcupdot_{j=1}^n\A_j\Bigr)\cong \K\Bigl(\bigsqcup_{j=1}^n\X(\A_j)\Bigr)\cong \prod_{j=1}^n \K\X(\A_j)\,.\]
\end{lemma}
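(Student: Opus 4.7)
The plan is to prove the two isomorphisms separately, each of which is almost immediate from results already established.

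For the first isomorphism, I would invoke Theorem \ref{X(U) is coproduct} directly: taking $\Lambda = \{1, 2, \ldots, n\}$ we get a homeomorphism $\X\bigl(\bigcupdot_{j=1}^n\A_j\bigr)\simeq \bigsqcup_{j=1}^n\X(\A_j)$. Since homeomorphic spaces have isomorphic lattices of compact open subsets (the homeomorphism restricts to a bijection between the lattices of open sets, which preserves unions, intersections, and compactness), we obtain
\[\K\X\Bigl(\bigcupdot_{j=1}^n\A_j\Bigr)\cong \K\Bigl(\bigsqcup_{j=1}^n\X(\A_j)\Bigr)\]
as lattices.

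For the second isomorphism, there are two natural routes. The categorical route is to observe that $\K\colon\mt{GSpec}\to\mt{DL}_0$ is part of a dual equivalence (as noted in the introduction to section 5), hence it sends coproducts in $\mt{GSpec}$ to products in $\mt{DL}_0$. By Theorem \ref{finite coproduct in gspec}, the disjoint union $\bigsqcup_{j=1}^n\X(\A_j)$ \emph{is} the coproduct of the $\X(\A_j)$ in $\mt{GSpec}$, so $\K\bigl(\bigsqcup_{j=1}^n\X(\A_j)\bigr)\cong \prod_{j=1}^n\K\X(\A_j)$.

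If one prefers a hands-on verification, the concrete route is to define a map
\[\Psi\colon\prod_{j=1}^n\K\X(\A_j)\to\K\Bigl(\bigsqcup_{j=1}^n\X(\A_j)\Bigr)\]
by $\Psi(V_1,\ldots,V_n)=\bigsqcup_{j=1}^n V_j$, and verify it is a lattice isomorphism. By observation (4) preceding Lemma \ref{basis}, every compact open subset of $\bigsqcup_{j=1}^n\X(\A_j)$ has the form $\bigsqcup_{j\in F}V_j$ for some finite $F\subseteq\{1,\ldots,n\}$ with $V_j\in\K\X(\A_j)$; since our index set is already finite, we may extend this by taking $V_j=\emptyset$ for $j\notin F$, giving surjectivity. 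Injectivity is clear, and since unions and intersections in the disjoint union topology are computed componentwise (because $X_j\cap X_k=\emptyset$ when $j\neq k$), $\Psi$ is a lattice homomorphism.

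I expect no serious obstacle here; both halves of the lemma are ``packaging'' results. The only thing worth double-checking is that the finiteness hypothesis is genuinely used in the second isomorphism: in the infinite case, the product $\prod_{\lambda\in\Lambda}\K\X(\A_\lambda)$ contains tuples with infinitely many non-empty entries, which do not correspond to compact open subsets of the disjoint union, so the natural map is no longer surjective. This is why the lemma is stated for finite families.
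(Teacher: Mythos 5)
Your proposal is correct and follows exactly the paper's argument: the first isomorphism is obtained from Theorem \ref{X(U) is coproduct}, and the second from the fact that $\K$ carries coproducts in $\mt{GSpec}$ to products in $\mt{DL}_0$ together with Theorem \ref{finite coproduct in gspec}. The additional hands-on verification and the remark on where finiteness is needed are sound but not required.
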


\begin{proof} The first isomorphism follows from Theorem \ref{X(U) is coproduct}. The second follows from the fact that $\K$ sends coproducts in $\mt{GSpec}$ to products in $\mt{DL}_0$, together with Theorem \ref{finite coproduct in gspec}.
\end{proof}

\begin{corollary}\label{KX yields products} Let $\{\D_j\}_{j=1}^n$ be a finite family of distributive lattices such that, for each $j\in\{1,\ldots, n\}$, we have $\D_j\cong \K\X(\A_j)$ for some cBCK-algebra $\A_j$. Then the lattice $\P=\prod_{j=1}^n\D_j$ is in the image of $\K\X$.
\end{corollary}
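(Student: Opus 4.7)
The plan is to realize $\P$ as $\K\X(\U)$ where $\U$ is the cBCK-union of the given algebras $\A_j$. Concretely, I would set $\U := \bigcupdot_{j=1}^n \A_j$, which is a cBCK-algebra by the construction recalled in subsection \ref{unions}, and then show that $\K\X(\U)$ is lattice-isomorphic to $\P$.

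The heavy lifting is already done by Lemma \ref{products}, which gives the chain of lattice-iso\-morph\-isms
\[\K\X(\U) = \K\X\Bigl(\bigcupdot_{j=1}^n \A_j\Bigr) \cong \K\Bigl(\bigsqcup_{j=1}^n \X(\A_j)\Bigr) \cong \prod_{j=1}^n \K\X(\A_j).\]
Substituting the hypothesized isomorphisms $\D_j \cong \K\X(\A_j)$ into the final product and composing with the isomorphism above yields
\[\prod_{j=1}^n \D_j \;\cong\; \prod_{j=1}^n \K\X(\A_j) \;\cong\; \K\X(\U),\]
which exhibits $\P$ as $\K\X(\U)$ up to isomorphism, placing it in the image of the composite functor $\K\X$.

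Since Lemma \ref{products} already packages the two nontrivial steps — that $\X$ turns cBCK-unions into disjoint unions of spectra (Theorem \ref{X(U) is coproduct}), and that $\K$ sends finite coproducts in $\mt{GSpec}$ to finite products in $\mt{DL}_0$ (Theorem \ref{finite coproduct in gspec} combined with the duality) — there is essentially no obstacle here: the argument is a two-line substitution. The only thing to double-check is that the product of lattices taken on the algebraic side of $\mt{DL}_0$ agrees, as a lattice, with the product we built on the right-hand side of Lemma \ref{products}, which is immediate because the product functor is the same regardless of whether one views the $\D_j$'s as abstract distributive lattices or as the specific lattices $\K\X(\A_j)$.
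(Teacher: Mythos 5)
Your proposal is correct and is essentially identical to the paper's own proof: both define $\U=\bigcupdot_{j=1}^n\A_j$ and apply Lemma \ref{products} together with the hypothesized isomorphisms $\D_j\cong\K\X(\A_j)$ to conclude $\K\X(\U)\cong\P$. Nothing further is needed.
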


\begin{proof} Let $\U=\bigcupdot_{j=1}^n\A_j$. Then by Lemma \ref{products} we have \[\K\X(\U)\cong \prod_{j=1}^n\K\X(\A_j)\cong \prod_{j=1}^n \D_j\cong \P\,.\]
\end{proof}

\begin{corollary}\label{specific products} Let $\{\D_j\}_{j=1}^n$ be a collection of distributive lattices such that each $\D_j$ is one of the five types from Theorem \ref{culmination}, and let $\P=\prod_{j=1}^n\D_j$. Then $\P$ is in the image of $\K\X$.
\end{corollary}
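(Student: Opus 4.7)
The proof is essentially a two-line application of the preceding results, so the plan is really just to indicate which hypotheses are verified and how they combine.

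First I would observe that Theorem \ref{culmination} exhibits, for each of the five listed types of distributive lattice, an explicit cBCK-algebra whose $\K\X$ realizes that lattice. Concretely, for each $j\in\{1,\ldots,n\}$ the lattice $\D_j$ is one of those five types, so I would pick a witnessing cBCK-algebra $\A_j$ with $\D_j\cong \K\X(\A_j)$: namely $\A^T$ for a suitable finite rooted tree $T$ in case (1) (and its specializations in cases (2), (4), (5)), $\A^{\ch_\infty}$ in case (3), and the disjoint union $\bigcupdot_{i=1}^{n}\C_1$ in case (5).

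Having chosen the family $\{\A_j\}_{j=1}^n$, the proof is then an immediate appeal to Corollary \ref{KX yields products}: the finite cBCK-union $\U=\bigcupdot_{j=1}^n\A_j$ satisfies
\[
\K\X(\U)\;\cong\;\prod_{j=1}^n \K\X(\A_j)\;\cong\;\prod_{j=1}^n \D_j\;=\;\P,
\]
so $\P$ lies in the image of $\K\X$.

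There is no real obstacle here; the substantive work was already done in Lemma \ref{products} (which invokes Theorem \ref{X(U) is coproduct} and Theorem \ref{finite coproduct in gspec} to identify $\K\X$ of a finite cBCK-union with the product of the $\K\X(\A_j)$'s), and in Theorem \ref{culmination} itself. The only thing worth being careful about is that the hypotheses of Corollary \ref{KX yields products} require a \emph{finite} family, which is guaranteed by the statement, and that each $\D_j$ genuinely lies in the image of $\K\X$ as a single factor, which is exactly the content of Theorem \ref{culmination}. Thus the corollary follows with no additional argument.
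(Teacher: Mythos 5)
Your proof is correct and is exactly the argument the paper intends: the corollary is stated without proof precisely because it is the immediate combination of Theorem \ref{culmination} (supplying a witnessing cBCK-algebra $\A_j$ with $\D_j\cong\K\X(\A_j)$ for each factor) and Corollary \ref{KX yields products} applied to the finite family $\{\A_j\}_{j=1}^n$. No further comment is needed.
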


For example, pick $n\in\bb{N}$ and let $\D(n)$ be the divisor lattice of $n$. Since $\D(n)$ is a finite product of finite chains it is in the image of $\K\X$.

\end{document}